\documentclass[oneside]{amsart}
\usepackage[utf8]{inputenc}
\usepackage{amsmath,amstext,amsopn,amssymb, amsfonts, amsthm, mathtools}
\usepackage[dvipsnames]{xcolor}
\usepackage[marginparwidth=2.5cm]{geometry}
\usepackage{float}
\usepackage{bbm}
\usepackage{pdfsync}
\usepackage{tikz}
\usepackage{marginnote}
\usepackage{enumitem}
\usepackage{subfig}
\usepackage{caption}
\usepackage{graphicx}
\usepackage{mathrsfs}
\usepackage[ruled,vlined,algosection]{algorithm2e}
\DeclareMathAlphabet{\mathpzc}{OT1}{pzc}{m}{it}

\usepackage[pdftex,bookmarks,colorlinks,breaklinks]{hyperref}
\definecolor{dullmagenta}{rgb}{0.4,0,0.4}   
\definecolor{darkblue}{rgb}{0,0,0.4}
\definecolor{darkgreen}{rgb}{0,0.4,0}

\definecolor{cbblue}{RGB}{0,114,178}
\definecolor{cborange}{RGB}{230,159,0}
\definecolor{cbgreen}{RGB}{0,158,115}
\definecolor{cbvermillion}{RGB}{213,94,0}
\definecolor{cbsky}{RGB}{86,180,233}

\hypersetup{
  linkcolor=darkblue,
  citecolor=blue,
  filecolor=dullmagenta,
  urlcolor=darkblue
}

\newtheorem{TheoremLetter}{Theorem}
{}

\newtheorem*{definition*}{Definition}

\newtheorem{theorem}{Theorem}
\newtheorem*{theorem*}{Theorem}

\newtheorem{conjecture}{Conjecture}
\newtheorem*{conjecture*}{Conjecture}

\newtheorem*{question*}{Question}

\newtheorem{lemma}[theorem]{Lemma}
\newtheorem*{lemma*}{Lemma}

\newtheorem{proposition}[theorem]{Proposition}

\newtheorem*{corollary*}{Corollary}

\theoremstyle{definition}

\newtheorem*{remark*}{Remark}

\theoremstyle{plain}

\newtheorem*{example*}{Example}

\numberwithin{equation}{section}
\numberwithin{theorem}{section}

\makeatletter
\newcommand{\customlabel}[2]{
   \protected@write \@auxout {}{
     \string \newlabel {#1}{{#2}{\thepage}{#2}{#1}{}} }
   \hypertarget{#1}{#2}
}
\makeatother

\def\XXint#1#2#3{{\setbox0=\hbox{$#1{#2#3}{\int}$}
     \vcenter{\hbox{$#2#3$}}\kern-.5\wd0}}

\newcommand{\sh}{\operatorname{sh}}

\newcommand{\ind}{\mathbbm{1}}

\newcounter{cmt}
\newcommand{\comment}[2]{%
  \stepcounter{cmt}%
  \ifodd\value{cmt}\normalmarginpar\else\reversemarginpar\fi
  \marginnote{%
    \tikz\node[draw=blue, rounded corners=2pt, inner sep=4pt,
               text width=\dimexpr\marginparwidth-12pt\relax,
               align=left, font=\footnotesize, text=blue]{#2};%
  }[\baselineskip]%
  {\color{red}#1}%
}
\newcommand{\commentnote}[1]{%
  \stepcounter{cmt}%
  \ifodd\value{cmt}\normalmarginpar\else\reversemarginpar\fi
  \marginnote{%
    \tikz\node[draw=blue, rounded corners=2pt, inner sep=4pt,
               text width=\dimexpr\marginparwidth-12pt\relax,
               align=left, font=\footnotesize, text=red]{#1};%
  }%
}

\makeindex
\begin{document}

\title{An antichain approach to a conjecture of Zygmund}

\address{Universidad Autónoma de Madrid}
\author{Guillermo Rey}
\email{guillermo.rey@uam.es}
\subjclass[2020]{42B25, 28A15, 42B35}
\keywords{Strong maximal function, Zygmund's conjecture, sparse, antichain}
\thanks{Research supported in part by grants PID2022-139521NA-I00 and
  RYC2024-051323-I, both funded by MICIU/AEI/10.13039/501100011033.}
\maketitle

\begin{abstract}
  An antichain is a family of rectangles in which no member contains another.
  Given a family $\mathcal{E}$ of rectangles, define
  $h_{\mathcal{E}} = \sum_{R \in \mathcal{E}} \ind_R$.
  We show that there exist constants $c, C > 0$ such that for every sparse
  antichain $\mathcal{E}$ of dyadic rectangles in $\mathbb{R}^2$
  \begin{align*}
    \int_E \exp(c\, h_{\mathcal{E}}) \leq C |E|,
  \end{align*}
  where $E$ is the union of all the rectangles in $\mathcal{E}$.
  For general sparse families without the antichain condition, the estimate
  requires replacing $h_{\mathcal{E}}$ by $h_{\mathcal{E}}^{1/2}$,
  so antichains behave as if they lived in one dimension fewer.

  We give two applications.
  First, the dyadic Zygmund conjecture holds in dimension three: the maximal
  operator associated to dyadic rectangles with sidelengths
  $2^{m_1} \times 2^{m_2} \times 2^{\Phi(m_1, m_2)}$, where $\Phi$ is monotone
  increasing in each variable, is weak-type $L\log L$.
  This recovers a theorem of A. Córdoba.
  Second, the maximal operator of an arbitrary antichain of dyadic rectangles
  in the plane is bounded on $L^p$ with norm $\mathcal{O}(p')$, which is the
  growth of the one-parameter maximal function.
  This bound is sharp, and removing the antichain condition forces a constant
  that grows like $(p')^2$ instead.

  The proofs proceed through bounds on $k$-fold intersections: for sparse
  antichains in the plane, the $k$-wise intersection sums grow at most
  geometrically in $k$, which we prove through an $L^2$ estimate for the Gram
  matrix of the normalized indicators of the family.
  We also show that, in every dimension, the analogous exponential estimate
  for antichains is equivalent to a $k$-wise intersection bound, and implies
  the corresponding case of Zygmund's conjecture.
\end{abstract}

\tableofcontents

\section{Introduction}

Let $\mathcal{E}$ be a collection of axis-parallel rectangles in $\mathbb{R}^d$,
that is: cartesian products of $d$ one-dimensional intervals.
The associated \emph{geometric maximal function} is
\begin{align*}
  \mathcal{M}_{\mathcal{E}}f = \sup_{R \in \mathcal{E}}
    \ind_R \langle |f| \rangle_R,
\end{align*}
where $\langle f \rangle_R$ denotes the average of $f$ over $R$ with respect to
the Lebesgue measure.

When $\mathcal{E} = \mathcal{R}_d$ is the family of \emph{all} axis-parallel
rectangles, $\mathcal{M}_{\mathcal{R}_d}$ is the strong maximal function, whose
behavior near $L^1$ has been understood since Jessen, Marcinkiewicz, and Zygmund
\cite{JMZ1935}: it is weak-type $L(\log L)^{d-1}$.
We say that $\mathcal{M}_{\mathcal{E}}$ is weak-type $\psi(L)$ if
\begin{align*}
  |\{x \in \mathbb{R}^d: \, \mathcal{M}_{\mathcal{E}}f(x) > \lambda \}| \lesssim
    \int_{\mathbb{R}^d} \psi \bigg( \frac{|f(x)|}{\lambda} \bigg) \, dx,
\end{align*}
uniformly over all measurable functions $f$ and all $\lambda > 0$.
With this notation, the strong maximal function in $\mathbb{R}^d$
is weak-type $\psi_{d-1}(L)$, where $\psi_{k}(x) := x\log(e+x)^k$.

In the inequality above, $|\cdot|$ denotes the Lebesgue measure,
and we use the notation $A \lesssim B$ to mean that there exists a constant $C$,
uniform over the relevant parameters, such that $A \leq CB$.

We will only use the Lebesgue measure in this article.
Accordingly, we will often omit the variable of integration, and abbreviate
the measure of a level set $|\{x:\, g(x) > \lambda\}|$ to $|g > \lambda|$;
the inequality above then reads
$|\mathcal{M}_{\mathcal{E}}f > \lambda| \lesssim \int \psi(|f|/\lambda)$.

Zygmund conjectured that families with fewer parameters should behave better,
independent of the ambient dimension:
if the sidelengths of the rectangles in $\mathcal{E}$ are monotone increasing
functions of only $k$ parameters, then the maximal function should behave like
the $k$-dimensional strong maximal function, namely
$\mathcal{M}_{\mathcal{E}}$ should be weak-type $\psi_{k-1}(L)$.

A lot of effort has been put into making this intuition precise.
We mention \cite{
  Zygmund1967,
  Cordoba1979,
  Soria1986,
  FeffermanPipher2005,
  Stokolos2005,
  HS2022,
  HOS2023,
  ADMM2023,
  Gauvan2024}, but this is a non-exhaustive list.

A. Córdoba \cite{Cordoba1979} confirmed the conjecture for the family of
axis-parallel rectangles with sidelengths $s \times t \times \Phi(s,t)$ in
$\mathbb{R}^3$, where $\Phi : \mathbb{R}_{>0}^2 \to \mathbb{R}_{>0}$ is any
function monotone increasing in each variable.

F. Soria \cite{Soria1986} disproved the conjecture in general, constructing
monotone increasing two-parameter functions $\Phi_1$ and $\Phi_2$ such that the
family of rectangles whose sidelengths are of the form
$s \times \Phi_1(s,t) \times \Phi_2(s,t)$ behaves no better than the full
three-parameter family $\mathcal{R}_3$. See \cite{Rey2020} for further examples
in higher dimensions.

The most prominent case left open by the counterexamples, and a natural
generalization of Córdoba's result from \cite{Cordoba1979} is the situation
where, for a single $(d-1)$-parameter monotone increasing function $\Phi$, one
considers the family of axis-parallel rectangles whose sidelengths are of the
form
  $s_1 \times \dots \times s_{d-1} \times \Phi(s_1, \dots, s_{d-1}).$
In this case, Zygmund's conjecture would predict that the associated maximal
function is of weak-type $\psi_{d-2}(L)$.
In this article, we will focus on the dyadic version of this conjecture,
which we state here for reference, though we claim no authorship.
\begin{conjecture} \label{zygmund_conjecture}
  Let $\Phi : \mathbb{Z}^{d-1} \to \mathbb{Z}$ be a monotone increasing function
  in each argument, and let $\mathcal{Z}_d$ be the collection of dyadic
  rectangles in $\mathbb{R}^d$ whose sidelengths are of the form
  \begin{align*}
    2^{m_1} \times \dots \times 2^{m_{d-1}}
      \times 2^{\Phi(m_1, \dots, m_{d-1})}.
  \end{align*}
  Then $\mathcal{M}_{\mathcal{Z}_d}$ is weak-type $\psi_{d-2}(L)$.
\end{conjecture}

In this article we propose an approach to this conjecture through the study of
sparse dyadic antichains, and we carry it out completely in the first
non-trivial case of dimension three, recovering Córdoba's result.

We begin by recalling the notion of \emph{sparse} collection.
\begin{definition*}[Sparse collection]
  A collection $\mathcal{E}$ of measurable sets in $\mathbb{R}^d$ is said to be
  $\eta$-sparse if for every element $R$ in $\mathcal{E}$ there
  exists a choice of pairwise disjoint measurable sets $\{E(R)\}_{R \in
  \mathcal{E}}$ with $E(R) \subseteq R$ and $|E(R)| \geq \eta|R|$.
\end{definition*}
The \emph{shadow} of a family $\mathcal{E}$ is the set of points which
is contained in at least one of its members.
Dual to the shadow is the \emph{height function} of a family $\mathcal{E}$,
which is, at every point $x$, the number of elements in $\mathcal{E}$ that
contain $x$.
\begin{align*}
  \sh(\mathcal{E}) = \bigcup_{R \in \mathcal{E}} R
  \qquad\text{and}\qquad
  h_{\mathcal{E}} = \sum_{R \in \mathcal{E}} \ind_R.
\end{align*}
Finally, we say that $\mathcal{E}$ is an antichain if no element contains another.

We can now state our main result.
As we explain below, Theorem \ref{MT:ExpInt}
implies Conjecture \ref{zygmund_conjecture} in dimension $d=3$.
\begin{TheoremLetter} \label{MT:ExpInt}
  There exist constants $c,C > 0$ such that
  for every $\frac{1}{2}$-sparse antichain $\mathcal{E}$ of dyadic two-dimensional
  rectangles we have
  \begin{align*}
    \int_{\sh(\mathcal{E})} \exp\big(
      c h_{\mathcal{E}}
    \big) \leq C |\sh(\mathcal{E})|.
  \end{align*}
\end{TheoremLetter}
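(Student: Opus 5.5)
The plan follows the abstract's outline: reduce the exponential estimate to a bound on $k$-fold intersection sums, and prove that bound with an $L^2$ Gram-matrix estimate. Expanding $\exp(c h) = \sum_k c^k h^k/k!$ and writing
\begin{align*}
  h_{\mathcal{E}}^k = \sum_{R_1,\dots,R_k \in \mathcal{E}} \ind_{R_1\cap\dots\cap R_k},
\end{align*}
Theorem \ref{MT:ExpInt} follows once we show
\begin{align*}
  \sum_{R_1,\dots,R_k\in\mathcal{E}} |R_1\cap\dots\cap R_k| \leq k!\, A^k\, |\sh(\mathcal{E})|
\end{align*}
for some absolute $A$; then $c<1/A$ makes the series converge. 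It is more convenient to count ordered chains. Since $h^k$ is comparable, up to a factor $k!$, to the sum over $k$-tuples of distinct rectangles (plus lower-order terms handled by induction), it suffices to prove
\begin{align*}
  S_k := \sum_{\{R_1,\dots,R_k\}} |R_1\cap\dots\cap R_k| \leq A^k |\sh(\mathcal{E})|,
\end{align*}
summing over unordered $k$-subsets. Sparseness gives the case $k=1$: $\sum_R |R|\le 2\sum_R|E(R)|\le 2|\sh(\mathcal{E})|$.

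To pass from $S_k$ to $S_{k+1}$, I would order each $k$-subset in a canonical way, say by the first sidelength, which is decreasing. Because the family is an antichain of dyadic rectangles, whenever two members intersect, the one with the longer horizontal side has the shorter vertical side. So any intersecting $k$-subset is totally ordered: widths increase while heights decrease, and the intersection $R_1\cap\dots\cap R_k$ is the rectangle formed by the smallest width and the smallest height. Concretely, it equals $I_{R_{\min w}}\times J_{R_{\min h}}$, the intersection of just the two extreme elements. Hence
\begin{align*}
  S_k = \sum_{P,Q} |P\cap Q|\cdot N_k(P,Q),
\end{align*}
where $N_k(P,Q)$ counts the $(k-2)$-subsets of rectangles that contain $P\cap Q$ and lie strictly between $P$ and $Q$ in the order. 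The key lemma I would aim for is a Gram-matrix bound: the matrix $G_{PQ} = |P\cap Q|/(|P||Q|)^{1/2}$ has $\ell^2$ operator norm $O(1)$ on a sparse antichain. The proof would be a Schur test with weights $|P|^{1/2}$, using the one-dimensional geometric decay. For fixed $P$, the rectangles $Q$ that intersect $P$ and have larger width have heights forming a nested dyadic chain, and sparseness controls their total mass.

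Iterating $G$ then bounds chains: write $\ind_{R_1\cap\dots\cap R_k}\le$ a product along consecutive pairs, and bound the chain sum by $\langle G^{k-1}v,v\rangle$ with $v_R=|R|^{1/2}$. This gives
\begin{align*}
  S_k\le \|G\|^{k-1}\sum_R|R|\lesssim A^k|\sh(\mathcal{E})|.
\end{align*}

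The main obstacle is the step from ``chain intersection'' to ``product of consecutive normalized overlaps.'' One needs $|R_1\cap\dots\cap R_k| \le |R_1|^{1/2}\prod_i G_{R_iR_{i+1}}\,|R_k|^{1/2}$ up to constants, which holds when consecutive rectangles cross in the antichain order. The ratios telescope exactly, because the widths and heights are monotone along the chain. I would verify this telescoping identity first. If it fails, I would instead run an induction on $k$ that removes the element with extreme width and uses sparseness of the subfamily above it.
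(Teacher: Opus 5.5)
Your reduction, the eccentricity ordering with exact telescoping, and the domination by $\langle G^{k-1}v,v\rangle$ all match the paper: Theorem~\ref{equiv:theorem} with $n=1$, and Lemmas~\ref{chain_lemma} and~\ref{bound_by_bilinear}. Your series reduction is fine once made precise, for instance via $\int_{\sh(\mathcal{E})}(1+t)^{h_{\mathcal{E}}}=\sum_{k\geq 0}t^kS_k$ with $S_0=|\sh(\mathcal{E})|$. The telescoping you flag as the main obstacle is routine: $G_{R_iR_{i+1}}$ is the square root of the ratio of the eccentricities $|\pi_1(R)|/|\pi_2(R)|$ of consecutive elements.

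The real content is the bound $\|G\|_{\ell^2\to\ell^2}=O(1)$, and your proposed proof of it fails. A Schur test with weights $w_P=|P|^{1/2}$ requires $\sum_Q G_{PQ}|Q|^{1/2}\leq C|P|^{1/2}$, that is, $\int_P h_{\mathcal{E}}\leq C|P|$ for every $P\in\mathcal{E}$. This is false for sparse antichains. Take $\mathcal{E}_N=\{[0,2^j)\times J:\ J\subseteq[0,1)\text{ dyadic},\ |J|=2^{-j},\ 0\leq j\leq N\}$. All members have area one, so this is an antichain. The sets $[2^{j-1},2^j)\times J$ are pairwise disjoint with half the area of $[0,2^j)\times J$, so the family is $\frac{1}{2}$-sparse. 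Yet every point of $P=[0,1)^2\in\mathcal{E}_N$ lies in exactly one member of each generation $j$, so $h_{\mathcal{E}_N}\equiv N+1$ on $P$, and the Schur ratio at $P$ is $N+1$.

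The same example refutes your supporting claim. The wider rectangles meeting $P$ have heights running over the whole dyadic tree below $[0,1)$, not a nested chain. The chain structure holds only at each fixed point, and sparseness does not control $\sum_Q|P\cap Q|$ over the rectangles crossing $P$. The $\ell^2$ bound itself is true; only this Schur argument breaks.

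The missing idea is to use that $G$ is a Gram matrix. With $e_R=\ind_R/|R|^{1/2}$ one has $G_{RS}=\langle e_R,e_S\rangle$, so $G$ is positive semidefinite and $\langle Gu,u\rangle=\|\sum_R u_Re_R\|_{L^2}^2$. By duality it then suffices to prove the sparse square function bound $\sum_R\langle f\rangle_R^2|R|\lesssim\|f\|_{L^2}^2$. This follows from $|R|\leq 2|E(R)|$, the disjointness of the sets $E(R)$, and the $L^2$ bound for the dyadic strong maximal function, giving $\|G\|\leq 32$ (Proposition~\ref{gram}). This step uses only sparseness; the antichain property enters solely through the telescoping.
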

It is easy to see that one-parameter sparse collections of dyadic squares
satisfy the same inequality.
Thus, at least from the point of view of exponential-integrability,
sparse antichains of two-parameter dyadic rectangles behave as if they were
one-parameter sparse collections.

As another application, we obtain the sharp dependence on $p$ for the
$L^{p}$-operator norm of the maximal function associated to antichains of
dyadic two-dimensional rectangles.
Note that in the following theorem, we do not require that $\mathcal{E}$ be
sparse.
\begin{TheoremLetter} \label{MT:SharpLpForAntichainM}
  There exists a finite positive constant $C$ such that for
  every antichain $\mathcal{E}$ of dyadic two-dimensional rectangles,
  every $1 < p < \infty$,
  and every $f \in L^p(\mathbb{R}^2)$,
  we have
  \begin{align*}
    \|\mathcal{M}_{\mathcal{E}} f\|_{L^{p}(\mathbb{R}^2)} \leq Cp'
    \|f\|_{L^p(\mathbb{R}^2)}.
  \end{align*}
\end{TheoremLetter}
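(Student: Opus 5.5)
We plan to deduce Theorem \ref{MT:SharpLpForAntichainM} from Theorem \ref{MT:ExpInt} through the standard sparse-domination and duality scheme. The $p$-dependence $\mathcal{O}(p')$ matters only as $p \to 1$; for $p \geq 2$ the trivial bound $\mathcal{M}_{\mathcal{E}} \leq \mathcal{M}_{\mathcal{R}_2}$ already gives a constant $O(1)$. By monotone convergence we may assume $\mathcal{E}$ is finite and $f \geq 0$ is bounded with compact support.

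\textbf{Step 1: reduction to sparse subfamilies.} We run the usual stopping-time selection on the level sets $\{\mathcal{M}_{\mathcal{E}} f > 2^k\}$. For each $k$, let $\mathcal{E}_k$ be the subfamily of maximal rectangles (under inclusion) among those $R\in\mathcal{E}$ with $\langle f\rangle_R > 2^k$. Then $\{\mathcal{M}_{\mathcal{E}}f > 2^k\} = \sh(\mathcal{E}_k)$. The family $\mathcal{E}_k$ is not sparse in general, so we use a Córdoba--Fefferman type selection to extract $\widetilde{\mathcal{E}}_k \subseteq \mathcal{E}_k$. Rectangles are added in decreasing order of, say, the first sidelength, and each new rectangle must satisfy $|R \cap \sh(\text{previous})| \leq \frac12 |R|$. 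This makes $\widetilde{\mathcal{E}}_k$ a $\tfrac12$-sparse antichain. The discarded rectangles are contained in $\{\mathcal{M}_{\mathcal{R}_1\text{-dir}}\ind_{\sh(\widetilde{\mathcal{E}}_k)} > 1/2\}$, where the maximal operator is the one-directional maximal function in the coordinate of the ordering. Here I would try to use the antichain/dyadic structure to show that only a one-directional (one-parameter) maximal function is needed. That gives
\[
|\sh(\mathcal{E}_k)| \lesssim |\sh(\widetilde{\mathcal{E}}_k)|,
\]
with no $\log$ loss.

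\textbf{Step 2: a weak-type $L\log L$-free estimate via Theorem \ref{MT:ExpInt}.} For a $\tfrac12$-sparse antichain $\mathcal{S}=\widetilde{\mathcal{E}}_k$ we have
\[
2^k|\sh(\mathcal{S})| \le 2\sum_{R\in\mathcal{S}}\int_R f = 2\int f\, h_{\mathcal{S}}.
\]
We then apply the Young inequality $ab \le e^{ca} + \frac{b}{c}\log(b/\cdot)$, combined with Theorem \ref{MT:ExpInt}. This gives
\[
|\sh(\mathcal{S})| \lesssim \int_{\sh(\mathcal{S})} \frac{f}{2^k}\log\!\Big(e+\frac{f}{2^k}\Big),
\]
which is weak $L\log L$. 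This is slightly too weak, since integrating it in $k$ yields $(p')^2$. To get $p'$, we instead split $f = f\ind_{f>2^{k-1}} + f\ind_{f\le 2^{k-1}}$ and use the exponential integrability more efficiently. On $\sh(\mathcal{S})$, the function $h_{\mathcal{S}}$ has distribution $|\{h_{\mathcal{S}}>j\}| \le Ce^{-cj}|\sh(\mathcal{S})|$. Pairing $f\ind_{f>2^{k-1}}$ with $h_{\mathcal{S}}$ then gives
\[
2^k|\sh(\mathcal{S})| \lesssim \int_{\{f>2^{k-1}\}} f\,\min(h_{\mathcal{S}}, \cdot).
\]

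\textbf{Step 3: summation.} Rather than Step 2's weak-type route, the cleaner path is to linearize, since Theorem \ref{MT:ExpInt} is equivalent to $\|h_{\mathcal{S}}\|_{\exp L(\sh\mathcal{S})}\lesssim 1$. We estimate
\[
\|\mathcal{M}f\|_p^p \lesssim \sum_k 2^{kp}|\sh(\widetilde{\mathcal{E}}_k)|,
\]
and then bound $\sum_k 2^{k(p-1)}\int f h_{\widetilde{\mathcal{E}}_k}$ by interchanging the sum in $k$ with the sparse structure of the whole tower $\bigcup_k\widetilde{\mathcal{E}}_k$, taking $p$-th power averages in the manner of the one-parameter proof. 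The key inequality needed is the dual bound
\[
\Big\|\sum_k 2^{k(p-1)} h_{\widetilde{\mathcal{E}}_k}\Big\|_{L^{p'}} \lesssim p' \,\Big(\sum_k 2^{kp}|\sh \widetilde{\mathcal{E}}_k|\Big)^{1/p'}.
\]
This follows from the exponential integrability, since $\exp L$ functions have $L^{q}$ norms $\lesssim q$ with $q=p'$. This is exactly where the one-dimensional behavior of antichains yields $p'$ rather than $(p')^2$.

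The main obstacle is Step 1: extracting sparse antichains whose shadows control the level sets without the logarithmic loss that the strong maximal function would incur. If the one-directional covering argument fails, the fallback is to choose $\widetilde{\mathcal{E}}_k$ by a greedy selection against the union of the already-selected rectangles across all $k$ simultaneously. The fallback would then control the remainder using the fact that nested chains cannot occur within an antichain.
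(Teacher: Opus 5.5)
Your Step 1 is fine after one correction. If you process rectangles by decreasing first sidelength, a discarded $R$ satisfies $R \cap \sh(\text{previous}) = \pi_1(R) \times G$ with $|G| > \frac12 |\pi_2(R)|$. So the discarded rectangles are controlled by the dyadic maximal function in the \emph{second} variable, not the first.

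The genuine gap is the key inequality of Step 3, which is false, not merely unproved. The single-level bound $\|h_{\widetilde{\mathcal{E}}_k}\|_{L^{p'}} \lesssim p' |\sh(\widetilde{\mathcal{E}}_k)|^{1/p'}$ does not sum correctly: the triangle inequality produces $\sum_k a_k^{1/p'}$, which dominates $(\sum_k a_k)^{1/p'}$. Worse, your own construction produces a configuration where the inequality fails by a factor $p'$.

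Let $\mathcal{E} = \mathcal{S} = \{[0,2^m) \times [0,2^{-m}) : |m| \leq M\}$ with $M \gg p'$, and let $f = \frac{3}{2} 2^N \ind_{\sh(\mathcal{S})}$. Then $\mathcal{E}_k = \mathcal{S}$ for $k \leq N$ and $\mathcal{E}_k = \emptyset$ for $k > N$. Your selection keeps all of $\mathcal{S}$, since each $R_m$ meets the previously processed rectangles in exactly half of itself. Hence $\widetilde{\mathcal{E}}_k = \mathcal{S}$ for all $k \leq N$. Since $|\{h_{\mathcal{S}} \geq j\}| \asymp 2^{-j} |\sh(\mathcal{S})|$ for $1 \leq j \leq M$, we get $\|h_{\mathcal{S}}\|_{L^{p'}} \gtrsim p' |\sh(\mathcal{S})|^{1/p'}$, and therefore
\[
\Big\| \sum_{k} 2^{k(p-1)} h_{\widetilde{\mathcal{E}}_k} \Big\|_{L^{p'}} = \frac{2^{N(p-1)}}{1 - 2^{1-p}} \, \|h_{\mathcal{S}}\|_{L^{p'}} \gtrsim (p')^2 \, 2^{N(p-1)} \, |\sh(\mathcal{S})|^{1/p'},
\]
whereas $p' \big( \sum_{k \leq N} 2^{kp} |\sh(\mathcal{S})| \big)^{1/p'} \asymp p' \, 2^{N(p-1)} \, |\sh(\mathcal{S})|^{1/p'}$.

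Two separate losses of $p'$ compound here:
\begin{itemize}
\item the geometric sum over dyadic levels, $\sum_{k \leq K} 2^{k(p-1)} \asymp p' \, 2^{K(p-1)}$, which is already the source of $p'$ in the one-parameter proof;
\item the $L^{p'}$ norm of an $\exp L$ function.
\end{itemize}
When the regions of large height are aligned across levels, nothing prevents the two losses from multiplying. So no bound on the left-hand side in terms of $\sum_k 2^{kp} |\sh(\widetilde{\mathcal{E}}_k)|$ alone can close your chain with a single $p'$. The chain yields $(p')^2$, the same as your Step 2 and as the trivial bound through the dyadic strong maximal function.

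The paper never sums height functions over the dyadic levels of a general $f$. Its argument runs as follows.
\begin{itemize}
\item It applies the sparse domination of Theorem \ref{CLR:SparseDomination}, $\mathcal{M}_{\mathcal{E}} f \leq 2 \mathcal{P}_{\mathcal{E}}(\mathcal{M}_{\mathcal{F}} f)$, with a \emph{single} $\frac12$-sparse antichain $\mathcal{F}$. The maximal median operator is bounded on $L^p$ uniformly as $p \to 1$ (Proposition \ref{smm}).
\item It then uses $\mathcal{M}_{\mathcal{F}} f \leq \mathcal{A}_{\mathcal{F}} f$, and self-adjointness of $\mathcal{A}_{\mathcal{F}}$ moves the problem to $p \geq 3$.
\item At these exponents the level summation is done on indicator functions, where it is free. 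In Theorem \ref{restricted_strong}, $\mathcal{F}$ is split according to $\langle \ind_U \rangle_R \in (2^{-m}, 2^{-m+1}]$. The only factor $p$ comes from $\|h_{\mathcal{F}_m}\|_{L^p} \leq K p \, |\sh(\mathcal{F}_m)|^{1/p}$. The series in $m$ converges uniformly because $|\sh(\mathcal{F}_m)| \lesssim m 2^m |U|$ (Proposition \ref{height_function_bounds}(b) with Lemma \ref{inverse}) and $p \geq 2$.
\item Finally, Theorem \ref{interpolation} upgrades this restricted strong-type bound to strong type without a second factor of $p$. It is a Stein--Weiss interpolation with the constants tracked: cut $f$ at height $\lambda/(16p)$, use the $L^2$ bound on the large part, and use the restricted estimate at exponent $2p$ together with Hardy's inequality on the small part.
\end{itemize}
This combination of a restricted-type estimate with constant-tracking interpolation is the ingredient your proposal lacks.
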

The one-parameter maximal function over all squares satisfies the same
asymptotic behavior in $p'$,
while the two-parameter strong maximal function requires a $(p')^2$.
Thus, in this setting and in parallel to the situation described above about
sparse dyadic antichains, antichains of two-dimensional dyadic rectangles behave
like one-parameter families.

A result like Theorem \ref{MT:SharpLpForAntichainM} cannot be obtained, as is
usually done, through interpolation with the weak-type endpoint.
The usual `enemies' for the weak-type boundedness of a geometric maximal
function are antichains.
One can formalize this statement, at least in the dyadic setting, as follows.
\begin{lemma*}
  Let $\mathcal{E}$ be a collection of dyadic rectangles in $\mathbb{R}^d$.
  For any integrable function $f$ and every $\lambda > 0$, we have
  \begin{align*}
    |\mathcal{M}_{\mathcal{E}} f > \lambda| =
    \sup_{\mathcal{F}} |\mathcal{M}_{\mathcal{F}} f > \lambda|,
  \end{align*}
  where the supremum is taken over all antichains $\mathcal{F} \subseteq
  \mathcal{E}$.
\end{lemma*}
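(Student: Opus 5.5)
The inequality $\geq$ is immediate: for any $\mathcal{F} \subseteq \mathcal{E}$ we have $\mathcal{M}_{\mathcal{F}} f \leq \mathcal{M}_{\mathcal{E}} f$ pointwise. For $\leq$, I will reduce to finite families and then pass to the maximal elements, which form an antichain.

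\emph{Reduction to finite families.} The level set $\{\mathcal{M}_{\mathcal{E}} f > \lambda\}$ is the union of all $R \in \mathcal{E}$ with $\langle |f| \rangle_R > \lambda$. Call this subfamily $\mathcal{E}_\lambda$. The collection of all dyadic rectangles is countable, so $\mathcal{E}_\lambda$ is countable. Enumerate it as $R_1, R_2, \dots$ and set $\mathcal{E}_\lambda^N = \{R_1,\dots,R_N\}$. By monotone convergence,
\begin{align*}
  |\mathcal{M}_{\mathcal{E}} f > \lambda| = \lim_{N\to\infty} |\sh(\mathcal{E}_\lambda^N)|.
\end{align*}
It therefore suffices to find, for each $N$, an antichain $\mathcal{F}_N \subseteq \mathcal{E}$ with $\{\mathcal{M}_{\mathcal{F}_N} f > \lambda\} \supseteq \sh(\mathcal{E}_\lambda^N)$.

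\emph{Maximal elements.} Let $\mathcal{F}_N$ be the set of elements of $\mathcal{E}_\lambda^N$ that are maximal with respect to inclusion. Since the family is finite and inclusion is a partial order, every $R_i$ lies in some maximal $R \in \mathcal{F}_N$, so $\sh(\mathcal{F}_N) = \sh(\mathcal{E}_\lambda^N)$.

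There is one subtlety: distinct dyadic rectangles are distinct sets, so two different maximal elements cannot coincide. Hence $\mathcal{F}_N$ is a genuine antichain; no member contains another. Every $R \in \mathcal{F}_N$ satisfies $\langle |f| \rangle_R > \lambda$, so $\mathcal{M}_{\mathcal{F}_N} f > \lambda$ on $\sh(\mathcal{F}_N) = \sh(\mathcal{E}_\lambda^N)$. This gives
\begin{align*}
  |\sh(\mathcal{E}_\lambda^N)| \leq |\mathcal{M}_{\mathcal{F}_N} f > \lambda| \leq \sup_{\mathcal{F}} |\mathcal{M}_{\mathcal{F}} f > \lambda|,
\end{align*}
and letting $N \to \infty$ completes the proof.

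The only real obstacle is the finiteness step. An infinite family need not have maximal elements, for instance an increasing chain of dyadic rectangles. The truncation sidesteps this, and dyadicity enters only through countability. In fact the same argument would work for any countable family. Integrability of $f$ is used only so that the averages are finite, which is harmless.
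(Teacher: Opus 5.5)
Your proof is correct and complete. The paper states this lemma without proof, so there is no written argument to compare against, and yours is the natural justification. You identify the level set as $\sh(\mathcal{E}_\lambda)$ for the countable subfamily of rectangles with $\langle |f| \rangle_R > \lambda$, and exhaust it by finite truncations. In each truncation you keep the inclusion-maximal elements, which form an antichain inside $\mathcal{E}$ with the same shadow. Your closing remark is also accurate: dyadicity enters only through countability, and the finite truncation is exactly what guarantees that maximal elements exist.
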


Let $\mathcal{D}_d$ denote the collection of all dyadic $d$-dimensional
rectangles, that is: cartesian products of dyadic intervals.
The lemma shows that every weak-type inequality
\begin{align*}
  |\mathcal{M}_{\mathcal{F}} f > \lambda| \leq \Phi(f, \lambda),
\end{align*}
valid, uniformly over integrable $f$ and dyadic antichains $\mathcal{F}$,
is inherited by the dyadic strong maximal function
$\mathcal{M}_{\mathcal{D}_d}$.
But iteration of the easy one-dimensional maximal function bounds and testing on
indicator functions give
\begin{align*}
  \|\mathcal{M}_{\mathcal{D}_d}\|_{L^p(\mathbb{R}^d) \to L^p(\mathbb{R}^d)}
  \asymp (p')^d.
\end{align*}
In the inequality above, we have used the notation $A \asymp B$ to denote
$A \lesssim B \lesssim A$.

We now explain the connection between results like Theorem \ref{MT:ExpInt} and
Conjecture \ref{zygmund_conjecture}.
In order to do this, we first recall the geometric proof of the strong maximal
theorem of \cite{CordobaFefferman1975}.
We also describe Córdoba's proof \cite{Cordoba1979} of Conjecture
\ref{zygmund_conjecture} in dimension three, and compare it with our approach.
This will be done over the following three subsections.

\subsection{The geometric proof of the strong maximal theorem}

Covering lemmas have been a fruitful technique for proving weak-type estimates.
In \cite{Cordoba1975} Córdoba introduced a principle relating weak-type
estimates with certain Vitali-type covering lemmas.
These ideas were developed further by Córdoba and R. Fefferman, leading to the
beautiful geometric proof of the strong maximal theorem in
\cite{CordobaFefferman1975}.

Although not explicitly stated in this form, Córdoba and Fefferman use
the following result, which relates three important notions:
weak-type properties of a maximal function,
the exponential-integrability of the height function of sparse collections,
and exponential-type covering lemmas.
\begin{theorem} \label{CF:Weak-type|SparseExpInt|Covering}
  Let $\mathcal{E}$ be a countable collection of sets of finite measure.
  For every $k \geq 0$ we have
  \begin{enumerate}[label=(\alph*)]
    \item If $\mathcal{M}_{\mathcal{E}}$ is weak-type $\psi_k(L)$, then
      every $\eta$-sparse collection $\mathcal{F} \subseteq \mathcal{E}$
      satisfies
      \begin{align*}
        \int_{\sh(\mathcal{F})} \exp\Big(c h_{\mathcal{F}}^{\frac{1}{k+1}}\Big)
        \leq C |\sh(\mathcal{F})|,
      \end{align*}
      where the constants $C$ and $c$ depend on $\eta$.

    \item Suppose $k \geq 1$ and that every finite
      $\mathcal{F} \subseteq \mathcal{E}$ has a subcollection
      $\mathcal{G} \subseteq \mathcal{F}$ satisfying
      $|\sh(\mathcal{G})| \asymp |\sh(\mathcal{F})|$ as well as the
      exponential-integrability estimate
      \begin{align*}
        \int_{\sh(\mathcal{G})} \exp\Big(c h_{\mathcal{G}}^{\frac{1}{k}}\Big)
        \lesssim |\sh(\mathcal{G})|.
      \end{align*}

      Then $\mathcal{M}_{\mathcal{E}}$ is weak-type $\psi_{k}$.
  \end{enumerate}
\end{theorem}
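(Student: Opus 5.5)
For part (a), I will use testing against a well-chosen function. Fix an $\eta$-sparse family $\mathcal{F} \subseteq \mathcal{E}$ with disjoint sets $E(R) \subseteq R$, and reduce to finite $\mathcal{F}$ by monotone convergence. The tail estimate to aim for is
\begin{align*}
  |\{h_{\mathcal{F}} > \lambda\}| \lesssim \exp\big(-c\lambda^{1/(k+1)}\big) |\sh(\mathcal{F})|,
\end{align*}
since integrating it in $\lambda$ gives the claim. Let $\Omega_\lambda = \{h_{\mathcal{F}} > \lambda\}$. Consider $f = \sum_{R} \ind_{E(R) \cap \Omega_\lambda}$, or more precisely its $\{0,1\}$-valued version, since the sets $E(R)$ are disjoint. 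The key step is a John--Nirenberg / good-$\lambda$ iteration in the spirit of Córdoba--Fefferman. First show that, for a fixed large integer $N$, every $R$ that meets $\Omega_{\lambda+N}$ is essentially contained in $\Omega_\lambda$ on a proportion of $E(R)$. This follows from sparseness and a counting argument: a point of height $\lambda+N$ lies in at least $N$ rectangles, and their $E$-sets cannot all escape $\Omega_\lambda$. Hence $\mathcal{M}_{\mathcal{F}}\ind_{\Omega_\lambda} \gtrsim \eta$ on $\Omega_{\lambda+N}$ in an averaged sense.

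Applying the weak-type $\psi_k$ bound at level $\sim\eta$ alone only gives $|\Omega_{\lambda+N}| \lesssim |\Omega_\lambda|$, which is not a gain. The gain has to come from choosing the level adapted to the height: I will use levels $\delta = 2^{-j}$ and a decomposition of heights into blocks. At height $\lambda+N$, the local density of $\Omega_\lambda$ inside many rectangles is $\ge 1-2^{-j}$. This should yield
\begin{align*}
  |\Omega_{\lambda+N_j}| \le \tfrac12 |\Omega_\lambda| \quad\text{with}\quad N_j \sim \psi_k\text{-dependent},
\end{align*}
where the cost of passing from $L\log^k L$ with $\lambda \sim 1/\delta$ gives $\log^k$ growth. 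Iterating then produces the exponent $1/(k+1)$. Concretely, I expect $|\{h > t\}| \le C \exp(-c t^{1/(k+1)}) |\sh|$ by optimizing the number of iterations against the $\log^{k}(1/\delta)$ loss. Making this iteration rigorous is the main obstacle: one must produce a geometric decay in measure at each stage while the weak-type constant degrades like $\log(1/\delta)^k$.

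For part (b), I follow the Córdoba--Fefferman covering argument. Given $f$, $\lambda$, and a compact $K \subseteq \{\mathcal{M}_{\mathcal{E}}f > \lambda\}$, choose a finite $\mathcal{F}$ covering $K$ with $\langle |f|\rangle_R > \lambda$. Take the promised $\mathcal{G}$, so that $|K| \le |\sh\mathcal{F}| \lesssim |\sh\mathcal{G}|$. Then use the duality between $\exp(L^{1/k})$ and $L\log^k L$ via the Young-type inequality $ab \lesssim \exp(a^{1/k}) + b\log^k(e+b)$:
\begin{align*}
  \lambda\sum_{R \in \mathcal{G}}|R| \le \int |f| h_{\mathcal{G}} \lesssim \epsilon^{-1}\lambda\int_{\sh\mathcal{G}} \exp(c h_{\mathcal{G}}^{1/k}) + \int \psi_k(|f|/\lambda).
\end{align*}
Here I scale with $\epsilon$, replacing $h$ by $\epsilon h$ and $f/\lambda$ by $f/(\epsilon\lambda)$, so the first term is at most $\tfrac12 |\sh\mathcal{G}|$. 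Combined with $|\sh\mathcal{G}| \le \sum_R|R|$, this gives $|\sh\mathcal{G}| \lesssim \int\psi_k(|f|/\lambda)$, using $\psi_k(x/\epsilon) \lesssim_\epsilon \psi_k(x)$. This part I expect to be routine.
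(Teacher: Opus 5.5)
\textbf{Part (a) has a genuine gap.} Your key step says that a point of height $>\lambda+N$ forces some set through it (let alone every set) to keep a fixed proportion of its $E$-set inside $\Omega_\lambda$. That is a John--Nirenberg property of \emph{nested} families: for one-parameter dyadic cubes, the $N$ smallest cubes through the point lie entirely in $\Omega_\lambda$. It fails as soon as the sets through a point are not nested, which is exactly the multi-parameter situation the theorem is about.

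For a counterexample, take the $\frac12$-sparse antichain $R_m=[0,2^m)\times[0,2^{-m})$, $|m|\le M$, from Section \ref{sec:strong_type}. It is a subfamily of $\mathcal{D}_2$, so $k=1$.
\begin{itemize}
\item The origin has height $2M+1$.
\item Write $(x,y)=(2^m u,2^{-m}v)\in R_m$. The indices $m'$ counted by $h$ at $(x,y)$ lie in an open interval of length $\log_2(1/u)+\log_2(1/v)$.
\item Hence $h\le 1+\log_2(1/u)+\log_2(1/v)$, and $|R_m\cap\{h>\lambda\}|\lesssim \lambda 2^{-\lambda}|R_m|$.
\end{itemize}
With $\lambda=N=M$, no $E(R_m)$ keeps a fixed proportion in $\Omega_\lambda$.

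Your iteration also cannot work with fixed increments. A bound $|\Omega_{\lambda+N}|\le\frac12|\Omega_\lambda|$ with $N$ independent of $\lambda$ would give $\exp(c\,h_{\mathcal{F}})$-integrability. The anchored family of Subsection \ref{sec:heights:sharp} rules this out for $k=1$. So the increments must grow with $\lambda$, and you supply no mechanism producing them; as you acknowledge, that is the whole difficulty.

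\textbf{The missing idea is a one-step testing argument; no iteration is needed.} For measurable $U$, sparseness and $E(R)\subseteq R$ give
\[
\int_U h_{\mathcal{F}}=\sum_{R\in\mathcal{F}}\langle\ind_U\rangle_R|R|\le\eta^{-1}\sum_{R\in\mathcal{F}}\int_{E(R)}\mathcal{M}_{\mathcal{E}}\ind_U\le\eta^{-1}\int_{\sh(\mathcal{F})}\mathcal{M}_{\mathcal{E}}\ind_U .
\]
Since $\mathcal{M}_{\mathcal{E}}\ind_U\le 1$ and the weak-type hypothesis gives $|\{\mathcal{M}_{\mathcal{E}}\ind_U>\delta\}|\lesssim\psi_k(\delta^{-1})|U|$, the layer-cake formula yields
\[
\int_{\sh(\mathcal{F})}\mathcal{M}_{\mathcal{E}}\ind_U\le\int_0^1\min\big\{|\sh(\mathcal{F})|,\,C\psi_k(\delta^{-1})|U|\big\}\,d\delta\lesssim |U|\log^{k+1}\Big(e+\frac{|\sh(\mathcal{F})|}{|U|}\Big).
\]
The extra logarithm comes from integrating the weak-type bound over the level $\delta$.

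Now take $U=\{h_{\mathcal{F}}>t\}$ and use $t|U|\le\int_U h_{\mathcal{F}}$. This gives $t\lesssim\eta^{-1}\log^{k+1}(e+|\sh(\mathcal{F})|/|U|)$, that is,
\[
|\{h_{\mathcal{F}}>t\}|\lesssim \exp\big(-c(\eta t)^{1/(k+1)}\big)|\sh(\mathcal{F})|.
\]
The layer-cake formula then finishes (a). This is the estimate of Proposition \ref{height_function_bounds}(b) read in the opposite direction.

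\textbf{Part (b) is essentially right.} It is the same Young-inequality absorption the paper uses to show that Conjecture \ref{conjecture:antichain_exp_int} implies Conjecture \ref{zygmund_conjecture}. The theorem itself is stated in the paper without proof and attributed to Córdoba--Fefferman. Two repairs are needed:
\begin{itemize}
\item The members of $\mathcal{E}$ are arbitrary sets of finite measure, so the compact-set step does not give a finite subcover. Instead, exhaust the countable union $\bigcup\{R\in\mathcal{E}:\langle|f|\rangle_R>\lambda\}$ by finite subfamilies.
\item The small constant must multiply the exponential term; your $\epsilon^{-1}$ is on the wrong side. Use $ab\le C_\varepsilon\,\psi_k(a)+\varepsilon\exp(c\,b^{1/k})$, valid for every $\varepsilon>0$, and then absorb. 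Absorbing is legitimate because $|\sh(\mathcal{G})|<\infty$.
\end{itemize}
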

They also provide a selection procedure which, given a finite collection
of rectangles $\mathcal{F} \subseteq \mathcal{R}_d$, produces a
subcollection $\mathcal{G} \subseteq \mathcal{F}$ of comparable measure and
satisfying the so-called $(P_2)$ condition: for every $R \in \mathcal{G}$
\begin{align} \label{p2} \tag{$P_2$}
  |R \cap \sh(\mathcal{G} \setminus \{R\})| \leq \frac{1}{2}|R|.
\end{align}
This is their selection process:
\begin{theorem}[See the proof of Theorem 2 in \cite{CordobaFefferman1975}]
  \label{CF:Selection}
  For every countable collection $\mathcal{F} \subseteq \mathcal{R}_d$ with
  $|\sh(\mathcal{F})| < \infty$ there
  exists a finite subcollection $\mathcal{G} \subseteq \mathcal{F}$ satisfying
  the \eqref{p2} condition and
  $|\sh(\mathcal{G})| \asymp |\sh(\mathcal{F})|$.
\end{theorem}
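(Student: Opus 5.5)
The plan is to run a greedy selection with a \emph{small} one-sided overlap threshold, then upgrade to the two-sided condition \eqref{p2} by discarding the few rectangles that violate it. Sparseness with constant close to $1$ forces the discarded rectangles to carry only a small fraction of the shadow. No special ordering of the rectangles will be needed.

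\emph{Step 1: reduction to a finite family.} Enumerate $\mathcal{F}=\{R_1,R_2,\dots\}$. By monotone convergence, $|\bigcup_{j\le N}R_j|\to|\sh(\mathcal{F})|<\infty$, so we can fix $N$ with $\mathcal{F}_N=\{R_1,\dots,R_N\}$ satisfying $|\sh(\mathcal{F}_N)|\ge\frac12|\sh(\mathcal{F})|$. It then suffices to find $\mathcal{G}\subseteq\mathcal{F}_N$ satisfying \eqref{p2} with $|\sh(\mathcal{G})|\gtrsim|\sh(\mathcal{F}_N)|$. The upper bound $|\sh(\mathcal{G})|\le|\sh(\mathcal{F})|$ is automatic.

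\emph{Step 2: one-sided greedy selection.} Fix $\delta=\frac1{10}$ and go through $R_1,\dots,R_N$ in order. Select $R_k$ if
\begin{align*}
|R_k\cap\sh(\text{previously selected})|\le\delta|R_k|,
\end{align*}
and call the selected family $\mathcal{G}_1$.

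We first show $|\sh(\mathcal{G}_1)|\gtrsim|\sh(\mathcal{F}_N)|$. Every rejected $R$ satisfies $\langle\ind_{\sh(\mathcal{G}_1)}\rangle_R>\delta$, so $R\subseteq\{\mathcal{M}_{\mathcal{R}_2}\ind_{\sh(\mathcal{G}_1)}>\delta\}$; here we write $\mathcal{R}_d$ in place of $\mathcal{R}_2$ in general dimension. By Chebyshev and the $L^2$ boundedness of the strong maximal function (iterated one-dimensional bounds), this set has measure at most $C\delta^{-2}|\sh(\mathcal{G}_1)|$. Selected rectangles lie in $\sh(\mathcal{G}_1)$ itself, so $|\sh(\mathcal{F}_N)|\le(1+C\delta^{-2})|\sh(\mathcal{G}_1)|$.

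Next, $\mathcal{G}_1$ is $(1-\delta)$-sparse: take $E(R)=R\setminus\sh(\text{earlier selected})$. These sets are pairwise disjoint and $|E(R)|\ge(1-\delta)|R|$. Hence, writing $h=h_{\mathcal{G}_1}$ and $S=\sh(\mathcal{G}_1)$,
\begin{align*}
\int h=\sum_{R\in\mathcal{G}_1}|R|\le\frac{1}{1-\delta}|S|.
\end{align*}

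\emph{Step 3: pruning to get \eqref{p2}.} This is the step where the two-sided condition is obtained. Since $h\ge 1$ on $S$ and $h-1\ge\frac12 h$ on $\{h\ge2\}$,
\begin{align*}
\int h\ge|S|+\tfrac12\int_{\{h\ge2\}}h,
\qquad\text{hence}\qquad
\int_{\{h\ge2\}}h\le\frac{2\delta}{1-\delta}|S|.
\end{align*}
Call $R\in\mathcal{G}_1$ \emph{bad} if $|R\cap\sh(\mathcal{G}_1\setminus\{R\})|>\frac12|R|$. Points of $R\cap\sh(\mathcal{G}_1\setminus\{R\})$ have $h\ge2$, so a bad $R$ satisfies $|R|<2\int_R\ind_{\{h\ge2\}}$. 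Summing over bad $R$ gives
\begin{align*}
\sum_{\text{bad}}|R|\le2\int h\,\ind_{\{h\ge2\}}\le\frac{4\delta}{1-\delta}|S|.
\end{align*}

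Let $\mathcal{G}=\mathcal{G}_1\setminus\{\text{bad}\}$. Removing rectangles can only shrink $\sh(\mathcal{G}\setminus\{R\})$, so every remaining $R$ still satisfies $|R\cap\sh(\mathcal{G}\setminus\{R\})|\le\frac12|R|$; this is exactly \eqref{p2}, with both earlier and later rectangles controlled. Moreover
\begin{align*}
|\sh(\mathcal{G})|\ge\Big(1-\frac{4\delta}{1-\delta}\Big)|S|=\frac59|S|\gtrsim|\sh(\mathcal{F})|
\end{align*}
for $\delta=\frac1{10}$, and $\mathcal{G}$ is finite.

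The main obstacle I anticipate is precisely this passage from the one-sided greedy condition to the symmetric \eqref{p2}. The plan is to handle it by choosing $\delta$ small, so that the sparseness constant is close to $1$ and the total overlap $\int_{\{h\ge2\}}h$ is a small fraction of the shadow. The only analytic input beyond bookkeeping is the $L^2$ bound for $\mathcal{M}_{\mathcal{R}_d}$ used in Step 2.
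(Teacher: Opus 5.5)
Your proof is correct, and it takes a genuinely different route from the paper's.

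\textbf{The paper's route.} The paper (Theorem \ref{zygmund:Selection}) uses no ordering and no small parameter. It picks $\mathcal{G}$ maximizing $\Delta(\mathcal{G}) = 2|\sh(\mathcal{G})| - \sum_{R\in\mathcal{G}}|R|$ over all subfamilies of the finite family. Comparing $\mathcal{G}$ with $\mathcal{G}\setminus\{R\}$ gives \eqref{p2} directly, already in its symmetric form and with constant $\frac12$. Comparing $\mathcal{G}$ with $\mathcal{G}\cup\{S\}$ gives $|S\cap\sh(\mathcal{G})|\ge\frac12|S|$ for every unselected $S$. The same $L^2$ bound for $\mathcal{M}_{\mathcal{R}_d}$ then finishes.

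\textbf{Your route.} You run a one-sided, Córdoba--Fefferman-type greedy selection with threshold $\delta$. You then upgrade the one-sided condition to the two-sided one using a packing observation: a family $\mathcal{H}$ with $\sum_{R\in\mathcal{H}}|R|\le(1+\epsilon)|\sh(\mathcal{H})|$ has $\int_{\{h_{\mathcal{H}}\ge 2\}}h_{\mathcal{H}}\le 2\epsilon|\sh(\mathcal{H})|$. Hence the rectangles violating \eqref{p2} carry total mass at most $4\epsilon|\sh(\mathcal{H})|$ and can be discarded, and removing them only helps the survivors. All the computations check out: with $\delta=\frac1{10}$, so $\epsilon=\frac19$, you keep $\frac59$ of the shadow.

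\textbf{What each buys.} The variational proof is shorter, needs no tuning of $\delta$, and gives better constants: it uses the level $\frac12$ in the maximal-function step, whereas you pay $\delta^{-2}=100$. Your argument is explicitly algorithmic. It also isolates a reusable principle: near-optimal Carleson packing forces a \eqref{p2} subfamily of comparable shadow. This makes transparent why the symmetric condition \eqref{p2} costs essentially nothing beyond the classical one-sided stopping rule.
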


The remaining insight in \cite{CordobaFefferman1975} needed to complete the
proof is that, in the dyadic case, every \emph{slice} of a \eqref{p2} collection
is sparse.
To be precise, for each $i \in \{1, \dots, d\}$ define the projections
$\pi_i : \mathbb{R}^d \to \mathbb{R}$ and $\pi_i^* : \mathbb{R}^d \to
\mathbb{R}^{d-1}$, where $\pi_i$ is the projection that keeps only the $i$-th
coordinate, and $\pi_i^*$ is the projection that drops the $i$-th coordinate.
The following lemma is essentially contained in \cite{CordobaFefferman1975}.
\begin{lemma}
  \label{CF:slices_of_p2}
  If $\mathcal{G}$ is \eqref{p2} and consists of dyadic rectangles in
  $\mathbb{R}^d$, then for every $t \in \mathbb{R}$ and every
  $i \in \{1, \dots, d\}$
  \begin{align*}
    S_i^t(\mathcal{G}) := \{\pi_i^*(R):\, t \in \pi_i(R)\}
  \end{align*}
  is $\frac{1}{2}$-sparse, seen as a collection of $(d-1)$-dimensional dyadic
  rectangles.
  Moreover, the projection $\pi_i^*$ is a bijection between
  $\{R \in \mathcal{G}:\, t \in \pi_i(R)\}$ and $S_i^t(\mathcal{G})$.
\end{lemma}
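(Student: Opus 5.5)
We prove the bijection claim first, since the sparseness argument uses it. Then we define the sparse sets explicitly from the \eqref{p2} condition.

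\emph{Bijection.} Fix $i$ and $t$, and let $\mathcal{G}^t = \{R \in \mathcal{G}:\, t \in \pi_i(R)\}$. Each $R \in \mathcal{G}^t$ is $R = \pi_i^*(R) \times I_R$ after reordering coordinates, where $I_R = \pi_i(R)$ is a dyadic interval containing $t$. Suppose $R \neq R'$ in $\mathcal{G}^t$ have $\pi_i^*(R) = \pi_i^*(R')$. Then $I_R$ and $I_{R'}$ are dyadic intervals that both contain $t$, so they are nested; say $I_R \subseteq I_{R'}$. Hence $R \subseteq R'$, and therefore $|R \cap \sh(\mathcal{G}\setminus\{R\})| \geq |R| > \frac12 |R|$. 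This contradicts \eqref{p2}. So $\pi_i^*$ is injective on $\mathcal{G}^t$, and it is surjective onto $S_i^t(\mathcal{G})$ by definition.

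\emph{Sparseness.} For $Q = \pi_i^*(R) \in S_i^t(\mathcal{G})$, with $R$ the unique preimage, set
\[
  E(Q) = Q \setminus \bigcup_{R' \in \mathcal{G}^t,\, R' \neq R} \pi_i^*(R').
\]
These sets are pairwise disjoint and $E(Q) \subseteq Q$, so it remains to show $|E(Q)| \geq \frac12 |Q|$. The key observation is a product structure. For $R' \in \mathcal{G}^t$ with $R' \neq R$, both $I_R$ and $I_{R'}$ contain $t$, so $I_R \cap I_{R'}$ is the smaller of the two, and
\[
  R \cap R' = \big(Q \cap \pi_i^*(R')\big) \times (I_R \cap I_{R'}).
\]
We would therefore restrict attention to the \emph{smallest} $i$-th interval. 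Let $J$ be the smallest interval among $\{I_{R'}:\, R' \in \mathcal{G}^t\}$, which are nested because they all contain $t$. If $\mathcal{G}^t$ is infinite, we instead intersect with a small dyadic interval $J \ni t$ contained in $I_R$. Then
\[
  \Big(\bigcup_{R' \neq R} \pi_i^*(R') \cap Q\Big) \times J \subseteq R \cap \sh(\mathcal{G}\setminus\{R\}) \cap (Q \times J).
\]
Restricting to the slab $Q \times J$ is not directly controlled by \eqref{p2}, so a cleaner route is to average over the $i$-th variable. For each $s \in I_R$, the slice of $R \cap \sh(\mathcal{G}\setminus\{R\})$ at $i$-th coordinate $s$ contains $\bigcup\{Q \cap \pi_i^*(R'):\, s \in I_{R'}\}$. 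This is the step I expect to be the main obstacle: the rectangles $R'$ with short $I_{R'}$ cover $Q$ only for $s$ near $t$, so integrating in $s$ loses mass.

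To get around this, I would exploit dyadic structure to choose $t$-independent data. Consider the rectangles $R' \in \mathcal{G}^t$ with $I_{R'} \supseteq I_R$; these are the ones whose $\pi_i^*(R')$ cover the full height $I_R$. The remaining ones, with $I_{R'} \subsetneq I_R$, should be treated by a symmetric argument applied to $R'$ in place of $R$. Concretely, order $\mathcal{G}^t$ by decreasing $|I_{R'}|$ and define $E(Q)$ as $Q$ minus the projections of the rectangles that come earlier in this order. With this choice, every $R'$ earlier than $R$ has $I_{R'} \supseteq I_R$, so
\[
  |Q \setminus E(Q)|\,|I_R| = \Big|\Big(Q \cap \bigcup_{R' \text{ earlier}} \pi_i^*(R')\Big) \times I_R\Big| \leq |R \cap \sh(\mathcal{G}\setminus\{R\})| \leq \tfrac12 |Q|\,|I_R|.
\]
This gives $|E(Q)| \geq \frac12 |Q|$. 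The sets $E(Q)$ are pairwise disjoint: each point $x$ lies in $E(Q)$ only for the earliest $Q$ in the order that contains $x$. Ties in $|I_{R'}|$ mean equal dyadic intervals containing $t$, i.e.\ the same interval, and we break them arbitrarily; the inclusion $I_{R'} \supseteq I_R$ still holds for earlier elements. This is the Carleson-type ordering trick, and it handles the obstacle above.
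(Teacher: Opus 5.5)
Your final argument is correct and is essentially the paper's proof. The bijection follows from nestedness of dyadic intervals containing $t$ together with the antichain property forced by \eqref{p2}. Sparseness follows by deleting from $\pi_i^*(R)$ the projections of those $R'$ with $\pi_i(R')\supseteq\pi_i(R)$, so that the removed set times $\pi_i(R)$ lies in $R\cap\sh(\mathcal{G}\setminus\{R\})$. Your first attempt, $E(Q)=Q\setminus\bigcup_{R'\neq R}\pi_i^*(R')$, can indeed have too little mass, and you were right to drop it.

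The only difference from the paper is tie-breaking. The paper removes all such $R'$, ties included, which gives disjointness symmetrically without choosing an order; you instead break ties with a total order. Both versions work, also for countable $\mathcal{G}$.
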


With these ingredients, the proof of the strong maximal theorem in
\cite{CordobaFefferman1975} essentially proceeds as follows.
By the $\frac{1}{3}$-trick, we can reduce to proving that the \emph{dyadic}
strong maximal function $\mathcal{M}_{\mathcal{D}_d}$ is weak-type
$\psi_{d-1}(L)$.
Let $d \geq 2$ and assume by induction that $\mathcal{M}_{\mathcal{D}_{d-1}}$
is weak-type $\psi_{d-2}(L)$.

Fix a finite collection $\mathcal{F}$ of dyadic $d$-dimensional rectangles.
By Theorem \ref{CF:Selection}, we can extract a \eqref{p2} subcollection
$\mathcal{G} \subseteq \mathcal{F}$ with
$|\sh(\mathcal{G})| \asymp |\sh(\mathcal{F})|$.
By Fubini's theorem, Lemma \ref{CF:slices_of_p2}, and Theorem
\ref{CF:Weak-type|SparseExpInt|Covering} (a), we have
\begin{align*}
  \int_{\sh(\mathcal{G})} \exp\Big(c h_{\mathcal{G}}^{\frac{1}{d-1}}\Big)
    \lesssim |\sh(\mathcal{G})|.
\end{align*}
By Theorem \ref{CF:Weak-type|SparseExpInt|Covering} (b), this completes the
induction step.
The base case $d=1$ is the weak-type $(1,1)$ inequality of the one-dimensional
dyadic maximal function.

\subsection{Córdoba's proof of Conjecture \ref{zygmund_conjecture} in dimension
three}

In \cite{Cordoba1979}, Córdoba used these ideas to prove Conjecture
\ref{zygmund_conjecture} in dimension three.
Briefly, his proof proceeds as follows.
Start, as in the proof of the strong maximal theorem above, with a finite family
$\mathcal{F} \subseteq \mathcal{Z}_3$.
By Theorem \ref{CF:Weak-type|SparseExpInt|Covering} (b) it suffices to find
$\mathcal{G} \subseteq \mathcal{F}$ with
$|\sh(\mathcal{G})| \asymp |\sh(\mathcal{F})|$ and such that
\begin{align} \label{cordoba:exp_int}
  \int_{\sh(\mathcal{G})} \exp\big(h_{\mathcal{G}}\big)
    \lesssim |\sh(\mathcal{G})|.
\end{align}
Now, instead of applying Theorem \ref{CF:Selection}, he applies a different
selection process, one especially adapted to obtaining \eqref{cordoba:exp_int}.
In particular, he sorts the family by decreasing third side, breaking ties in
the third side by decreasing first side and then decreasing second side.
Then, in this order, he iteratively appends each rectangle $R \in \mathcal{F}$
to $\mathcal{G}$ provided that
\begin{align*}
  \frac{1}{|R|}\int_{R} \exp(h_{\mathcal{G}}) \leq \gamma,
\end{align*}
holds for some fixed constant $\gamma$.
One easily checks that the resulting family satisfies \eqref{cordoba:exp_int}.
What remains is showing that $|\sh(\mathcal{G})| \asymp |\sh(\mathcal{F})|$.
For this, suppose $R \in \mathcal{F}$ was discarded and let $\mathcal{G}_{<R}$
be all the rectangles in $\mathcal{G}$ that intersect $R$ and were processed
before it.
The key geometric fact is that every rectangle in $\mathcal{G}_{<R}$ has first
or second side at least as large as $R$'s.

If $T \in \mathcal{G}_{<R}$ had both first and second sides smaller than those
of $R$ then, by the monotonicity of $\Phi$, its third side would be at most that
of $R$; but then $T$ would have been processed after $R$, contradicting the
order.

This allows the family to be written as a union:
\begin{align*}
  \mathcal{G}_{<R} = \mathcal{H} \cup \mathcal{V},
\end{align*}
where $\mathcal{H}$ are those rectangles in $\mathcal{G}_{<R}$ with first side
at least as large as that of $R$, and similarly $\mathcal{V}$ consists of those
rectangles in $\mathcal{G}_{<R}$ with second side at least as large as that of
$R$.

Using the fact that the rectangles in $\mathcal{F}$ are dyadic, the functions
$h_{\mathcal{H}}$ and $h_{\mathcal{V}}$, restricted to $R$, depend only on $y$
and $x$ respectively, thus, if $R = I \times J \times K$
\begin{align*}
  \gamma < \frac{1}{|R|}\int_R \exp(h_{\mathcal{G}_{<R}}) \leq
  \bigg(\frac{1}{|I|} \int_I \exp(h_{\mathcal{V}})\bigg)
  \bigg( \frac{1}{|J|} \int_J\exp(h_{\mathcal{H}}) \bigg).
\end{align*}
One can then use the one-dimensional maximal function in the first and second
directions to bound the measure of the union of the discarded rectangles by
$|\sh(\mathcal{G})|$, and complete the proof.

As explained by Fefferman and Pipher in \cite{FeffermanPipher2005}, one
encounters two obstacles when trying to extend this proof to higher dimensions:
the use of higher-dimensional
maximal functions in the last step, and the fact that the splitting of
$\mathcal{G}_{<R}$ now produces functions $h_{\mathcal{B}_i}$ that, when
restricted to $R$, depend on $(d-2)$ variables. The first issue is solved in
\cite{FeffermanPipher2005}. It is unknown how to address the second issue when
$d \geq 4$.

\begin{remark*}
  Córdoba's covering lemma, like the argument sketched above, is formulated
  for dyadic rectangles, while Zygmund's original question concerns
  rectangles with arbitrary real sidelengths.
  Since monotone functions need not be doubling, rounding the sides of a
  rectangle may change the value of $\Phi$ by an arbitrarily large factor,
  so the passage between the two settings is not a routine application of
  the $\frac{1}{3}$-trick.
  In dimension three the reduction can nevertheless be carried out, using
  that the monotonicity property in Córdoba's covering lemma survives the
  rounding, see \cite{ADMM2023}.
  In this article we work exclusively in the dyadic setting.
\end{remark*}

\subsection{The approach to Zygmund's conjecture}

Our strategy towards Zygmund's conjecture replaces Córdoba's astute selection
procedure with a plain application of the \eqref{p2} selection process in
Theorem \ref{CF:Selection}, together with the following observation.
\begin{theorem} \label{slices_are_sparse_antichains}
  If $\mathcal{E}$ is a \eqref{p2} subcollection of $\mathcal{Z}_{d+1}$, then
  every slice $S_{d+1}^t(\mathcal{E})$ perpendicular to the last coordinate axis
  is a $\frac{1}{2}$-sparse antichain of dyadic $d$-dimensional rectangles.
  Moreover, for every $t \in \mathbb{R}$ and $x \in \mathbb{R}^d$,  we have
  \begin{align*}
    h_{\mathcal{E}}(x,t) = h_{S_{d+1}^t(\mathcal{E})}(x).
  \end{align*}
\end{theorem}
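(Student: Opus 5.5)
Sparseness, the bijection, and the height identity all come directly from Lemma \ref{CF:slices_of_p2} with $i = d+1$. The lemma says that $S_{d+1}^t(\mathcal{E})$ is $\frac{1}{2}$-sparse. It also says that $\pi_{d+1}^*$ is a bijection from $\mathcal{E}_t := \{R \in \mathcal{E} :\, t \in \pi_{d+1}(R)\}$ onto $S_{d+1}^t(\mathcal{E})$. For the height identity, a rectangle $R = Q \times K$ contains $(x,t)$ if and only if $t \in K$ and $x \in Q$, that is, if and only if $R \in \mathcal{E}_t$ and $x \in \pi_{d+1}^*(R)$. Summing indicators and using injectivity of $\pi_{d+1}^*$ on $\mathcal{E}_t$ (so no slice rectangle is counted twice), we get $h_{\mathcal{E}}(x,t) = h_{S_{d+1}^t(\mathcal{E})}(x)$.

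The substantive part is the antichain property, and the argument uses the monotonicity of $\Phi$ together with \eqref{p2}. Suppose two distinct members of the slice satisfy $Q_1 \subseteq Q_2$, where $Q_j = \pi_{d+1}^*(R_j)$ and $R_j = Q_j \times K_j \in \mathcal{E}_t$. By the bijection, $R_1 \neq R_2$, and $Q_1 \subsetneq Q_2$ as sets (they are distinct). Each side of $Q_1$ is at most the corresponding side of $Q_2$, so $m_i(R_1) \le m_i(R_2)$ for all $i \le d$. Monotonicity of $\Phi$ then gives $|K_1| = 2^{\Phi(m(R_1))} \le 2^{\Phi(m(R_2))} = |K_2|$. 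Both $K_1$ and $K_2$ are dyadic intervals containing $t$, so they are nested, and the smaller one lies inside the larger: $K_1 \subseteq K_2$. Hence $R_1 \subseteq R_2$.

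This contradicts \eqref{p2}: since $R_2 \in \mathcal{E} \setminus \{R_1\}$, we would have $|R_1 \cap \sh(\mathcal{E}\setminus\{R_1\})| = |R_1| > \frac{1}{2}|R_1|$, which needs only $|R_1| > 0$ and holds for rectangles. So the slice is an antichain.

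The only delicate point is the direction of monotonicity ("increasing in each argument") and the use of dyadic nesting to turn a length comparison into containment. Both are routine once set up, so I expect no real obstacle; the main care is in invoking injectivity correctly so that the sets $Q_1 \neq Q_2$ come from distinct rectangles.
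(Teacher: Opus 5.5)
Your proof is correct and follows the paper's argument essentially step for step. Sparseness and the bijection come from Lemma \ref{CF:slices_of_p2}, and the height identity follows from the bijection. For the antichain property, monotonicity of $\Phi$ together with dyadic nesting of the last sides upgrades $Q_1 \subseteq Q_2$ to $R_1 \subseteq R_2$, which \eqref{p2} forbids unless the rectangles coincide. You only spell out details the paper leaves implicit, namely the indicator computation and the direct contradiction with \eqref{p2} instead of citing that \eqref{p2} collections are antichains.
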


We will prove this theorem in Section \ref{sec:zygmund_reductions}.
Assuming, for now, that it holds, we can explain how the
exponential-integrability estimate of Theorem
\ref{MT:ExpInt} implies Conjecture
\ref{zygmund_conjecture} in dimension three.

Take a finite $\mathcal{F} \subseteq \mathcal{Z}_3$, and apply the
Córdoba-Fefferman selection process, Theorem \ref{CF:Selection}, to obtain a
\eqref{p2} subcollection $\mathcal{G} \subseteq \mathcal{F}$ satisfying
$|\sh(\mathcal{G})| \asymp |\sh(\mathcal{F})|$.
By Theorem \ref{slices_are_sparse_antichains}, every slice $S_3^t(\mathcal{G})$
is a $\frac{1}{2}$-sparse antichain of dyadic two-dimensional rectangles, thus
by Theorem \ref{MT:ExpInt}
\begin{align*}
  \int_{\sh(S_3^t(\mathcal{G}))} \exp\big( c h_{S_3^t(\mathcal{G})}\big)
    \leq C|\sh(S_3^t(\mathcal{G}))|,
\end{align*}
where the constants $c$ and $C$ are those provided by Theorem \ref{MT:ExpInt},
and are independent of $t$, $\mathcal{G}$, or $\mathcal{F}$.
By Fubini's theorem, we can integrate with respect to $t$ and obtain
\begin{align*}
  \int_{\sh(\mathcal{G})} \exp(ch_{\mathcal{G}})
  &= \int_{\mathbb{R}} \int_{\sh(S_3^t(\mathcal{G}))}
    \exp(c h_{S_3^t(\mathcal{G})}(x)) \, dx \, dt \\
  &\leq C \int_{\mathbb{R}} |\sh(S_3^t(\mathcal{G}))| \, dt
  = C|\sh(\mathcal{G})|.
\end{align*}
Applying Theorem \ref{CF:Weak-type|SparseExpInt|Covering} (b) we conclude that
$\mathcal{M}_{\mathcal{Z}_3}$ is weak-type $\psi_1(L)$.

This easy argument generalizes to arbitrary dimensions, provided we have a
suitable higher-dimensional version of Theorem
\ref{MT:ExpInt}. This observation leads to the
following conjecture, which by the argument above, would imply Conjecture
\ref{zygmund_conjecture}.
\begin{conjecture} \label{conjecture:antichain_exp_int}
  For every $d \geq 2$ there exist finite positive constants $c$ and $C$ such
  that for every $\frac{1}{2}$-sparse antichain $\mathcal{E}$ of dyadic
  $d$-dimensional rectangles, we have
  \begin{align*}
    \int_{\sh(\mathcal{E})} \exp\Big(c h_{\mathcal{E}}^{\frac{1}{d-1}}\Big)
    \leq C|\sh(\mathcal{E})|.
  \end{align*}
\end{conjecture}

The proof of Theorem \ref{MT:ExpInt} rests on the following geometric result.
\begin{theorem}
  \label{MT:kwise}
  There exists a positive finite constant $C$ such that
  for every $\frac{1}{2}$-sparse antichain $\mathcal{E}$ of two-dimensional
  dyadic rectangles and for every $k \geq 1$
  \begin{align*}
    \sum_{\substack{\mathcal{F} \subseteq \mathcal{E} \\
      \#(\mathcal{F}) = k
    }} \Big|
      \bigcap_{R \in \mathcal{F}} R
    \Big|
    \leq C^k |\sh(\mathcal{E})|.
  \end{align*}
\end{theorem}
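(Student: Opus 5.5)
The plan is to reduce the $k$-wise intersection sum to powers of a single Gram-type operator, using the rigid geometry of antichains of dyadic rectangles. The $L^2$ bound for that operator will then be the only place where sparseness is used.

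\emph{Step 1: structure of intersecting subfamilies.} Write each $R\in\mathcal{E}$ as $I_R\times J_R$. Suppose two distinct members of the antichain intersect. Their first sides are nested dyadic intervals, and so are their second sides. If $I_R=I_S$, the second sides would be nested and one rectangle would contain the other, which is impossible. The same holds for the second sides. So they are \emph{crossing}: one has strictly larger first side and strictly smaller second side.

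Consequently, a $k$-subfamily $\mathcal{F}$ with $\bigcap_{R\in\mathcal{F}}R\neq\emptyset$ can be ordered as a chain $R_1,\dots,R_k$. In this order $I_{R_1}\supsetneq\cdots\supsetneq I_{R_k}$ and $J_{R_1}\subsetneq\cdots\subsetneq J_{R_k}$, and
\begin{align*}
  \Big|\bigcap_{i} R_i\Big| = |I_{R_k}|\,|J_{R_1}| = |R_1\cap R_k|.
\end{align*}

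\emph{Step 2: telescoping into a matrix power.} Define the Gram matrix $G_{RS}=|R\cap S|/\sqrt{|R||S|}$, and let $G^{+}$ be its restriction to ordered crossing pairs (first side of $R$ strictly larger than that of $S$). For a chain as above, $|R_i\cap R_{i+1}|=|I_{R_{i+1}}|\,|J_{R_i}|$. A direct telescoping computation then gives
\begin{align*}
  |R_1\cap R_k| = \sqrt{|R_1|\,|R_k|}\;\prod_{i=1}^{k-1} G^{+}_{R_iR_{i+1}}.
\end{align*}
Each intersecting $k$-subset appears exactly once as such an ordered chain. All entries are nonnegative, so summing over chains and dropping the chain constraint only increases the sum. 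With $v_R=\sqrt{|R|}$ this yields
\begin{align*}
  \sum_{\#\mathcal{F}=k}\Big|\bigcap_{R\in\mathcal F}R\Big|
  \le \langle (G^{+})^{k-1}v,v\rangle
  \le \|G^{+}\|_{\ell^2\to\ell^2}^{k-1}\sum_{R}|R|
  \le 2\,\|G^{+}\|^{k-1}\,|\sh(\mathcal{E})|.
\end{align*}
The last inequality is $\frac12$-sparseness: $\sum|R|\le 2\sum|E(R)|\le 2|\sh(\mathcal E)|$. Since $0\le G^+_{RS}\le G_{RS}$ entrywise and $G$ is symmetric, it suffices to show $\|G\|\le A$ with $A$ absolute. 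Equivalently,
\begin{align*}
  \Big\|\sum_R a_R|R|^{-1/2}\ind_R\Big\|_{L^2}^2\le A\sum_R a_R^2
\end{align*}
for every sparse antichain. This gives the theorem with $C=\max(2,A)$.

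\emph{Step 3 (main obstacle): the $L^2$ Gram estimate.} A naive Schur test fails. Fix $R$ and group the crossing $S$ by their shape relative to $R$: $|I_S|=2^{a}|I_R|$ and $|J_S|=2^{-j}|J_R|$. The $2^j$ such $S$ meeting $R$ are disjoint horizontal strips with $G_{RS}=2^{-(a+j)/2}$, and the row sums $\sum_{a,j}2^{(j-a)/2}$ diverge. So sparseness must enter essentially.

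I would first try the following. Use the disjoint sets $E(S)$ to replace $|S|^{-1/2}\ind_S$ by a quantity controlled on $E(S)$, in the spirit of a Carleson embedding. By dyadic structure, on $R$ the functions $\sum_{S\text{ wider}}a_S|S|^{-1/2}\ind_S$ depend only on $y$. I would therefore bound the bilinear form by a one-dimensional dyadic maximal function in $y$ applied to a function built from the $a_S$ on the sets $E(S)$. The symmetric piece would be handled with a maximal function in $x$, as in Córdoba's argument sketched above. The antichain property ensures that each pair is counted in exactly one of these two one-parameter pieces.

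The crux is to show that sparseness of the whole family makes the relevant strip families Carleson in the one-dimensional sense, with constants uniform in the shape parameters. If this direct route stalls, my fallback is to prove a weighted Schur test with weights $w_R=|R|^{1/2}\,\langle\exp(\epsilon h_{\mathcal E})\rangle_R^{-1}$-type corrections.
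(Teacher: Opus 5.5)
Your Steps 1 and 2 are correct and coincide with the paper's reduction: the crossing structure of intersecting members, the telescoping identity of Lemma \ref{chain_lemma}, and the domination of the $k$-wise sum by $\langle P^{k-1}v,v\rangle$ from Lemma \ref{bound_by_bilinear}. Using the one-sided matrix $G^{+}$ and then dominating it entrywise by $G$ is a harmless variant, since for nonnegative matrices $\|G^{+}u\|\le\|G|u|\|$.

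The genuine gap is Step 3. The bound $\|G\|_{\ell^2\to\ell^2}\le A$ is the only analytic input of the whole argument, and you do not establish it. You describe a route through one-dimensional maximal functions whose central claim is left unverified: that sparseness makes the strip families Carleson uniformly in the shape parameters. You then offer a fallback weighted Schur test. As written, the proof is incomplete at exactly the point where sparseness has to be used.

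The missing idea is that this bound is immediate once you pass to the adjoint. It needs neither the antichain property nor any splitting into one-parameter pieces. Set $e_R=|R|^{-1/2}\ind_R$, so that $G_{RS}=\langle e_R,e_S\rangle$. Duality and Cauchy--Schwarz give
\begin{align*}
  \Big\|\sum_R a_R e_R\Big\|_{L^2}
  \le \|a\|_{\ell^2}\sup_{\|f\|_{L^2}=1}\Big(\sum_R \langle f,e_R\rangle^2\Big)^{1/2}.
\end{align*}
The sets $E(R)$ are pairwise disjoint, $|R|\le 2|E(R)|$, and $\langle |f|\rangle_R\le \mathcal{M}_{\mathcal{D}_2}f$ on $E(R)\subseteq R$. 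Hence
\begin{align*}
  \sum_R \langle f,e_R\rangle^2=\sum_R \langle f\rangle_R^2|R|
  \le 2\sum_R \langle |f|\rangle_R^2|E(R)|
  \le 2\int_{\mathbb{R}^2}(\mathcal{M}_{\mathcal{D}_2}f)^2
  \le 32\|f\|_{L^2}^2,
\end{align*}
where the last step is the $L^2$ bound \eqref{doob}. So $A=32$ works; this is Proposition \ref{gram}.

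Your observation that an unweighted Schur test fails is a symptom of estimating $G$ entry by entry. The Gram structure lets you avoid that entirely. The antichain hypothesis is needed only for the chain factorization in Steps 1--2, not for the norm bound.
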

In Section \ref{sec:zygmund_reductions} we show that Conjecture
\ref{conjecture:antichain_exp_int} is equivalent to the following more geometric
one.
\begin{conjecture} \label{conjecture:kwise}
  For every dimension $d \geq 2$ there exists a finite positive constant $C$
  such that
  for every $\frac{1}{2}$-sparse antichain $\mathcal{E}$ of $d$-dimensional
  dyadic rectangles and every $k \geq 1$
  \begin{align*}
    \sum_{\substack{\mathcal{F} \subseteq \mathcal{E} \\
      \#(\mathcal{F}) = k
    }} \Big|
      \bigcap_{R \in \mathcal{F}} R
    \Big|
    \leq C^k (k!)^{d-2} |\sh(\mathcal{E})|.
  \end{align*}
\end{conjecture}

In particular, in Section \ref{sec:zygmund_reductions}, we show.
\begin{theorem} \label{conjectures_equivalent}
  For every fixed dimension $d$, conjectures \ref{conjecture:antichain_exp_int}
  and \ref{conjecture:kwise} are equivalent, and they both imply Conjecture
  \ref{zygmund_conjecture} in dimension $d+1$.
\end{theorem}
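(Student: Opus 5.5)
The plan is to pass between the two conjectures through the moment identity linking $k$-wise intersection sums to powers of the height function. For any finite family $\mathcal{E}$ and every $x$, the number of $k$-element subfamilies containing $x$ is $\binom{h_{\mathcal{E}}(x)}{k}$. Integrating over $x$ gives
\begin{align*}
  S_k(\mathcal{E}) := \sum_{\substack{\mathcal{F} \subseteq \mathcal{E} \\ \#(\mathcal{F}) = k}} \Big| \bigcap_{R \in \mathcal{F}} R \Big|
  = \int_{\sh(\mathcal{E})} \binom{h_{\mathcal{E}}}{k}.
\end{align*}
So Conjecture \ref{conjecture:kwise} is the statement $\int_{\sh(\mathcal{E})} \binom{h}{k} \leq C^k (k!)^{d-2} |\sh(\mathcal{E})|$, and both conjectures are estimates on the distribution of $h = h_{\mathcal{E}}$ relative to the probability measure $\mu = |\sh(\mathcal{E})|^{-1} dx$ on the shadow. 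By monotone convergence we may assume $\mathcal{E}$ is finite, since subfamilies of sparse antichains remain sparse antichains.

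For Conjecture \ref{conjecture:antichain_exp_int} $\Rightarrow$ Conjecture \ref{conjecture:kwise}, I use $\binom{h}{k} \leq h^k/k!$. Writing $h^k = (h^{1/(d-1)})^{(d-1)k}$ and using $t^m \leq m!\, c^{-m} e^{ct}$ with $m = (d-1)k$, the hypothesis gives
\begin{align*}
  \int h^k \, d\mu \leq ((d-1)k)!\, c^{-(d-1)k} C.
\end{align*}
Since $((d-1)k)! \leq (d-1)^{(d-1)k} (k!)^{d-1}$, dividing by $k!$ yields $C'^k (k!)^{d-2}$. The case $k$ where $C \le C'^k$ is handled by enlarging the constant.

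For the converse, I expand $\exp(c h^{1/(d-1)})$. The cleanest route is through level sets. From the $k$-wise bound and $\binom{h}{k} \geq (h/k)^k/e^{0}$-type estimates, more precisely $\binom{h}{k} \geq (h/(2k))^k$ for $h \geq 2k$, Chebyshev gives
\begin{align*}
  \mu(h \geq 2k\lambda) \leq \lambda^{-k} C^k (k!)^{d-2} \lesssim (C k^{d-2}/\lambda)^k .
\end{align*}
Given a level $s$, I choose $k \approx c_0 s^{1/(d-1)}$ with $\lambda = s/(2k) \approx s^{(d-2)/(d-1)}/c_0$. Then $Ck^{d-2}/\lambda \approx C c_0^{d-1} \leq 1/2$ for small $c_0$, which gives $\mu(h \geq s) \leq 2^{-c_0 s^{1/(d-1)}}$. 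Integrating the layer-cake formula for $\exp(c h^{1/(d-1)})$ with $c$ much smaller than $c_0 \log 2$ then gives a bounded constant. The case $d=2$ is the same computation with $(k!)^0 = 1$. The only care needed is choosing $k \geq 1$ an integer and handling small $s$ trivially.

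For the implication to Conjecture \ref{zygmund_conjecture} in dimension $d+1$, I repeat verbatim the three-dimensional argument given before the statement of Theorem \ref{slices_are_sparse_antichains}. Given finite $\mathcal{F} \subseteq \mathcal{Z}_{d+1}$, Theorem \ref{CF:Selection} produces a \eqref{p2} subfamily $\mathcal{G}$ of comparable shadow. Theorem \ref{slices_are_sparse_antichains} makes each slice $S_{d+1}^t(\mathcal{G})$ a $\frac12$-sparse antichain of $d$-dimensional dyadic rectangles, with $h_{\mathcal{G}}(x,t) = h_{S^t_{d+1}(\mathcal{G})}(x)$. Conjecture \ref{conjecture:antichain_exp_int} applied on each slice, followed by Fubini in $t$, gives
\begin{align*}
  \int_{\sh(\mathcal{G})} \exp(c h_{\mathcal{G}}^{1/(d-1)}) \lesssim |\sh(\mathcal{G})|.
\end{align*}
Theorem \ref{CF:Weak-type|SparseExpInt|Covering} (b) with $k = d-1 \geq 1$ then yields weak-type $\psi_{d-1}(L)$, which is the conjectured bound $\psi_{(d+1)-2}$.

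The main obstacle is bookkeeping rather than any single step. One point is getting the converse direction's optimization of $k$ against $s$ right, since the factorial power must match $h^{1/(d-1)}$ exactly. The other is checking that the constants produced in each direction depend only on $d$.
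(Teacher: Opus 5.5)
Your proposal is correct and follows the paper's route. The equivalence is the argument of Theorem~\ref{equiv:theorem} with $n=d-1$: the identity $\sum_{\#\mathcal{F}=k}\big|\bigcap_{R\in\mathcal{F}}R\big|=\int_{\sh(\mathcal{E})}\binom{h_{\mathcal{E}}}{k}$, then $e^t\ge t^{m}/m!$ with the multinomial bound in one direction, and Chebyshev on $\binom{h_{\mathcal{E}}}{k}$ with $k\approx\lambda^{1/(d-1)}$ followed by layer-cake in the other. The Zygmund implication is the same selection--slicing--Fubini argument, except that the paper's formal proof inlines Theorem~\ref{CF:Weak-type|SparseExpInt|Covering}(b) instead of citing it: it bounds $|\Omega|\le C_1\int f\,h_{\mathcal{G}}$ and uses Young's inequality $ab\le\psi_{d-1}(a)+\exp(d\,b^{1/(d-1)})$ to absorb the exponential term into $\tfrac12|\Omega|$.
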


\subsection{Structure of the paper}

Section \ref{sec:zygmund_reductions} contains the results connecting height
functions of sparse antichains to Zygmund's conjecture.
We first prove Theorem \ref{equiv:theorem}, a quantitative equivalence between
exponential-integrability estimates and $k$-wise intersection bounds, which
contains the equivalence of Conjectures \ref{conjecture:antichain_exp_int} and
\ref{conjecture:kwise} as the case $n = d-1$.
We then reprove the Córdoba-Fefferman selection theorem, Theorem
\ref{zygmund:Selection} with a variational argument.
For completeness, we also prove the slicing lemma, Lemma \ref{CF:slices_of_p2},
and deduce Theorem \ref{slices_are_sparse_antichains}.
Combining these results, we show that Conjecture
\ref{conjecture:antichain_exp_int} in dimension $d$ implies Conjecture
\ref{zygmund_conjecture} in dimension $d+1$, completing the proof of Theorem
\ref{conjectures_equivalent}.

Section \ref{sec:heights} contains the proof of Theorem \ref{MT:kwise}, which is
a particular case of the more general Theorem \ref{kwise:precise}.
We first show that the rectangles of a two-dimensional dyadic antichain
passing through a common point are totally ordered by eccentricity, and that
the measure of their intersection factors through a telescoping product of
$L^2$-normalized correlations $P(R,S)$ (Lemma \ref{chain_lemma}).
This factorization dominates the $k$-wise sums by the quadratic form
$\langle P^{k-1}v, v \rangle$, where $P$ is the Gram matrix of the
$L^2$-normalized indicators of the collection
(Lemma \ref{bound_by_bilinear}), and the sparseness of the family
bounds the operator norm of $P$ (Proposition \ref{gram}).
Theorem \ref{MT:ExpInt} then follows from Theorem \ref{kwise:precise} through
the equivalence of Theorem \ref{equiv:theorem}.
In Subsection \ref{sec:heights:sharp} we collect two examples: the family of
dyadic rectangles anchored at a common corner, which shows that the antichain
condition cannot be dropped, and the simplex construction, Theorem
\ref{sharpness:simplex}, which shows that the exponents in Theorem
\ref{MT:ExpInt} and Conjecture \ref{conjecture:antichain_exp_int} cannot be
improved.

Section \ref{sec:strong_type} is devoted to the proof of Theorem
\ref{MT:SharpLpForAntichainM}; it begins with an example showing that the
growth $p'$ of the operator norm is sharp.
To remove the sparseness assumption, we use a sparse domination theorem from
\cite{CLR2026} together with the boundedness of the maximal median operator.
This reduces the problem to the linearized operator
$\mathcal{A}_{\mathcal{E}}$ associated to a sparse antichain.
We then prove an $L^p$ restricted strong-type result for
$\mathcal{A}_{\mathcal{E}}$
using Theorem \ref{MT:ExpInt}.
Finally, the strong operator norm then follows by an interpolation theorem of
Stein-Weiss type.
The classical arguments do not track the dependence of the constants on the
exponent $p$.
Since, in our setting, this dependence is critical, we provide a full proof in
Appendix \ref{app:interpolation}.

\section{Sparse dyadic antichains and Zygmund's conjecture}
\label{sec:zygmund_reductions}

We begin by showing that Conjectures \ref{conjecture:antichain_exp_int} and
\ref{conjecture:kwise} are equivalent.
In fact, we can prove the following theorem, which is more precise and valid for
general exponents.
In particular, the equivalence part of Theorem \ref{conjectures_equivalent} is
the following theorem with $n = d-1$.
\begin{theorem}
  \label{equiv:theorem}
  Let $\mathcal{E}$ be a finite collection of sets with finite measure, and let
  $n \geq 1$ be an integer.

  Then the following two statements are equivalent, with the constants in each
  depending only on $n$ and on the constants in the other.
  \begin{enumerate}[label=(\alph*), ref=(\alph*)]
    \item \label{equiv:1}
      There exist finite positive constants $c$ and $C$  such that
      \begin{align*}
        \int_{\sh(\mathcal{E})} \exp(c h_{\mathcal{E}}^{\frac{1}{n}}) \leq
        C |\sh(\mathcal{E})|
      \end{align*}

    \item \label{equiv:2}
      There exists a finite positive constant $B$ such that for all $k \geq 1$
      \begin{align*}
        \sum_{\substack{\mathcal{F} \subseteq \mathcal{E} \\
        \#(\mathcal{F}) = k
        }} \Big|
        \bigcap_{R \in \mathcal{F}} R
        \Big|
        \leq B^k (k!)^{n-1} |\sh(\mathcal{E})|.
      \end{align*}
  \end{enumerate}
\end{theorem}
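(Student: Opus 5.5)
The key identity is that the $k$-wise intersection sum is the integral of a binomial coefficient of the height function:
\begin{align*}
  S_k := \sum_{\substack{\mathcal{F} \subseteq \mathcal{E} \\ \#(\mathcal{F}) = k}}
  \Big| \bigcap_{R \in \mathcal{F}} R \Big|
  = \int_{\sh(\mathcal{E})} \binom{h_{\mathcal{E}}}{k}.
\end{align*}
This holds because a point $x$ with $h_{\mathcal{E}}(x) = m$ lies in $\bigcap_{R\in\mathcal{F}} R$ for exactly $\binom{m}{k}$ subfamilies $\mathcal{F}$ of size $k$. Both directions then become statements comparing $\int \binom{h}{k}$ with $\int \exp(c h^{1/n})$, using only the distribution function of $h = h_{\mathcal{E}}$ on $\sh(\mathcal{E})$. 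Finiteness of $\mathcal{E}$ guarantees that everything converges.

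\emph{(a) implies (b).} Write $h = h_{\mathcal{E}}$. Since $\binom{h}{k} \le h^k/k!$, it suffices to bound $\int_{\sh(\mathcal{E})} h^k$. I use the pointwise inequality
\begin{align*}
  \frac{(c h^{1/n})^{nk}}{(nk)!} \le \exp(c h^{1/n}),
\end{align*}
which gives $h^k \le c^{-nk}(nk)!\,\exp(ch^{1/n})$. Integrating and applying (a),
\begin{align*}
  S_k \le \frac{(nk)!}{k!\, c^{nk}}\, C\, |\sh(\mathcal{E})|.
\end{align*}
By Stirling, $(nk)! \le (nk)^{nk} \le n^{nk} e^{nk} (k!)^n$, so $(nk)!/k! \le (n e)^{nk} (k!)^{n-1}$. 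This yields (b) with $B = C\,(ne/c)^n$, using $C \le C^k$ when $C \ge 1$.

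\emph{(b) implies (a).} Expand the exponential into pieces controlled by the $S_k$. Pointwise, on $\{h \ge 1\}$, I bound $\exp(c h^{1/n})$ by a series in the quantities $\binom{h}{k}/(k!)^{n-1}$. I group the Taylor series of $e^{y}$, with $y = ch^{1/n}$, in blocks of $n$ consecutive terms. For $j$ in the block $nk \le j < n(k+1)$ we have
\begin{align*}
  \frac{y^j}{j!} \lesssim_n \frac{c^j\, h^{k+1}}{(nk)!},
\end{align*}
after handling the cases $h \ge 1$ separately, and $(nk)! \ge (k!)^n$. So it suffices to show
\begin{align*}
  \sum_k \frac{\tilde c^{\,k}\, h^{k}}{(k!)^{n}} \lesssim \sum_k \frac{\tilde c'^{\,k}}{(k!)^{n-1}} \binom{h}{k} + 1.
\end{align*}
There is a mismatch between $h^k/k!$ and $\binom{h}{k}$ when $k$ is comparable to $h$. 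I handle it with the Stirling-number expansion $h^k = \sum_{j\le k} S(k,j)\, j!\binom{h}{j}$, or more crudely by splitting according to $k \le h/2$, where $\binom{h}{k} \ge h^k/(2^k k!)$, and $k > h/2$, where $h^k/(k!)^n \le (2e)^k/(k!)^{n-1}$ is summable and bounded by an absolute constant times $\ind_{\sh}$.

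Integrating and using (b), the total is
\begin{align*}
  \lesssim \sum_k \tilde c'^{\,k} B^k\, |\sh(\mathcal{E})| + |\sh(\mathcal{E})|,
\end{align*}
which is finite once $c$ is chosen small in terms of $B$ and $n$.

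The main obstacle is this second direction, where the bookkeeping must convert $\exp(ch^{1/n})$ into a positive combination of $\binom{h}{k}/(k!)^{n-1}$. My first attempt is the crude splitting at $k \approx h/2$ described above. If that leaks constants, I will instead use the generating identity $\sum_k \binom{h}{k} t^k = (1+t)^h$ together with the bound $\exp(ch^{1/n}) \lesssim \sum_k h^k/(C k)^{nk}$.
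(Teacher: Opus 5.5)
Your proof is correct. Direction (a)$\Rightarrow$(b) is the paper's argument. You bound $(nk)!/(k!)^n$ via $k^k\le e^k k!$ rather than by the multinomial bound $n^{nk}$, which only costs a factor $e^n$ in $B$. You need $C\ge 1$ there, but (a) forces this anyway.

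For (b)$\Rightarrow$(a) you take a genuinely different route.

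\emph{The paper's route.} It proves a distributional tail bound. On $\{h_{\mathcal{E}}\ge\lambda\}$ it uses $\binom{h_{\mathcal{E}}}{k}\ge(\lambda/2)^k/k!$ together with Chebyshev. It then chooses $k=\lfloor\theta\lambda^{1/n}\rfloor$ to get $|h_{\mathcal{E}}\ge\lambda|\lesssim e^{-2c\lambda^{1/n}}|\sh(\mathcal{E})|$ for $\lambda\ge\lambda_0$, and finishes with the layer-cake formula.

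\emph{Your route.} You dominate $\exp(ch_{\mathcal{E}}^{1/n})$ pointwise on $\sh(\mathcal{E})$ by a constant plus the positive combination $\sum_k(\tilde c')^k\binom{h_{\mathcal{E}}}{k}/(k!)^{n-1}$, and then integrate term by term. The split at $k\approx h/2$ does close:
\begin{itemize}
\item For $k\le h/2$ you have $\binom{h}{k}\ge(h/2)^k/k!$.
\item For $k>h/2$ the term is at most $(2e\tilde c)^k/(k!)^{n-1}$, since $(2k)^k\le(2e)^kk!$. This is summable once $\tilde c<1/(2e)$, and that smallness is genuinely needed when $n=1$.
\end{itemize}
Two bookkeeping points should be stated rather than glossed. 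First, the block grouping produces $h^{k+1}/(k!)^n$, not $h^k/(k!)^n$. Rewriting it as $(k+1)^n h^{k+1}/((k+1)!)^n$ with $(k+1)^n\le 2^{n(k+1)}$ absorbs the shift into the geometric factor. Second, since $h_{\mathcal{E}}$ is integer valued, $h_{\mathcal{E}}\ge 1$ on all of $\sh(\mathcal{E})$, so no separate case distinction is needed.

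\emph{What each buys.} The paper's argument yields the sub-exponential tail estimate as a by-product, and it makes transparent which single $k$ controls each level $\lambda$. Yours avoids the $\lambda$-dependent choice of $k$ and the threshold $\lambda_0$, replacing them with one pointwise inequality at the cost of the block-grouping bookkeeping.
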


We will use the following convention, here and in the rest of the article:
constants introduced within a proof are \emph{local} to it, and the same letter
may denote a different value in another proof.
When a proof uses a constant provided by another result, we say so explicitly.
\begin{proof}
  Both directions of the equivalence rely on the following elementary identity:
  \begin{align} \label{equiv:kwise_as_binomial}
    \sum_{\substack{\mathcal{F} \subseteq \mathcal{E} \\
      \#(\mathcal{F}) = k
    }} \Big|
      \bigcap_{R \in \mathcal{F}} R
    \Big|
    = \int_{\sh(\mathcal{E})} \binom{h_{\mathcal{E}}}{k}.
  \end{align}

  We begin by proving that \ref{equiv:1} $\implies$ \ref{equiv:2} for all
  $k \geq 1$.
  Expanding the exponential as an infinite series, for every integer $k \geq 1$
  and every real number $t \geq 0$ we have
  \begin{align*}
    e^t \geq \frac{t^{nk}}{(nk)!}.
  \end{align*}
  In particular, setting $t = c h_{\mathcal{E}}^{\frac{1}{n}}$,
  integrating, and using \ref{equiv:1},  we obtain
  \begin{align*}
    \int_{\sh(\mathcal{E})} h_{\mathcal{E}}^k
    \leq \frac{(nk)!}{c^{nk}} \int_{\sh(\mathcal{E})} \exp\big(
      c h_{\mathcal{E}}^{\frac{1}{n}} \big)
    \leq \frac{(nk)!}{c^{nk}} C |\sh(\mathcal{E})|.
  \end{align*}

  By the multinomial theorem,
  \begin{align} \label{equiv:multinomial}
    \frac{(nk)!}{(k!)^n} = \binom{nk}{
      \underbrace{k, \dots, k}_{\text{$n$ times}}} \leq n^{nk},
  \end{align}
  Using $\binom{h_{\mathcal{E}}}{k} \leq \frac{h_{\mathcal{E}}^k}{k!}$,
  \eqref{equiv:kwise_as_binomial}, and \eqref{equiv:multinomial}, we conclude
  \begin{align*}
    \sum_{\substack{\mathcal{F} \subseteq \mathcal{E} \\
      \#(\mathcal{F}) = k
    }} \Big|
      \bigcap_{R \in \mathcal{F}} R
    \Big|
    \leq \frac{(nk)!}{k! \, c^{nk}} C |\sh(\mathcal{E})|
    \leq C \frac{n^{nk} (k!)^{n-1}}{c^{nk}} |\sh(\mathcal{E})|
    = C \bigg( \frac{n^n}{c^n} \bigg)^k (k!)^{n-1} |\sh(\mathcal{E})|.
  \end{align*}
  Thus, \ref{equiv:2} holds with constant
  \begin{align*}
    B = \frac{\max(C,1)n^n}{c^n}.
  \end{align*}

  We now prove \ref{equiv:2} $\implies$ \ref{equiv:1}.
  Let $\Omega = \{h_{\mathcal{E}} \geq \lambda\}$ and let $k$ be an integer
  satisfying $1 \leq k \leq \frac{\lambda}{2}$.
  We have
  \begin{align*}
    \binom{h_{\mathcal{E}}(x)}{k} \geq \frac{(\lambda - k)^k}{k!}
    \geq \frac{(\frac{\lambda}{2})^k}{k!} \qquad \text{for all $x \in \Omega$.}
  \end{align*}
  So, by Chebyshev's inequality and $k! \leq k^k$
  \begin{align*}
    |\Omega| = |h_{\mathcal{E}} \geq \lambda| &\leq
    \bigg(\frac{2}{\lambda}\bigg)^k k^k \int_{\Omega}
    \binom{h_{\mathcal{E}}}{k}.
  \end{align*}
  By \eqref{equiv:kwise_as_binomial}, \ref{equiv:2}, and $k! \leq k^k$ again, we
  arrive at
  \begin{align*}
    |\Omega| \leq \bigg(\frac{2k}{\lambda} \bigg)^k
    \sum_{\substack{\mathcal{F} \subseteq \mathcal{E} \\
      \#(\mathcal{F}) = k
    }} \Big| \bigcap_{R \in \mathcal{F}} R \Big|
    \leq \bigg(\frac{2 C_1 k}{\lambda} \bigg)^k (k!)^{n-1} |\sh(\mathcal{E})|
    &\leq \bigg(\frac{2 C_1 k^n}{\lambda} \bigg)^k |\sh(\mathcal{E})|
  \end{align*}
  where we have defined $C_1 = \max(1,B)$.

  Set $\theta := e^{-1}(2C_1)^{-\frac{1}{n}}$ and take
  $k = \lfloor \theta \lambda^{\frac{1}{n}} \rfloor$, then
  \begin{align*}
    \frac{2 C_1 k^n}{\lambda} \leq e^{-n}.
  \end{align*}
  Therefore, with this choice of $k$, and assuming $1 \leq k \leq
  \frac{\lambda}{2}$
  \begin{align*}
    |\Omega| &\leq e^{-nk} |\sh(\mathcal{E})| \\
    &\leq e^{n} \exp\Big(
      -n\theta \lambda^{\frac{1}{n}}\Big) |\sh(\mathcal{E})|.
  \end{align*}
  The constraint $1 \leq k \leq \frac{\lambda}{2}$, with this choice of $k$,
  holds for $\lambda \geq \lambda_0$, where $\lambda_0$ depends only on $C_1$
  and $n$.
  Therefore, for some finite positive constants $C_2, c_3$ that depend only on
  $C_1$ and $n$
  \begin{align*}
    |h_{\mathcal{E}} \geq \lambda| \leq C_2 \exp(-2c_3
    \lambda^{\frac{1}{n}})|\sh(\mathcal{E})|.
  \end{align*}

  The claim now follows from the layer-cake formula. Setting $c = c_3$
  \begin{align*}
    \int_{\sh(\mathcal{E})} \exp\Big(c h_{\mathcal{E}}^{\frac{1}{n}} \Big)
    &= |\sh(\mathcal{E})| + c \int_0^\infty
      |h_{\mathcal{E}} > \lambda^n| e^{c \lambda} \, d\lambda \\
    &\leq |\sh(\mathcal{E})| + C_2 c|\sh(\mathcal{E})|
      \int_0^\infty e^{-c\lambda} \, d\lambda \\
    &\leq C|\sh(\mathcal{E})|,
  \end{align*}
  for some constant $C$ depending only on $B$ and $n$.
\end{proof}

Having shown that Conjectures \ref{conjecture:antichain_exp_int} and
\ref{conjecture:kwise} are equivalent, we now turn to showing that they both
imply the dyadic case of Zygmund's conjecture, Conjecture
\ref{zygmund_conjecture}.

We begin by recalling some results, originally proved by Córdoba and Fefferman
in \cite{CordobaFefferman1975}.
Since we use a slightly different language, and to make the article as
self-contained as possible, we shall give complete proofs.

The first result states that any countable collection of rectangles has a
finite \eqref{p2} subcollection whose shadow has comparable measure.
This is Theorem \ref{CF:Selection} from the introduction.
The original proof can be found in \cite{CordobaFefferman1975}.
We give an alternative variational proof, which we believe could be of
independent interest.
It is inspired by the proof of the equivalence between Carleson and sparse
collections from Theorem 3.3 in \cite{Barron2019}.
\begin{theorem}
  \label{zygmund:Selection}
  For every countable collection $\mathcal{F} \subseteq \mathcal{R}_d$ with
  $|\sh(\mathcal{F})| < \infty$ there
  exists a finite subcollection $\mathcal{G} \subseteq \mathcal{F}$ satisfying
  the \eqref{p2} condition and
  $|\sh(\mathcal{G})| \asymp |\sh(\mathcal{F})|$.
\end{theorem}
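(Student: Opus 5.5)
First I reduce to a finite family, then run a variational selection: maximize a penalized shadow over subfamilies and read off \eqref{p2} from the maximizer. Since $|\sh(\mathcal{F})|<\infty$ and $\mathcal{F}$ is countable, continuity of measure gives a finite $\mathcal{F}_0\subseteq\mathcal{F}$ with $|\sh(\mathcal{F}_0)|\geq \frac12|\sh(\mathcal{F})|$. It therefore suffices to treat finite $\mathcal{F}$. For $\mathcal{G}\subseteq\mathcal{F}$ consider the functional
\begin{align*}
  \Lambda(\mathcal{G}) := |\sh(\mathcal{G})| - \alpha\sum_{R\in\mathcal{G}}\Big(|R| - 2\,|R\setminus \sh(\mathcal{G}\setminus\{R\})|\Big),
\end{align*}
or, in a cleaner variant, $\Lambda(\mathcal{G}) = 2|\sh(\mathcal{G})| - \beta\sum_{R\in\mathcal{G}}|R|$. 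Since $\mathcal{F}$ is finite, $\Lambda$ attains its maximum at some $\mathcal{G}^*$; among maximizers I take one of minimal cardinality.

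The \eqref{p2} property should come from removal-stability of the maximizer. Take $\Lambda(\mathcal{G})=|\sh(\mathcal{G})|-\tfrac14\sum_{R\in\mathcal{G}}|R|$. Removing $R$ from $\mathcal{G}^*$ changes the shadow by $|R\setminus\sh(\mathcal{G}^*\setminus\{R\})|$ and the penalty by $\tfrac14|R|$. Maximality together with minimal cardinality then forces $|R\setminus \sh(\mathcal{G}^*\setminus\{R\})|>\tfrac14|R|$. This gives only a $\tfrac34$ version of \eqref{p2}, so I would tune the constant to $\tfrac12$ so that the strict inequality yields $|R\cap\sh(\mathcal{G}^*\setminus\{R\})|<\tfrac12|R|$.

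Comparability of shadows should come from addition-stability. Adding a rectangle $R\in\mathcal{F}\setminus\mathcal{G}^*$ does not increase $\Lambda$, so $|R\setminus\sh(\mathcal{G}^*)|\leq\tfrac12|R|$. Equivalently, $|R\cap \sh(\mathcal{G}^*)|\geq \tfrac12|R|$, i.e.\ $R\subseteq\{\mathcal{M}_{\mathcal{R}_d}\ind_{\sh(\mathcal{G}^*)}\geq\tfrac12\}$. Hence $\sh(\mathcal{F})$ is contained in this level set union $\sh(\mathcal{G}^*)$.

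The main obstacle is bounding that level set by $C|\sh(\mathcal{G}^*)|$ without circularity. The strong maximal function is bounded on $L^2$, so $|\{\mathcal{M}_{\mathcal{R}_d}\ind_E\geq\frac12\}|\lesssim |E|$ by Chebyshev in $L^2$, using only the iterated one-dimensional maximal theorem and not the weak $L\log L$ bound. This gives $|\sh(\mathcal{F})|\lesssim|\sh(\mathcal{G}^*)|$, with the reverse inequality trivial. One must also check that the thresholds $\tfrac12$ in the removal and addition steps are simultaneously compatible with a single penalty weight: removal needs a weight $\alpha\le\tfrac12$ to get the $\tfrac12$ in \eqref{p2}, while addition only needs some positive fraction. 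I would therefore take $\Lambda(\mathcal{G})=|\sh(\mathcal{G})|-\tfrac12\sum|R|$ and use strict maximality via minimal cardinality for \eqref{p2}.
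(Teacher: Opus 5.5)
Your final choice $\Lambda(\mathcal{G}) = |\sh(\mathcal{G})| - \frac12\sum_{R\in\mathcal{G}}|R|$ is exactly half of the paper's functional $\Delta$. Your finite reduction, removal-stability, addition-stability and $L^2$ strong-maximal-function steps are the paper's proof, so the argument is correct and essentially identical; the minimal-cardinality refinement is unnecessary, since the non-strict inequality from maximality already gives \eqref{p2}.

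One small slip: with weight $\alpha$, removal gives $|R\cap\sh(\mathcal{G}^*\setminus\{R\})|\le(1-\alpha)|R|$, and addition gives $|S\cap\sh(\mathcal{G}^*)|\ge(1-\alpha)|S|$. The compatible range is therefore $\frac12\le\alpha<1$, not $\alpha\le\frac12$ as you wrote, though your choice $\alpha=\frac12$ lies in it.
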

\begin{proof}
  Since $|\sh(\mathcal{F})| < \infty$, by monotone convergence we can find
  a finite $\mathcal{F}' \subseteq \mathcal{F}$ with
  $|\sh(\mathcal{F}')| \geq \frac{1}{2}|\sh(\mathcal{F})|$, so we may assume
  that $\mathcal{F}$ itself is finite.

  Among all subcollections $\mathcal{G} \subseteq \mathcal{F}$, choose one
  maximizing the functional
  \begin{align*}
    \Delta(\mathcal{G}) := 2|\sh(\mathcal{G})| - \sum_{R \in \mathcal{G}} |R|.
  \end{align*}
  Fix $R \in \mathcal{G}$ and let $\mathcal{G}'$ be the family with $R$
  removed: $\mathcal{G}' = \mathcal{G} \setminus \{R\}$.

  Removing $R$ decreases the measure of the shadow by exactly
  $|R \setminus \sh(\mathcal{G}')|$, that is:
  \begin{align*}
    |\sh(\mathcal{G})| - |\sh(\mathcal{G}')| = |R \setminus \sh(\mathcal{G}')|.
  \end{align*}

  Thus, by maximality
  \begin{align*}
    0 \leq \Delta(\mathcal{G}) - \Delta(\mathcal{G}')
    = 2|R \setminus \sh(\mathcal{G}')| - |R|,
  \end{align*}
  which is precisely the \eqref{p2} condition.

  Next, let $S \in \mathcal{F} \setminus \mathcal{G}$.
  Adding $S$ to $\mathcal{G}$ increases the measure of the shadow by exactly
  $|S \setminus \sh(\mathcal{G})|$, so again by maximality,
  \begin{align*}
    0 \geq \Delta(\mathcal{G} \cup \{S\}) - \Delta(\mathcal{G})
    = 2|S \setminus \sh(\mathcal{G})| - |S|.
  \end{align*}
  In other words:
  \begin{align*}
    \frac{|S \cap \sh(\mathcal{G})|}{|S|} \geq \frac{1}{2}.
  \end{align*}
  Therefore, every $S \in \mathcal{F}$, selected or not, is contained in
  $\{\mathcal{M}_{\mathcal{R}_d} (\ind_{\sh(\mathcal{G})}) \geq \frac{1}{2}\}$,
  and the $L^2$-boundedness of the strong maximal function yields
  \begin{align*}
    |\sh(\mathcal{F})| \leq
    \big|\big\{\mathcal{M}_{\mathcal{R}_d} \ind_{\sh(\mathcal{G})} \geq
      \tfrac{1}{2}\big\}\big|
    \lesssim |\sh(\mathcal{G})|.
  \end{align*}
\end{proof}

The next lemma shows that every slice of a dyadic \eqref{p2} collection is
sparse.
This is Lemma \ref{CF:slices_of_p2} from the introduction.
The proof we give is essentially the one in \cite{CordobaFefferman1975}.
\begin{lemma}
  \label{zygmund:slices_of_p2}
  If $\mathcal{E}$ is \eqref{p2} and consists of dyadic rectangles in
  $\mathbb{R}^d$, then for every $t \in \mathbb{R}$ and every
  $i \in \{1, \dots, d\}$,
  the slice of $\mathcal{E}$, perpendicular to the $i$-th coordinate axis and at
  height $t$
  \begin{align*}
    S_i^t(\mathcal{E}) := \{\pi_i^*(R):\, t \in \pi_i(R)\},
  \end{align*}
  is $\frac{1}{2}$-sparse when seen as a collection of $(d-1)$-dimensional
  dyadic rectangles.

  Moreover, the projection $\pi_i^*$ is a bijection between
  $\{R \in \mathcal{E}:\, t \in \pi_i(R)\}$ and $S_i^t(\mathcal{E})$.
\end{lemma}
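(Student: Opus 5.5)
The plan is to use only two ingredients: the \eqref{p2} condition, and the fact that two dyadic intervals containing a common point $t$ are nested. Fix $t\in\mathbb{R}$ and $i$, and let $\mathcal{E}_t := \{R \in \mathcal{E} :\, t \in \pi_i(R)\}$. For any $R, S \in \mathcal{E}_t$, the dyadic intervals $\pi_i(R)$ and $\pi_i(S)$ both contain $t$, so one contains the other. This nesting drives both parts of the lemma.

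\emph{Bijection.} Surjectivity of $\pi_i^*\colon \mathcal{E}_t \to S_i^t(\mathcal{E})$ holds by definition, so only injectivity needs proof. Suppose $R \neq S$ in $\mathcal{E}_t$ satisfy $\pi_i^*(R) = \pi_i^*(S)$. By the nesting, we may assume $\pi_i(R) \subseteq \pi_i(S)$, which gives $R \subseteq S$. Then $R \subseteq \sh(\mathcal{E}\setminus\{R\})$, so
\begin{align*}
  |R \cap \sh(\mathcal{E}\setminus\{R\})| = |R| > \tfrac12 |R|,
\end{align*}
contradicting \eqref{p2}. (We treat $\mathcal{E}$ as a set, so distinct members are distinct rectangles.)

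\emph{Sparseness.} For $R\in\mathcal{E}_t$ write $Q_R = \pi_i^*(R)$ and $I_R = \pi_i(R)$. Let $\mathcal{B}(R)$ be the set of $S \in \mathcal{E}_t\setminus\{R\}$ with $I_S \supseteq I_R$, and define
\begin{align*}
  E(Q_R) := Q_R \setminus \bigcup_{S \in \mathcal{B}(R)} Q_S .
\end{align*}

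\emph{Disjointness.} Take $R\neq R'$ in $\mathcal{E}_t$. By the nesting, say $I_{R'} \supseteq I_R$. Then $R' \in \mathcal{B}(R)$, so $E(Q_R) \cap Q_{R'} = \emptyset$, and in particular $E(Q_R)\cap E(Q_{R'})=\emptyset$. The case $I_R = I_{R'}$ is covered by the same argument, since then each rectangle belongs to the other's $\mathcal{B}$.

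\emph{Measure bound.} This is the key step. For $S \in \mathcal{B}(R)$ we have $I_S \supseteq I_R$, so the intersection is a full cylinder over $I_R$:
\begin{align*}
  R \cap S = (Q_R \cap Q_S) \times I_R
\end{align*}
(after reordering coordinates). Taking the union over $\mathcal{B}(R)$ and using Fubini,
\begin{align*}
  \Big| Q_R \cap \bigcup_{S\in\mathcal{B}(R)} Q_S \Big|\, |I_R|
  = \Big| R \cap \bigcup_{S \in \mathcal{B}(R)} S \Big|
  \leq |R \cap \sh(\mathcal{E}\setminus\{R\})| \leq \tfrac12 |R|
  = \tfrac12 |Q_R|\,|I_R|,
\end{align*}
where the last inequality is \eqref{p2}. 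Dividing by $|I_R|$ gives $|E(Q_R)| \geq \frac12|Q_R|$. Combined with the bijection, this shows that $S_i^t(\mathcal{E})$ is $\frac12$-sparse.

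The main obstacle is choosing the right set of competitors to remove. If we removed the projections of all other slice members, the part of $R$ they cover need not form a full cylinder over $I_R$, and \eqref{p2} would not transfer to the slice. Restricting to those $S$ with $I_S \supseteq I_R$ makes $R\cap S$ a full cylinder, so the Fubini computation goes through. The nesting of dyadic intervals through $t$ is what still guarantees disjointness after this restriction.
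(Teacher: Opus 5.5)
Your proposal is correct and follows essentially the same route as the paper. The bijection comes from the nesting of dyadic intervals through $t$ together with \eqref{p2}; the paper phrases this as \eqref{p2} forcing an antichain. Sparseness comes from removing only the projections of those $S$ with $\pi_i(S)\supseteq\pi_i(R)$, so that the removed part is a full cylinder over $\pi_i(R)$ and \eqref{p2} transfers to the slice. Your $E(Q_R)$ is exactly the paper's $F(R)$, where the paper writes $E_d(R)=F(R)\times\pi_d(R)$.
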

\begin{proof}
  Fix a \eqref{p2} family $\mathcal{E}$ consisting of $d$-dimensional dyadic
  rectangles. We will assume, without loss of generality that $i=d$.
  Set $\mathcal{E}^t = \{R \in \mathcal{E}:\, t \in \pi_d(R)\}$.

  Note that the \eqref{p2} condition implies that $\mathcal{E}$ must be an
  antichain.
  This, in turn, forces the projection
  \begin{align*}
    \pi_d^* : \mathcal{E}^t \to S_d^t(\mathcal{E})
  \end{align*}
  to be a bijection between $\mathcal{E}^t$ and $S_d^t(\mathcal{E})$.
  Indeed, the collection $S_d^t(\mathcal{E})$ is defined precisely as the image
  of $\mathcal{E}^t$ through $\pi_d^*$, so we just need to show it is injective.
  To see this, suppose by contradiction that two different rectangles
  $R, S \in \mathcal{E}^t$
  have the same projection $\pi_d^*(R) = B = \pi_d^*(S)$.
  We then have
  \begin{align*}
    R = B \times \pi_d(R) \quad\text{and}\quad S = B \times \pi_d(S).
  \end{align*}
  Since $t \in \pi_d(R) \cap \pi_d(S)$ and the intervals are dyadic, we must
  have $\pi_d(R) \subseteq \pi_d(S)$ or $\pi_d(S) \subseteq \pi_d(R)$.
  But then, since $R$ and $S$ share the same base $B$, we must have
  \begin{align*}
    R \subseteq S \quad\text{or}\quad S \subseteq R.
  \end{align*}
  In either case, we reach a contradiction since the family $\mathcal{E}$ is an
  antichain.

  We now turn to the sparsity claim.
  We have just seen that, for every $\overline{R} \in S_d^t(\mathcal{E})$, there
  must exist a unique $R \in \mathcal{E}^t$ such that
  $\overline{R} = \pi_d^*(R)$. Define
  \begin{align*}
    E_d(R) := R \setminus \sh(\{S \in \mathcal{E}^t \setminus \{R\}:\,
      \pi_d(S) \supseteq \pi_d(R) \}).
  \end{align*}
  Note that, since the intervals are all dyadic, the set $E_d(R)$ factors as
  \begin{align*}
    E_d(R) = F(R) \times \pi_d(R)
  \end{align*}
  for some $F(R) \subseteq \overline{R}$.
  The \eqref{p2} condition implies that $|E_d(R)| \geq \frac{1}{2}|R|$, thus
  \begin{align*}
    |F(R)| \geq \frac{1}{2}|\overline{R}|.
  \end{align*}

  We set, as in the definition of $\frac{1}{2}$-sparse,
  $E(\overline{R}) := F(R)$.
  The only thing left to show is that the collection $\{E(\overline{R})\}$ is
  pairwise disjoint.

  To this end, let $\overline{R} \neq \overline{S}$ be any two distinct elements
  in $S_d^t(\mathcal{E})$, and let $R$ and $S$ be the unique rectangles in
  $\mathcal{E}^t$ defined by their projections $\pi_d^*(R) = \overline{R}$
  and $\pi_d^*(S) = \overline{S}$.
  Assume, without loss of generality, that $\pi_d(R) \subseteq \pi_d(S)$.
  Then, by the way we constructed the sets $E_d$, we have
  $E_d(R) \cap S = \emptyset$. Since $\pi_d(R) \subseteq \pi_d(S)$, this implies
  $F(R) \cap F(S) = \emptyset$, and this completes the proof.
\end{proof}

Theorem \ref{slices_are_sparse_antichains} is now an easy consequence of the
lemma above. We state the theorem again for convenience.
\begin{theorem}
  \label{zygmund:slices_are_sparse_antichains}
  If $\mathcal{E}$ is a \eqref{p2} subcollection of $\mathcal{Z}_{d+1}$, then
  every slice $S_{d+1}^t(\mathcal{E})$ perpendicular to the last coordinate axis
  is a $\frac{1}{2}$-sparse antichain of dyadic $d$-dimensional rectangles.
  Moreover, for every $t \in \mathbb{R}$ and $x \in \mathbb{R}^d$,  we have
  \begin{align} \label{zygmund:slices_are_sparse_antichains:eq}
    h_{\mathcal{E}}(x,t) = h_{S_{d+1}^t(\mathcal{E})}(x).
  \end{align}
\end{theorem}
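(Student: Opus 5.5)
We obtain sparsity directly from Lemma \ref{zygmund:slices_of_p2} with $i = d+1$. The new ingredients are the antichain property and the height identity \eqref{zygmund:slices_are_sparse_antichains:eq}.

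Fix $t$ and set $\mathcal{E}^t = \{R \in \mathcal{E}:\, t \in \pi_{d+1}(R)\}$. Lemma \ref{zygmund:slices_of_p2} gives two facts. First, $S_{d+1}^t(\mathcal{E})$ is a $\frac{1}{2}$-sparse collection of $d$-dimensional dyadic rectangles. Second, $\pi_{d+1}^*$ is a bijection from $\mathcal{E}^t$ onto $S_{d+1}^t(\mathcal{E})$.

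The height identity follows from this bijection. For $x \in \mathbb{R}^d$, a rectangle $R = \overline{R} \times K$ contains $(x,t)$ if and only if $R \in \mathcal{E}^t$ and $x \in \overline{R}$. Hence
\begin{align*}
  h_{\mathcal{E}}(x,t) = \#\{R \in \mathcal{E}^t:\, x \in \pi_{d+1}^*(R)\}
  = \#\{\overline{R} \in S_{d+1}^t(\mathcal{E}):\, x \in \overline{R}\}
  = h_{S_{d+1}^t(\mathcal{E})}(x).
\end{align*}
Injectivity guarantees that no element is counted twice on the right.

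The main step is the antichain property, and here we use the structure of $\mathcal{Z}_{d+1}$ and the monotonicity of $\Phi$. Suppose $\overline{R} \subseteq \overline{S}$ are distinct elements of the slice, with $R = \overline{R} \times K_R$ and $S = \overline{S} \times K_S$ in $\mathcal{E}^t$.
\begin{itemize}
  \item Write the sidelengths of $\overline{R}$ as $2^{m_1}, \dots, 2^{m_d}$ and those of $\overline{S}$ as $2^{n_1}, \dots, 2^{n_d}$. The inclusion $\overline{R} \subseteq \overline{S}$ gives $m_j \leq n_j$ for every $j$.
  \item Since $\Phi$ is monotone increasing in each argument, $|K_R| = 2^{\Phi(m)} \leq 2^{\Phi(n)} = |K_S|$.
  \item Both $K_R$ and $K_S$ are dyadic intervals containing $t$, so they are nested. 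Combined with the length comparison, this forces $K_R \subseteq K_S$.
  \item Therefore $R \subseteq S$.
\end{itemize}
On the other hand, the \eqref{p2} condition forces $\mathcal{E}$ to be an antichain: if $R \subsetneq S$ were both in $\mathcal{E}$, then $|R \cap \sh(\mathcal{E} \setminus \{R\})| = |R| > \frac{1}{2}|R|$, violating \eqref{p2}. So $R = S$, and hence $\overline{R} = \overline{S}$, a contradiction. We conclude that the slice is an antichain.

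This argument is elementary. The only real care needed is the nesting of dyadic intervals that share the point $t$, and the use of monotonicity of $\Phi$ in every coordinate at once.
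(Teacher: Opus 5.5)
Your proposal is correct and follows the paper's proof. Sparsity and the bijection come from Lemma \ref{zygmund:slices_of_p2}, the height identity follows from the bijection, and the antichain property comes from lifting $\overline{R} \subseteq \overline{S}$ to $R \subseteq S$ via monotonicity of $\Phi$, then using that \eqref{p2} collections are antichains. You add a little detail the paper leaves implicit: the dyadic nesting of the last sides through $t$, and the one-line reason that \eqref{p2} rules out strict containment.
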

\begin{proof}
  Fix a \eqref{p2} collection $\mathcal{E} \subseteq \mathcal{Z}_{d+1}$ and any
  $t \in \mathbb{R}$.

  Lemma \ref{zygmund:slices_of_p2} already implies that $S_{d+1}^t(\mathcal{E})$
  is a $\frac{1}{2}$-sparse collection.
  The bijection claim implies \eqref{zygmund:slices_are_sparse_antichains:eq}.
  Therefore, we only need to show that $S_{d+1}^t(\mathcal{E})$ is \emph{itself}
  an antichain.

  Let $R,S \in S_{d+1}^t(\mathcal{E})$, and define as in Lemma
  \ref{zygmund:slices_of_p2}, the unique rectangles
  $\widehat{R}, \widehat{S} \in \mathcal{E}$ such that
  $t \in \pi_{d+1}(\widehat{R}) \cap \pi_{d+1}(\widehat{S})$ and
  \begin{align*}
    \pi_{d+1}^*(\widehat{R}) = R \quad\text{and}\quad
    \pi_{d+1}^*(\widehat{S}) = S.
  \end{align*}

  Suppose $R \subseteq S$, then by the monotonicity of $\Phi$, we must have
  \begin{align*}
    \pi_{d+1}(\widehat{R}) \subseteq \pi_{d+1}(\widehat{S}).
  \end{align*}
  This implies that $\widehat{R} \subseteq \widehat{S}$.
  Since every \eqref{p2} collection is an antichain, we conclude
  $\widehat{R} = \widehat{S}$, which in turn implies $R = S$.
\end{proof}

We now show that Conjecture \ref{conjecture:antichain_exp_int} implies the
dyadic Zygmund conjecture \ref{zygmund_conjecture}.
\begin{theorem}
  Suppose Conjecture \ref{conjecture:antichain_exp_int} holds for a fixed
  dimension $d$. Then, Conjecture \ref{zygmund_conjecture} holds for dimension
  $d+1$.
\end{theorem}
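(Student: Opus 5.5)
The argument repeats the three-dimensional one from the introduction, with Conjecture \ref{conjecture:antichain_exp_int} in dimension $d$ in place of Theorem \ref{MT:ExpInt}. The target is to show that $\mathcal{M}_{\mathcal{Z}_{d+1}}$ is weak-type $\psi_{d-1}(L)$. For that we use Theorem \ref{CF:Weak-type|SparseExpInt|Covering} (b) with $k = d-1$; note $k \geq 1$ because $d \geq 2$. So it suffices to show the following: every finite $\mathcal{F} \subseteq \mathcal{Z}_{d+1}$ contains a subcollection $\mathcal{G}$ with $|\sh(\mathcal{G})| \asymp |\sh(\mathcal{F})|$ and
\begin{align*}
  \int_{\sh(\mathcal{G})} \exp\Big(c\, h_{\mathcal{G}}^{\frac{1}{d-1}}\Big) \lesssim |\sh(\mathcal{G})|,
\end{align*}
with constants independent of $\mathcal{F}$. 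The family $\mathcal{Z}_{d+1}$ is countable and its members have finite measure, so that theorem applies.

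The steps are as follows.

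\textbf{Step 1 (selection).} Apply Theorem \ref{zygmund:Selection} to the finite family $\mathcal{F}$. This gives a finite \eqref{p2} subcollection $\mathcal{G} \subseteq \mathcal{F}$ with $|\sh(\mathcal{G})| \asymp |\sh(\mathcal{F})|$. The implicit constant depends only on the $L^2$ bound of the strong maximal function in $\mathbb{R}^{d+1}$.

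\textbf{Step 2 (slicing).} By Theorem \ref{zygmund:slices_are_sparse_antichains}, for every $t \in \mathbb{R}$ the slice $S_{d+1}^t(\mathcal{G})$ is a $\frac12$-sparse antichain of dyadic $d$-dimensional rectangles. It is finite, and $h_{\mathcal{G}}(x,t) = h_{S_{d+1}^t(\mathcal{G})}(x)$. Conjecture \ref{conjecture:antichain_exp_int} in dimension $d$ then gives
\begin{align*}
  \int_{\sh(S_{d+1}^t(\mathcal{G}))} \exp\Big(c\, h_{S_{d+1}^t(\mathcal{G})}^{\frac{1}{d-1}}\Big) \leq C |\sh(S_{d+1}^t(\mathcal{G}))|,
\end{align*}
with $c, C$ independent of $t$ and of $\mathcal{G}$. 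If the slice is empty, both sides vanish.

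\textbf{Step 3 (Fubini).} Observe that $\{x : (x,t) \in \sh(\mathcal{G})\} = \sh(S_{d+1}^t(\mathcal{G}))$. This follows from the definition of the slice, since $(x,t) \in R$ if and only if $t \in \pi_{d+1}(R)$ and $x \in \pi_{d+1}^*(R)$. Integrate the slice inequality in $t$ and apply Fubini on both sides. This yields the required exponential estimate for $\mathcal{G}$ with the same constants $c$ and $C$. Theorem \ref{CF:Weak-type|SparseExpInt|Covering} (b) then completes the proof.

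\textbf{Main obstacle.} There is little difficulty once the earlier results are available; the work lies in checking hypotheses. One must check that the exponent $\frac{1}{d-1}$ from the conjecture is exactly what part (b) needs for the target $\psi_{d-1}$. This matches Zygmund's prediction of $\psi_{(d+1)-2}$ in dimension $d+1$. One must also confirm that all constants are uniform over $t$ and over $\mathcal{F}$, and handle measurability in the Fubini step; the latter is trivial because $\mathcal{G}$ is finite.
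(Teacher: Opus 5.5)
Your proof is correct. It is exactly the argument the paper sketches in the introduction for $d=3$: selection, slicing via Theorem \ref{zygmund:slices_are_sparse_antichains}, the conjecture on each slice, Fubini, and then Theorem \ref{CF:Weak-type|SparseExpInt|Covering} (b) with $k=d-1$. The paper's formal proof differs only in that it does not cite part (b), which is stated without proof, but carries it out directly. It bounds $|\Omega| \lesssim \sum_{R\in\mathcal{G}}|R| \leq \int f\,h_{\mathcal{G}}$ and splits this with a Young inequality for the pair $\psi_{d-1}$, $\exp(c\,b^{1/(d-1)})$, putting a small coefficient on the exponential term. After the same slicing and Fubini step, that term is absorbed into $\frac{1}{2}|\Omega|$.
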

\begin{proof}
  By standard density arguments, it suffices to show, for
  every positive function $f \in L^2(\mathbb{R}^{d+1})$ and every $\lambda > 0$
  the inequality
  \begin{align*}
    |\{x \in \mathbb{R}^{d+1}:\,
      \mathcal{M}_{\mathcal{Z}_{d+1}}f(x) > \lambda\}| \lesssim
      \int_{\mathbb{R}^{d+1}} \psi_{d-1}\bigg(\frac{f}{\lambda}\bigg).
  \end{align*}
  By homogeneity, we can further assume that $\lambda = 1$.
  Set
  $\mathcal{E} = \{R \in \mathcal{Z}_{d+1}:\, \langle f \rangle_R > 1\}$.
  Then
  \begin{align*}
  \Omega := \{x \in \mathbb{R}^{d+1}:\,
    \mathcal{M}_{\mathcal{Z}_{d+1}}f(x) > 1\} = \sh(\mathcal{E}).
  \end{align*}
  By the $L^2$-boundedness of the strong maximal function, we have
  $|\sh(\mathcal{E})| < \infty$, thus by Theorem \ref{CF:Selection}, there
  exists a finite $\mathcal{G} \subseteq \mathcal{E}$ which is \eqref{p2} and
  such that $|\sh(\mathcal{E})| \leq C_1|\sh(\mathcal{G})|$, where the constant
  $C_1$ is purely dimensional.

  For every $R \in \mathcal{G}$ we have $|R| \leq \int_R f$, thus
  \begin{align*}
    |\Omega| \leq C_1 \sum_{R \in \mathcal{G}} |R|
    \leq C_1 \int_{\mathbb{R}^{d+1}} f h_{\mathcal{G}}.
  \end{align*}
  By Young's inequality we have, for every real $a,b \geq 0$,
  $ab \leq \psi_{d-1}(a) + \exp\big(d\, b^{\frac{1}{d-1}}\big).$
  Let $c$ and $C$ be the constants from Conjecture
  \ref{conjecture:antichain_exp_int}.
  Then, for a constant $C_2$ depending only on the dimension, $c$, and $C$,
  we have
  \begin{align*}
    C_1 ab \leq C_2 \psi_{d-1}(a) +
      \frac{1}{2C} \exp\Big(cb^{\frac{1}{d-1}}\Big).
  \end{align*}
  Setting $a = f$, $b = h_{\mathcal{G}}$, and integrating
  \begin{align*}
    |\Omega| \leq C_2 \int_{\mathbb{R}^{d+1}} \psi_{d-1}(f)
    + \frac{1}{2C} \int_{\sh(\mathcal{G})} \exp\Big(
      c h_{\mathcal{G}}^{\frac{1}{d-1}}
    \Big).
  \end{align*}
  By Theorem \ref{zygmund:slices_are_sparse_antichains}, Fubini's theorem, and
  Conjecture \ref{conjecture:antichain_exp_int}, we can estimate the last term
  as follows:
  \begin{align*}
    \frac{1}{2C}\int_{\sh(\mathcal{G})} \exp\Big(
      c h_{\mathcal{G}}(x,t)^{\frac{1}{d-1}}
    \Big) \, dx \, dt
    &= \frac{1}{2C} \int_{\mathbb{R}} \int_{\sh(S_{d+1}^t(\mathcal{G}))}
      \exp\Big(
        c h_{S_{d+1}^t(\mathcal{G})}(x)^{\frac{1}{d-1}}
      \Big) \, dx \, dt \\
    &\leq \frac{1}{2} \int_{\mathbb{R}} |\sh(S_{d+1}^t(\mathcal{G}))| \, dt
    = \frac{1}{2}|\sh(\mathcal{G})| \leq \frac{1}{2}|\Omega|.
  \end{align*}
  Thus,
  \begin{align*}
    |\Omega| \leq 2C_2 \int_{\mathbb{R}^{d+1}} \psi_{d-1}(f),
  \end{align*}
  and we are done.
\end{proof}

\section{Height functions of sparse dyadic antichains} \label{sec:heights}

In this section, we will call $\mathcal{E}$ a \emph{dyadic antichain} whenever
it is an antichain of dyadic rectangles in an arbitrary dimension.
All collections in this section will, unless explicitly stated otherwise, be at
most countable.

The purpose of this section is to prove the following result,
which is a more general version of Theorem \ref{MT:kwise}.

\begin{theorem} \label{kwise:precise}
  If $\mathcal{E}$ is a two-dimensional $\eta$-sparse dyadic antichain, then
  for every $k \geq 1$
  \begin{align} \label{kwise:ineq}
    \sum_{\substack{\mathcal{F} \subseteq \mathcal{E} \\
      \#(\mathcal{F}) = k
    }} \Big|
      \bigcap_{R \in \mathcal{F}} R
    \Big|
    \leq B^{k-1} \sum_{R \in \mathcal{E}} |R|,
  \end{align}
  where $B$ is a positive finite constant depending only on $\eta$.
\end{theorem}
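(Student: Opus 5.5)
The plan follows the structure announced in the introduction: a chain lemma, then a quadratic form bound, then a Gram matrix estimate.

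\textbf{Step 1 (ordering by eccentricity).} Write $R = I_R \times J_R$. Let $R, S \in \mathcal{E}$ both contain a point $x$. Then $I_R, I_S$ are nested and $J_R, J_S$ are nested, because dyadic intervals sharing a point are nested. Since $\mathcal{E}$ is an antichain, the nesting cannot go the same way in both coordinates. Hence, if $|I_R| < |I_S|$, then $J_R \supsetneq J_S$, and $|I_R| = |I_S|$ is impossible unless $R = S$. So the rectangles of $\mathcal{E}$ containing a common point are strictly totally ordered by $|I_R|$, with $|J_R|$ strictly decreasing along the order.

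Now take a $k$-subfamily $R_1, \dots, R_k$ with nonempty intersection, ordered so that $|I_{R_1}| < \dots < |I_{R_k}|$. The intersection is $I_{R_1} \times J_{R_k}$. Its measure telescopes as
\begin{align*}
  |I_{R_1}||J_{R_k}| = \sqrt{|R_1||R_k|}\prod_{j=1}^{k-1} P(R_j,R_{j+1}),
\end{align*}
where $P(R,S) := |R\cap S|/\sqrt{|R||S|}$. To check this, note that for consecutive members $P(R_j,R_{j+1}) = (|I_{R_j}||J_{R_{j+1}}|)/\sqrt{|R_j||R_{j+1}|}$. The product of these factors telescopes to $|I_{R_1}||J_{R_k}|/\sqrt{|R_1||R_k|}$, which is exactly what is needed.

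\textbf{Step 2 (reduction to a quadratic form).} Every $k$-subfamily with nonempty intersection is a chain in which consecutive members intersect. Its contribution therefore equals the weight of a path $R_1 \to \dots \to R_k$ in the matrix $P$, multiplied by $\sqrt{|R_1||R_k|}$. Extending the sum to all walks of length $k-1$ can only increase it, since all entries are nonnegative. With $v = (\sqrt{|R|})_{R\in\mathcal{E}}$ this gives
\begin{align*}
  \sum_{\#\mathcal{F}=k}\Big|\bigcap_{\mathcal{F}}R\Big| \le \langle P^{k-1}v,v\rangle \le \|P\|_{\ell^2\to\ell^2}^{k-1}\sum_{R}|R|.
\end{align*}
So it suffices to prove $\|P\| \le B(\eta)$.

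\textbf{Step 3 (Gram bound, the main obstacle).} Here $P$ is the Gram matrix of the normalized indicators $\phi_R = \ind_R/\sqrt{|R|}$, so $\|P\| = \|T\|^2$ for the synthesis map $T a = \sum_R a_R \phi_R$. It is therefore enough to show
\begin{align*}
  \Big\|\sum_R a_R \phi_R\Big\|_2^2 \lesssim_\eta \sum_R |a_R|^2.
\end{align*}
By duality, this is equivalent to the Carleson-type estimate $\sum_R |R|\,\langle g\rangle_R^2 \lesssim \|g\|_2^2$ for $g \ge 0$.

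Sparseness gives $|R| \le \eta^{-1}|E(R)|$ with the sets $E(R)$ pairwise disjoint. Hence
\begin{align*}
  \sum_R |R|\,\langle g\rangle_R^2 \le \eta^{-1}\int \big(\mathcal{M}_{\mathcal{E}} g\big)^2.
\end{align*}
Using the strong maximal function here only gives a bound that is not sharp in $\eta$; any $L^2$ bound would suffice, and the dyadic strong maximal function is bounded on $L^2$. This closes the argument with $B \lesssim \eta^{-1}\|\mathcal{M}_{\mathcal{D}_2}\|_{2\to2}^2$.

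I expect the subtle points to be Step 1's telescoping and the justification that restricting to chains loses nothing. The Gram bound, which I initially expected to be hard, appears to follow directly from sparseness plus $L^2$ boundedness of the dyadic strong maximal function. If the intended constant must avoid the strong maximal function, I would instead try a Schur test on $P$. For each fixed $R$, this would split the rectangles $S$ by relative eccentricity and use the disjointness of the sets $E(S)$ inside each class.
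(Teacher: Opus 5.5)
Your proposal is correct and follows the paper's proof essentially step for step. It uses the same telescoping chain identity (Lemma \ref{chain_lemma}), the same domination by $\langle P^{k-1}v,v\rangle$ (Lemma \ref{bound_by_bilinear}), and the same Gram-matrix bound via the sparse square function and the $L^2$ bound of the dyadic strong maximal function (Proposition \ref{gram}), which yields $B = 16/\eta$.
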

If $\mathcal{E}$ is $\eta$-sparse, then it satisfies the Carleson packing
condition:
\begin{align*}
  \sum_{R \in \mathcal{E}} |R| \leq \eta^{-1} |\sh(\mathcal{E})|,
\end{align*}
thus, Theorem \ref{MT:kwise} follows from Theorem \ref{kwise:precise}.
In turn, using Theorem \ref{equiv:theorem} with $n=1$, together with the
monotone convergence theorem, Theorem \ref{MT:ExpInt}
follows from Theorem \ref{kwise:precise}.

\subsection{The order structure of two-dimensional antichains}

We shall focus now on two dimensions and study how dyadic rectangles can
intersect. We begin by introducing a partial order that will become particularly
useful in two dimensions.
\begin{definition*}
  For any two dyadic rectangles $R$ and $S$, define
  \begin{align*}
    R \prec S \iff \pi_1(R) \subsetneq \pi_1(S)
    \quad\text{and}\quad
    \pi_2(R) \supsetneq \pi_2(S).
  \end{align*}
\end{definition*}
When $R$ and $S$ belong to the same antichain, $R \cap S \neq \emptyset$ implies
that $R$ and $S$ are related by $\prec$, or in other words:
they must intersect in the shape of a cross.

\begin{figure}[htpb]
  \centering
  \begin{tikzpicture}
    \fill[cbblue, fill opacity=0.3] (0,1.5) rectangle (6,3);
    \fill[cborange, fill opacity=0.3] (2,0) rectangle (3.5,4.5);

    \draw[cbblue, very thick] (0,1.5) rectangle (6,3);
    \draw[cborange, very thick] (2,0) rectangle (3.5,4.5);

    \node[cborange] at (3.7, 4.5) {$R$};
    \node[cbblue] at (-0.25, 1.25) {$S$};
  \end{tikzpicture}
  \caption{Two dyadic rectangles intersecting in a cross.
  In this example $R \prec S$.}
  \label{fig:cross}
\end{figure}
Assume that $R$ and $S$ are two different, intersecting, dyadic rectangles,
and assume that neither contains the other.
The sides of $R$ and $S$ are dyadic, therefore $\pi_1(R) \subseteq \pi_1(S)$ or
$\pi_1(R) \supseteq \pi_1(S)$, and similarly for $\pi_2$.
Suppose $\pi_1(R) \subseteq \pi_1(S)$. The two possibilities for $\pi_2$ are:
\begin{align*}
  \pi_2(R) \subseteq \pi_2(S) \quad\text{or}\quad \pi_2(S) \subseteq \pi_2(R).
\end{align*}
The first case violates the antichain condition, since then we would have $R
\subseteq S$.

Therefore if $R$ and $S$ intersect and
$\pi_1(R) \subseteq \pi_1(S)$, then we must also have $\pi_2(R) \supseteq
\pi_2(S)$. Moreover, if $\pi_1(R) = \pi_1(S)$, then we again have containment,
and similarly for $\pi_2$. This shows that, whenever $R$ and $S$ intersect they
must be strongly ordered by $\prec$.

Theorem \ref{kwise:precise} talks about intersections of $k$ rectangles.
Thus, we now wish to study the situation of the intersection of $k$ rectangles
taken from a dyadic antichain. Suppose we sort them by $\prec$:
\begin{align*}
  R_1 \prec R_2 \prec \dots \prec R_k.
\end{align*}
The intersection of the $k$ rectangles is simply
\begin{align*}
  R_1 \cap \dots \cap R_k = R_1 \cap R_k
  \implies |R_1 \cap \dots \cap R_k| = |\pi_1(R_1)||\pi_2(R_k)|.
\end{align*}
While true, this loses the information that all the other $(k-2)$ rectangles
could provide.
Also, it treats the first and second projection differently. It would be
convenient to have an expression for $|R_1 \cap \dots \cap R_k|$ which is as
symmetric as possible.

In order to motivate the next result, where this idea is carried out fully, we
will first focus on the case of just three intersecting rectangles.
Suppose we have $R \prec S \prec T$, then
\begin{align*}
  |R \cap S \cap T| &= |\pi_1(R)||\pi_2(T)| \\
  &= |\pi_1(R)|
     {\color{cbblue}|\pi_2(S)| |\pi_2(S)|^{-1} }
    |\pi_2(T)| \\
  &= |R \cap S| { \color{cbblue} |\pi_2(S)|^{-1}}
      {\color{cbvermillion} |\pi_1(S)|} |\pi_2(T)|
      {\color{cbvermillion} |\pi_1(S)|^{-1}} \\
  &= |R \cap S| |S \cap T| \frac{1}{|S|}.
\end{align*}
We can symmetrize this computation slightly and obtain
\begin{align*}
  |R \cap S \cap T| =
    \frac{|R \cap S|}{\sqrt{|S|}}
    \frac{|S \cap T|}{\sqrt{|S|}}
    =
    \sqrt{|R|} \frac{|R \cap S|}{\sqrt{|R||S|}}
    \frac{|S \cap T|}{\sqrt{|S||T|}} \sqrt{|T|}
\end{align*}

The following lemma, which is the only time when we need the dimension to be
two, generalizes this computation.
\begin{lemma} \label{chain_lemma}
  Let $\mathcal{E}$ be a finite dyadic antichain, and suppose $\mathcal{F}
  \subseteq \mathcal{E}$ is a subcollection of rectangles with
  non-empty common intersection.
  Then $\mathcal{F}$ is strongly ordered by $\prec$,
  and if we write the elements of $\mathcal{F}$ as
  \begin{align*}
    R_1 \prec R_2 \prec \dots \prec R_k,
  \end{align*}
  then
  \begin{align} \label{chain_lemma:telescope}
    \Bigg|\bigcap_{R \in \mathcal{F}} R \Bigg| = |R_1|^{\frac{1}{2}} \Bigg(
      \prod_{i=1}^{k-1} P(R_i, R_{i+1})
    \Bigg) |R_k|^{\frac{1}{2}},
  \end{align}
  where for any two rectangles $R$ and $S$, $P(R,S)$ is the $L^2$-normalized
  ``correlation''
  \begin{align*}
    P(R,S) := \frac{|R \cap S|}{|R|^{\frac{1}{2}} |S|^{\frac{1}{2}}}.
  \end{align*}
\end{lemma}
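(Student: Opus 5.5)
The proof splits into two parts: a combinatorial step giving the total order, and an algebraic step giving the telescoping product.

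\emph{Total order.} By the discussion preceding the lemma, any two distinct intersecting members of a dyadic antichain are comparable under $\prec$. Since $\mathcal{F}$ has non-empty common intersection, every pair in $\mathcal{F}$ intersects, so every pair is comparable. The relation $\prec$ is a strict partial order: it is irreflexive and transitive, because strict inclusion is transitive in each coordinate. A strict partial order in which all pairs are comparable is a strict total order on $\mathcal{F}$. Hence we may label $\mathcal{F}$ as $R_1 \prec \dots \prec R_k$.

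\emph{Product formula.} From the chain we have $\pi_1(R_1) \subseteq \pi_1(R_i)$ and $\pi_2(R_k) \subseteq \pi_2(R_i)$ for every $i$. Therefore
\begin{align*}
\bigcap_{i} R_i = \pi_1(R_1) \times \pi_2(R_k),
\qquad\text{so}\qquad
\Bigg|\bigcap_i R_i\Bigg| = |\pi_1(R_1)|\,|\pi_2(R_k)|.
\end{align*}
Similarly, for consecutive elements $R_i \cap R_{i+1} = \pi_1(R_i) \times \pi_2(R_{i+1})$. This gives
\begin{align*}
P(R_i,R_{i+1}) = \frac{|\pi_1(R_i)|\,|\pi_2(R_{i+1})|}{|R_i|^{1/2}|R_{i+1}|^{1/2}}.
\end{align*}

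Now write $a_i = |\pi_1(R_i)|$ and $b_i = |\pi_2(R_i)|$, so that $|R_i| = a_i b_i$. Multiplying the consecutive correlations,
\begin{align*}
\prod_{i=1}^{k-1} P(R_i,R_{i+1}) = \frac{\prod_{i=1}^{k-1} a_i \prod_{i=2}^{k} b_i}{(a_1b_1)^{1/2}(a_kb_k)^{1/2}\prod_{i=2}^{k-1} a_ib_i}.
\end{align*}
The factors $a_i$ for $2 \le i \le k-1$ cancel, as do the factors $b_i$ for $2 \le i \le k-1$. What remains is $a_1 b_k \,/\, \big((a_1b_1)^{1/2}(a_kb_k)^{1/2}\big)$. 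Multiplying by $|R_1|^{1/2}|R_k|^{1/2}$ yields exactly $a_1 b_k = |\bigcap R_i|$.

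The case $k=1$ is trivial: the product is empty and the right-hand side equals $|R_1|$. The case $k=2$ is a direct check of the same computation.

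I do not expect a serious obstacle. The only point needing care is the first part: pairwise comparability requires the elements of $\mathcal{F}$ to be distinct and pairwise intersecting, and the cross-shape argument gives strict inclusions, which is what makes $\prec$ strict.
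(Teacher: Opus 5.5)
Your proof is correct and takes essentially the same route as the paper: intersecting members of the antichain are strictly comparable under $\prec$, the common intersection is $\pi_1(R_1)\times\pi_2(R_k)$, and the product of consecutive correlations telescopes to $a_1 b_k/\big((a_1b_1)^{1/2}(a_kb_k)^{1/2}\big)$. The differences are cosmetic. The paper writes each factor as $P(R_i,R_{i+1}) = (a_i/b_i)^{1/2}(a_{i+1}/b_{i+1})^{-1/2}$, a ratio of square-root eccentricities, so the cancellation is visible factor by factor, while you cancel in the full product. You also spell out the transitivity of $\prec$, which the paper leaves implicit.
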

\begin{proof}
  We have already shown that $\mathcal{F}$ is strongly ordered.

  Note that for $\{R_i\}$ increasing with respect to $\prec$ we have
  \begin{align*}
    P(R_i, R_{i+1}) &= \frac{|R_i \cap R_{i+1}|}{\sqrt{|R_i| |R_{i+1}|}} \\
    &= \frac{|\pi_1(R_i)| |\pi_2(R_{i+1})|}{\sqrt{|R_i| |R_{i+1}|}} \\
    &= \bigg(\frac{|\pi_1(R_i)|}{|\pi_2(R_i)|}\bigg)^{\frac{1}{2}}
       \bigg(\frac{|\pi_1(R_{i+1})|}{|\pi_2(R_{i+1})|}\bigg)^{-\frac{1}{2}}.
  \end{align*}
  Therefore, the product in \eqref{chain_lemma:telescope} telescopes:
  \begin{align*}
    \prod_{i=1}^{k-1} P(R_i, R_{i+1}) =
      \bigg(\frac{|\pi_1(R_1)|}{|\pi_2(R_1)|}\bigg)^{\frac{1}{2}}
      \bigg(\frac{|\pi_1(R_k)|}{|\pi_2(R_k)|}\bigg)^{-\frac{1}{2}},
  \end{align*}
  and we have
  \begin{align*}
    |R_1|^{\frac{1}{2}} \Bigg(
      \prod_{i=1}^{k-1} P(R_i, R_{i+1})
    \Bigg) |R_k|^{\frac{1}{2}} = |\pi_1(R_1)||\pi_2(R_k)|,
  \end{align*}
  which is what we needed to prove.
\end{proof}

If we use this lemma to try to compute the left-hand side of \eqref{kwise:ineq}
we arrive at the following expression
\begin{align*}
  \sum_{\substack{
    \mathcal{F} \subseteq \mathcal{E} \\
    \#(\mathcal{F}) = k
  }} \Bigg|\bigcap_{R \in \mathcal{F}} R \Bigg| =
  \sum_{R_1 \prec \dots \prec R_k} |R_1|^{\frac{1}{2}} \Bigg(
      \prod_{i=1}^{k-1} P(R_i, R_{i+1})
    \Bigg) |R_k|^{\frac{1}{2}},
\end{align*}
where the sum is taken over all ordered $k$-element subcollections of
$\mathcal{E}$.
If, instead, we sum over all possible (ordered and possibly with repetitions)
choices of $k$ rectangles in $\mathcal{E}$, we arrive at the analytically
simpler inequality
\begin{align*}
  \sum_{\substack{
    \mathcal{F} \subseteq \mathcal{E} \\
    \#(\mathcal{F}) = k
  }} \Bigg|\bigcap_{R \in \mathcal{F}} R \Bigg| \leq
  \sum_{(R_1, \dots, R_k) \in \mathcal{E}^k}
  |R_1|^{\frac{1}{2}} \Bigg(
      \prod_{i=1}^{k-1} P(R_i, R_{i+1})
    \Bigg) |R_k|^{\frac{1}{2}}.
\end{align*}

Let us focus on the $k=2$ case first.
The right-hand side of the above inequality is simply
\begin{align*}
  \sum_{R,S \in \mathcal{E}} |R|^{\frac{1}{2}} P(R,S) |S|^{\frac{1}{2}} =:
  \circledast.
\end{align*}
This expression looks very similar to a bilinear form. Indeed, if we treat $P$
as an $\mathcal{E}\times \mathcal{E}$ matrix, then it is a real, symmetric, and
for $v \in \ell^2(\mathcal{E})$ given by $v_R = |R|^{\frac{1}{2}}$, we have
\begin{align*}
  \circledast = \langle Pv, v \rangle.
\end{align*}
Higher values of $k$ can be understood as the same expression but with higher
powers of $P$. This is the content of the next lemma.
\begin{lemma} \label{bound_by_bilinear}
  If $\mathcal{E}$ is a dyadic antichain, and we treat $P$ as the Gram matrix of
  a bilinear form like above, then
  \begin{align*}
    \sum_{\substack{
      \mathcal{F} \subseteq \mathcal{E} \\
      \#(\mathcal{F}) = k
    }} \Bigg|\bigcap_{R \in \mathcal{F}} R \Bigg| \leq
    \langle P^{k-1} v, v \rangle,
  \end{align*}
  where $v$ is the $\ell^2(\mathcal{E})$ vector given by $v_R =
  |R|^{\frac{1}{2}}$.
\end{lemma}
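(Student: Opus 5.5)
The plan is to combine Lemma \ref{chain_lemma} with the expansion of the quadratic form $\langle P^{k-1}v, v\rangle$ as a sum over words of length $k$, and then drop the ordering constraint using nonnegativity.

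\emph{Case $k=1$.} Here $P^0$ is the identity, so the right-hand side is $\langle v, v\rangle = \sum_{R} |R|$. This is exactly the left-hand side, and the inequality holds with equality.

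\emph{Case $k \geq 2$.} I first group the $k$-element subcollections according to whether their common intersection is empty. Those with empty intersection contribute zero, so only the others matter. For each $\mathcal{F}$ with non-empty intersection, Lemma \ref{chain_lemma} says that $\mathcal{F}$ is strongly ordered by $\prec$. Hence it has a unique enumeration $R_1 \prec \dots \prec R_k$, and its measure is given by \eqref{chain_lemma:telescope}.

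This defines an injective map from such subcollections into $\mathcal{E}^k$, namely $\mathcal{F} \mapsto (R_1, \dots, R_k)$. Injectivity holds because distinct sets give distinct tuples. Thus the left-hand side equals a sum of the quantity
\begin{align*}
  |R_1|^{\frac12}\prod_{i=1}^{k-1} P(R_i,R_{i+1})\,|R_k|^{\frac12}
\end{align*}
over a subset of $\mathcal{E}^k$.

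Every entry $P(R,S)$ and every $|R|^{1/2}$ is nonnegative, so each term in the full sum over $\mathcal{E}^k$ is nonnegative. Enlarging the index set to all of $\mathcal{E}^k$ therefore only increases the sum.

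The last step is to identify this full sum with $\langle P^{k-1}v,v\rangle$. Expanding the matrix product gives
\begin{align*}
  (P^{k-1})_{R_1 R_k} = \sum_{R_2,\dots,R_{k-1}} \prod_{i=1}^{k-1} P(R_i, R_{i+1}).
\end{align*}
Pairing this with $v_{R_1} v_{R_k} = |R_1|^{1/2}|R_k|^{1/2}$ and summing over $R_1, R_k$ yields exactly the sum over all of $\mathcal{E}^k$.

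\emph{Well-definedness.} If $\mathcal{E}$ is finite, everything is a finite sum and the argument is complete. For countable $\mathcal{E}$ I interpret $\langle P^{k-1}v,v\rangle$ as the above series of nonnegative terms, which is well defined in $[0,\infty]$. Alternatively, one can prove the finite case and pass to the limit by monotone convergence over an exhausting sequence of finite subfamilies. Restricting to a subfamily preserves the antichain property, and both sides are monotone in the family.

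There is no real obstacle here. The only points needing care are the injectivity of the ordering map and the nonnegativity justification for extending the sum to all tuples, including tuples with repetitions.
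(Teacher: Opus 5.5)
Your proposal is correct and follows the paper's argument. You use Lemma \ref{chain_lemma} to write each nonempty $k$-wise intersection as the telescoped product along the unique $\prec$-increasing enumeration, enlarge the index set to all of $\mathcal{E}^k$ by nonnegativity, and recognize the full sum as the expansion of $\langle P^{k-1}v,v\rangle$; your separate treatment of $k=1$ and of countable families just makes explicit what the paper leaves implicit.
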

\begin{proof}
  Written from the point of view of the Hilbert space of $\ell^2$ sequences over
  $\mathcal{E}$, by Lemma \ref{chain_lemma} we have to show
  \begin{align*}
    \sum_{(R_1, \dots, R_k) \in \mathcal{E}^k}
    v_{R_1} \Bigg(
    \prod_{i=1}^{k-1} P(R_i, R_{i+1})
    \Bigg) v_{R_k} = \langle P^{k-1} v, v \rangle.
  \end{align*}
  This is a standard computation from linear algebra: one can show by
  induction on $k$ that for any symmetric $N \times N$ matrix $A$ we have
  \begin{align*}
    \langle A^{k-1}u, v \rangle = \sum_{i_1, \dots, i_k \in [N]}
    \Bigg( \prod_{j=1}^{k-1} A(i_j, i_{j+1}) \Bigg) u_{i_1} v_{i_k},
  \end{align*}
  where $[N] = \{1, \dots, N\}$.
  Applying this with $A = P$ and $u = v$ yields the desired identity.
\end{proof}

Note that this Lemma is almost enough to prove Theorem \ref{kwise:precise}.
Indeed, suppose we had the following result:
\begin{proposition} \label{gram}
  For every $\eta \in (0,1)$,
  every dimension $d \geq 2$,
  and every finite dyadic antichain $\mathcal{E}$ that is $\eta$-sparse,
  we have
  \begin{align*}
    \|Pu\|_{\ell^2(\mathcal{E})} \leq \frac{4^d}{\eta}
    \|u\|_{\ell^2(\mathcal{E})}
  \end{align*}
  for all $u \in \ell^2(\mathcal{E})$.
\end{proposition}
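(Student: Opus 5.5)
The idea is to read $P$ as a Gram matrix and turn the operator-norm bound into a Carleson embedding, controlled by the dyadic strong maximal function. I do not expect to need the antichain hypothesis here; only sparseness and dyadicity enter.

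\emph{Factorization.} Set $e_R := |R|^{-1/2}\ind_R \in L^2(\mathbb{R}^d)$, so that $P(R,S) = \langle e_R, e_S\rangle$. Define $T : \ell^2(\mathcal{E}) \to L^2(\mathbb{R}^d)$ by $Tu = \sum_{R} u_R e_R$; since $\mathcal{E}$ is finite this is well defined. Then $P = T^*T$, where
\begin{align*}
  (T^*f)_R = \langle f, e_R\rangle = |R|^{\frac{1}{2}}\langle f\rangle_R .
\end{align*}
Because $P$ is positive semidefinite, $\|P\|_{\ell^2\to\ell^2} = \|T\|^2 = \|T^*\|^2_{L^2\to\ell^2}$. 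So it suffices to show, for every $f\in L^2(\mathbb{R}^d)$,
\begin{align*}
  \sum_{R\in\mathcal{E}} |R|\,\langle f\rangle_R^2 \leq \frac{4^d}{\eta}\,\|f\|_{L^2}^2 .
\end{align*}

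\emph{Using sparseness.} We may assume $f\geq 0$, replacing $f$ by $|f|$. Let $E(R)\subseteq R$ be the pairwise disjoint sets with $|E(R)|\geq \eta|R|$. On $E(R)$ we have $\langle f\rangle_R \leq \mathcal{M}_{\mathcal{D}_d}f$, because $R$ is dyadic and contains every point of $E(R)$. Hence
\begin{align*}
  \sum_R |R|\langle f\rangle_R^2 \leq \eta^{-1}\sum_R \int_{E(R)} \langle f\rangle_R^2 \leq \eta^{-1}\int_{\mathbb{R}^d} (\mathcal{M}_{\mathcal{D}_d} f)^2 ,
\end{align*}
where the last step uses the disjointness of the $E(R)$.

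\emph{Maximal bound.} The dyadic strong maximal function is pointwise dominated by the composition of the one-dimensional dyadic maximal operators in each coordinate. The one-dimensional dyadic maximal operator has $L^2$ norm at most $2$ by Doob's inequality. Iterating with Fubini gives $\|\mathcal{M}_{\mathcal{D}_d}\|_{L^2\to L^2}\leq 2^d$, so $\int (\mathcal{M}_{\mathcal{D}_d}f)^2 \leq 4^d\|f\|_2^2$. Combining the three steps yields $\|P\|\leq 4^d/\eta$, as claimed.

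I expect no serious obstacle. The one step that needs care is the identity $\|P\| = \|T^*\|^2$, which requires recognising that $P$ is exactly $T^*T$ with the $L^2$-normalised indicators $e_R$. A direct Schur test with the weights $|R|^{1/2}$ fails, because it leads to the unbounded quantity $\langle h_{\mathcal{E}}\rangle_R$. Beyond that, one only needs to check the constant in Doob's inequality to get exactly $4^d$.
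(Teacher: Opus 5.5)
Your proposal is correct and follows essentially the same route as the paper. You identify $P$ as the Gram matrix of the normalized indicators $e_R$ and reduce the operator norm to the sparse square-function bound $\sum_R |R|\langle f\rangle_R^2 \le 4^d\eta^{-1}\|f\|_{L^2}^2$, which you then prove through the disjoint sets $E(R)$ and the iterated Doob bound for $\mathcal{M}_{\mathcal{D}_d}$ at $p=2$. Your $T^*T$ formulation simply repackages the paper's quadratic-form-plus-duality step, and your reduction to $|f|$ covers signed $f$, whereas the paper states its square-function bound only for non-negative $f$.
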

Then, Theorem \ref{kwise:precise} follows.
\begin{proof}[Proof of Theorem \ref{kwise:precise}]
  Suppose $\mathcal{E}$ is a dyadic antichain, then by Lemma
  \ref{bound_by_bilinear} and Proposition \ref{gram} we have
  \begin{align*}
    \sum_{\substack{\mathcal{F} \subseteq \mathcal{E} \\
    \#(\mathcal{F}) = k
    }} \Big|
    \bigcap_{R \in \mathcal{F}} R
    \Big| \leq \langle P^{k-1} v, v \rangle &\leq B^{k-1} \|v\|_{\ell^2}^2 \\
    &= B^{k-1} \sum_{R \in \mathcal{E}} |R|,
  \end{align*}
  where $B = 4^d / \eta$ and, like before, $v_R := |R|^{\frac{1}{2}}$.
\end{proof}

Thus, we only have to prove Proposition \ref{gram}.
\begin{proof}[Proof of Proposition \ref{gram}]
  The matrix $P$ is real and symmetric.
  In fact, if for every $T \in \mathcal{E}$ we set
  \begin{align*}
    e_T := \frac{\ind_T}{|T|^{\frac{1}{2}}},
  \end{align*}
  then
  \begin{align*}
    P(R,S) = \frac{|R \cap S|}{\sqrt{|R||S|}}
    = \int_{\mathbb{R}^d} e_R e_S = \langle e_R, e_S \rangle.
  \end{align*}
  Thus $P$ is the Gram matrix of the system $\{e_R\}_{R \in \mathcal{E}}$,
  and in particular it is positive semidefinite. For such a matrix the
  operator norm is attained by the associated quadratic form,
  \begin{align*}
    \|P\|_{\ell^2(\mathcal{E}) \to \ell^2(\mathcal{E})}
    = \sup_{\|u\|_{\ell^2(\mathcal{E})} = 1} \langle Pu, u \rangle.
  \end{align*}
  This observation allows us to, when expanding the sum, decouple the
  interaction among the rectangles:
  \begin{align*}
    \langle Pu, u \rangle = \sum_{R, S \in \mathcal{E}}
    \langle e_R, e_S \rangle u_R u_S
    = \bigg\| \sum_{R \in \mathcal{E}} u_R e_R
      \bigg\|_{L^2(\mathbb{R}^d)}^2.
  \end{align*}
  So it suffices to bound $\|\sum_{R \in \mathcal{E}} u_R e_R\|_{L^2}$,
  which is the adjoint formulation of the following sparse square function
  bound:
  if $\mathcal{E}$ is an $\eta$-sparse collection of axis-parallel dyadic
  $d$-dimensional rectangles, then
  \begin{align*}
    \sum_{R \in \mathcal{E}} \langle f, e_R \rangle^2 \leq \frac{4^d}{\eta}
    \|f\|_{L^2(\mathbb{R}^d)}^2.
  \end{align*}
  for every non-negative $f \in L^2(\mathbb{R}^2)$.
  The proof of this result is standard, but we include it for completeness.

  Using the definition of $e_R$:
  \begin{align*}
    \sum_{R \in \mathcal{E}} \langle f, e_R \rangle^2 &=
      \sum_{R \in \mathcal{E}} \frac{1}{|R|} \bigg( \int_R f \bigg)^2 \\
      &= \sum_{R \in \mathcal{E}} \bigg( \frac{1}{|R|} \int_R f \bigg)^2 |R| \\
      &\leq \eta^{-1} \sum_{R \in \mathcal{E}} \bigg( \frac{1}{|R|} \int_R f
      \bigg)^2 |E(R)|,
  \end{align*}
  where we have used the definition of $\eta$-sparse collection in the last
  line.

  Since the sets $\{E(R)\}$ are pairwise disjoint, and $E(R) \subseteq R$, we
  can bound the expression in the last line with the dyadic
  strong maximal function $\mathcal{M}_{\mathcal{D}_d}$.
  For every $p > 1$ and every $f \in L^p(\mathbb{R}^d)$ we have
  \begin{align}
    \label{doob}
    \|\mathcal{M}_{\mathcal{D}_d}f\|_{L^p(\mathbb{R}^d)} \leq (p')^d
    \|f\|_{L^p(\mathbb{R}^d)},
  \end{align}
  which can be proved iterating the corresponding one-dimensional bound.

  Using this inequality with $p=2$,
  \begin{align*}
    \sum_{R \in \mathcal{E}} \langle f, e_R \rangle^2 &\leq
    \eta^{-1} \int_{\mathbb{R}^d} (\mathcal{M}_{\mathcal{D}_d} f)^2 \\
    &\leq \frac{4^{d}}{\eta} \|f\|_{L^2}^2.
  \end{align*}

  It only remains to finish the estimate of $\langle Pu, u \rangle$.
  By duality and the Cauchy-Schwarz inequality,
  \begin{align*}
    \bigg\| \sum_{R \in \mathcal{E}} u_R e_R \bigg\|_{L^2}
    = \sup_{\|f\|_{L^2} = 1} \sum_{R \in \mathcal{E}} u_R
      \langle f, e_R \rangle
    \leq \|u\|_{\ell^2(\mathcal{E})} \sup_{\|f\|_{L^2} = 1}
      \bigg( \sum_{R \in \mathcal{E}} \langle f, e_R \rangle^2
      \bigg)^{\frac{1}{2}},
  \end{align*}
  so that $\langle Pu, u \rangle \leq \frac{4^d}{\eta}
  \|u\|_{\ell^2(\mathcal{E})}^2$.
\end{proof}

\subsection{Sharpness} \label{sec:heights:sharp}

In this subsection we consider two classes of examples that explain the way in
which Theorem \ref{MT:ExpInt} is sharp.
The examples are naturally $d$-dimensional, thus they also address the sharpness
of Conjecture \ref{conjecture:antichain_exp_int} if true.
In view of Theorem \ref{equiv:theorem}, the sharpness claims transfer to Theorem
\ref{MT:kwise} and Conjecture \ref{conjecture:kwise}.

\subsubsection{The antichain condition}
The first example shows that the antichain condition cannot be dropped while
maintaining the exponent of $h_{\mathcal{E}}$.

In particular, we prove
\begin{theorem}
  Fix $\alpha > 0$ and suppose there exist finite positive constants
  $c$ and $C$ such that
  \begin{align*}
    \int_{\sh(\mathcal{E})} \exp\big( c h_{\mathcal{E}}^\alpha \big)
    \leq C |\sh(\mathcal{E})|
  \end{align*}
  holds for all $\frac{1}{2}$-sparse, not necessarily antichain, collections of
  dyadic rectangles in $\mathbb{R}^d$.
  Then $\alpha \leq \frac{1}{d}$.
\end{theorem}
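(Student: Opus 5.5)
We test the hypothesis on the family of dyadic rectangles anchored at a common corner, as announced in the structure of the paper. Fix $N \geq 1$ and let
\begin{align*}
  \mathcal{E}_N := \big\{ [0,2^{-j_1}) \times \dots \times [0,2^{-j_d}) :\,
    j_1,\dots,j_d \in \{0,\dots,N\} \big\}.
\end{align*}
All these rectangles contain the origin corner and are nested in every coordinate, so this family is far from an antichain. The shadow is $\sh(\mathcal{E}_N) = [0,1)^d$, which has measure $1$.

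The first step is to check that $\mathcal{E}_N$ is $\frac{1}{2}$-sparse. For $R = \prod_i [0,2^{-j_i})$ I would take
\begin{align*}
  E(R) := \prod_{i=1}^d A_{j_i},
\end{align*}
where $A_j = [2^{-j-1},2^{-j})$ for $j<N$ and $A_N = [0,2^{-N})$. These sets are pairwise disjoint, since distinct multi-indices give disjoint products of the disjoint intervals $A_j$. Their measure is $|E(R)| = \prod_i |A_{j_i}| \geq 2^{-d}|R|$. This only gives $2^{-d}$-sparseness, not $\frac12$-sparseness.

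To fix this, I would use a sparser grid, taking $j_i \in \{0, m, 2m, \dots, Nm\}$ with $m$ depending on $d$ so that $(1-2^{-m})^d \geq \frac12$. Then $A_j = [2^{-j-m},2^{-j})$, and $|E(R)| \geq (1-2^{-m})^d|R| \geq \frac12|R|$. This is the only delicate point, and it is routine.

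The second step is to compute the height function. For $x \in [0,1)^d$ with $x_i \in A_{j_i}$, the number of $R$ containing $x$ is $\prod_i (\text{number of levels } \ell \leq j_i)$. On the cell where every $x_i$ lies in $A_{Nm}$, that is on the cube $[0,2^{-Nm})^d$, this gives
\begin{align*}
  h_{\mathcal{E}_N} = (N+1)^d.
\end{align*}
That cube has measure $2^{-Nmd}$.

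The third step is to plug this into the hypothesis:
\begin{align*}
  2^{-Nmd}\exp\big(c (N+1)^{d\alpha}\big) \leq \int_{\sh(\mathcal{E}_N)} \exp\big(c h_{\mathcal{E}_N}^\alpha\big) \leq C.
\end{align*}
Taking logarithms yields
\begin{align*}
  c(N+1)^{d\alpha} \leq \log C + Nmd\log 2
\end{align*}
for all $N$. If $\alpha > \frac{1}{d}$, the left side grows like $N^{d\alpha}$, superlinearly, which is a contradiction as $N \to \infty$. Hence $\alpha \leq \frac1d$.

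The main obstacle is only arranging exact $\frac12$-sparseness, handled by the spacing $m$. One could alternatively integrate over all cells, but the single extreme cell already suffices.
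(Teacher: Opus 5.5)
Your proof is correct. It differs from the paper's only in how you obtain $\frac{1}{2}$-sparseness.

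The paper works with the full family of all dyadic rectangles anchored at the origin inside $[0,1)^d$, which is only $2^{-d}$-sparse. It then invokes Theorem 4.1 of \cite{Rey2024} to split this family into $L = \mathcal{O}(2^d)$ subfamilies, each of them $\frac{1}{2}$-sparse. The hypothesis is applied to each piece, and the pieces are recombined through the quasi-subadditivity $(\sum_i x_i)^\alpha \leq L^{\max(0,\alpha-1)} \sum_i x_i^\alpha$ and H\"older's inequality, at the price of shrinking $c$ to $c/(LK)$.

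You instead keep only every $m$-th dyadic scale. The explicit sets $E(R) = \prod_i A_{j_i}$ then occupy a fraction at least $(1-2^{-m})^d \geq \frac{1}{2}$ of each rectangle, so the hypothesis applies directly to a single finite family. The only cost is that the cube on which $h = (N+1)^d$ has measure $2^{-Nmd}$ instead of $2^{-Nd}$. This is harmless, because $m$ depends only on $d$ and the exponent stays linear in $N$.

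What each route buys:
\begin{itemize}
\item Your route is more elementary and self-contained: no external splitting theorem and no H\"older recombination.
\item The paper's route is more robust: it shows that an exponential estimate for $\frac{1}{2}$-sparse families transfers, with worse constants, to any $\eta$-sparse family. It does not depend on the lacunary product structure of this particular example, which is what your thinning trick exploits.
\end{itemize}
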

\begin{proof}
  Consider the specific example of family given by all dyadic rectangles
  that contain the origin, and are contained in $[0,1)^d$:
  \begin{align*}
    \mathcal{F} &= \big\{R:\,
    \text{$R$ is a dyadic rectangle such that
    $0 \in R$ and $R \subseteq [0,1)^d$
    }
    \big\} \\
    &= \big\{\big[0,2^{-m_1}\big) \times \dots \times \big[0,2^{-m_d}\big):\,
    m \in \mathbb{Z}_{\geq 0}^d \big\}.
  \end{align*}
  See Figure \ref{figure:chains_example}.
  The family $\mathcal{F}$ is $2^{-d}$-sparse, indeed one can take
  \begin{align*}
    E\Bigg(\prod_{i=1}^d \big[0,2^{-m_i}\big)\Bigg)
    := \prod_{i=1}^d \big[2^{-m_i-1}, 2^{-m_i}\big).
  \end{align*}
  \begin{figure}[htpb!]
    \centering
    \begin{tikzpicture}[scale=7,
      lbl/.style={font=\large, inner sep=1.25pt},
      ptr/.style={->, thick, shorten >=2pt}]
      \foreach \t in {1, 0.5, 0.25, 0.125, 0.0625, 0.03125, 0.015625}{
        \draw[gray!45, very thin] (\t, 0) -- (\t, 1);
        \draw[gray!45, very thin] (0, \t) -- (1, \t);
      }
      \draw[gray!45, very thin] (0,0) rectangle (1,1);

      \draw[cborange, very thick] (0,0) rectangle (1, 0.5);
      \draw[cbblue, very thick] (0,0) rectangle (0.5, 0.25);

      \fill[cborange] (0.5, 0.25) rectangle (1, 0.5);
      \fill[cbblue] (0.25, 0.125) rectangle (0.5, 0.25);

      \node[cborange, lbl] (Slab) at (0.35, 0.75) {$S$};
      \draw[ptr, cborange!55] (Slab) to[bend left=15] (0.36, 0.5);

      \node[cborange, lbl] (ESlab) at (1.2, 0.62) {$E(S)$};
      \draw[ptr, cborange!55] (ESlab) to[bend left=25] (0.88, 0.44);

      \node[cbblue, lbl] (Rlab) at (-0.225, 0.14) {$R$};
      \draw[ptr, cbblue!55] (Rlab) to[bend right=5] (-0.001, 0.125);

      \node[cbblue, lbl] (ERlab) at (1.2, 0.15) {$E(R)$};
      \draw[ptr, cbblue!55] (ERlab) to[bend left=3] (0.45, 0.206);
    \end{tikzpicture}
    \caption{
      The family of dyadic rectangles anchored at the origin and contained in
      $[0,1)^2$.
      The gray grid is formed from the borders of the rectangles.
      Outlined: the rectangles
      $R = [0,2^{-1}) \times [0,2^{-2})$
      and $S = [0,1) \times [0,2^{-1})$,
      together with their associated sets $E(R)$ and $E(S)$.
    }
    \label{figure:chains_example}
  \end{figure}

  For every $x \in [0,2^{-N})^d$ we have the estimate
  \begin{align*}
    h_{\mathcal{F}}(x) &\geq \#\big(\big\{m \in \mathbb{Z}_{\geq 0}^d:\,
    m_i \leq N \text{ for all $i$} \big\}\big) \\
    &= (N+1)^d.
  \end{align*}
  Therefore, for all $N \geq 1$
  \begin{align}
    \label{sharp1:level_set_eta_sparse}
    |h_{\mathcal{F}} > N^d| \geq 2^{-Nd}.
  \end{align}

  This example is not completely satisfactory yet because it is not
  $\frac{1}{2}$-sparse.
  In order to pass to a $\frac{1}{2}$-sparse version we can use Theorem 4.1 of
  \cite{Rey2024}, which implies that there exist $L = \mathcal{O}(2^d)$
  subcollections $\mathcal{F}_i \subseteq \mathcal{F}$, each of which is
  $\frac{1}{2}$-sparse, and such that
  \begin{align*}
    \mathcal{F} = \bigcup_{i=1}^L \mathcal{F}_i.
  \end{align*}

  Since each $\mathcal{F}_i$ is $\frac{1}{2}$-sparse, by hypothesis we have, for
  all $1 \leq i \leq L$
  \begin{align*}
    \int_{\sh(\mathcal{F}_i)} \exp\big( c h_{\mathcal{F}_i}^\alpha \big)
    \leq C |\sh(\mathcal{F}_i)|.
  \end{align*}

  Let $K = L^{\max(0,\alpha-1)}$ then
  \begin{align} \label{sharp1:quasi}
    \bigg( \sum_{i=1}^L x_i \bigg)^\alpha \leq K \sum_{i=1}^L x_i^\alpha
  \end{align}
  is valid for all non-negative sequences $\{x_i\}$.

  Now, set $c_1 = \frac{c}{LK}$.
  Using \eqref{sharp1:quasi} and Hölder's inequality
  \begin{align*}
    \int_{\sh(\mathcal{F})} \exp\big( c_1 h_{\mathcal{F}}^\alpha \big)
    &\leq \int_{\sh(\mathcal{F})} \exp\Bigg(
      c_1 K \sum_{i=1}^L h_{\mathcal{F}_i}^\alpha
    \Bigg) \\
    &\leq \prod_{i=1}^L \Bigg(
      \int_{\sh(\mathcal{F})} \exp\big(c_1 LK h_{\mathcal{F}_i}^\alpha \big)
    \Bigg)^{\frac{1}{L}} \\
    &\leq \prod_{i=1}^L \Bigg( |\sh(\mathcal{F})| +
      \int_{\sh(\mathcal{F}_i)} \exp\big(c_1 LK h_{\mathcal{F}_i}^\alpha \big)
    \Bigg)^{\frac{1}{L}} \\
    &\leq \prod_{i=1}^L \big( 1 + C |\sh(\mathcal{F}_i)| \big)^{\frac{1}{L}}
    \\
    &\leq 1+C.
  \end{align*}
  Above, we have used that $\sh(\mathcal{F}) = [0,1)^d$, which has measure $1$.

  On the other hand, for all $N \geq 1$ we have, by
  \eqref{sharp1:level_set_eta_sparse}
  \begin{align*}
    \int_{\sh(\mathcal{F})} \exp(c_1 h_{\mathcal{F}}^{\alpha})
    &\geq 2^{-Nd} \exp(c_1 N^{d\alpha})
    = \exp\Big( c_1 N^{\alpha d} -\widetilde{c} N \Big),
  \end{align*}
  where the $\widetilde{c}$ is a positive dimensional constant.
  If $\alpha > \frac{1}{d}$ this expression tends to $\infty$ and we reach a
  contradiction.
\end{proof}

\subsubsection{The simplex construction}

The example contained in the next theorem shows that, under the antichain
condition, the exponent cannot be raised above $\frac{1}{d-1}$.

\begin{theorem} \label{sharpness:simplex}
  Fix $\alpha > 0$ and a dimension $d \geq 2$.
  Suppose there exist finite positive
  constants $c$ and $C$ such that
  \begin{align*}
    \int_{\sh(\mathcal{E})} \exp\big( c h_{\mathcal{E}}^\alpha \big)
    \leq C |\sh(\mathcal{E})|
  \end{align*}
  holds for all $\frac{1}{2}$-sparse antichains of dyadic $d$-dimensional
  rectangles. Then $\alpha \leq \frac{1}{d-1}$.
\end{theorem}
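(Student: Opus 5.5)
The plan is to mimic the proof of the previous theorem, replacing the full corner family by a single ``simplex slice'' of it, which is automatically an antichain. Fix $N \geq 1$ and consider
\begin{align*}
  \mathcal{F}_N := \Big\{ \prod_{i=1}^d \big[0, 2^{-m_i}\big):\,
    m \in \mathbb{Z}_{\geq 0}^d,\ m_1 + \dots + m_d = N \Big\}.
\end{align*}

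\emph{Structure of $\mathcal{F}_N$.} First I check three properties.
\begin{enumerate}[label=(\roman*)]
  \item $\mathcal{F}_N$ is an antichain. Containment of anchored rectangles $R_m \subseteq R_{m'}$ means $m_i \geq m'_i$ for every $i$. Since $\sum m_i = \sum m'_i$, this forces $m = m'$.
  \item $\mathcal{F}_N$ is $2^{-d}$-sparse. I use the same sets as before, $E(R_m) = \prod_i [2^{-m_i-1}, 2^{-m_i})$. These are pairwise disjoint for distinct $m$, and $|E(R_m)| = 2^{-d}|R_m|$.
  \item $\sh(\mathcal{F}_N) \subseteq [0,1)^d$, so $|\sh(\mathcal{F}_N)| \leq 1$.
\end{enumerate}

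\emph{Height on the small cube.} Every $x \in [0,2^{-N})^d$ lies in every member of $\mathcal{F}_N$, because each $m_i \leq N$. Therefore
\begin{align*}
  h_{\mathcal{F}_N}(x) \geq \#\mathcal{F}_N = \binom{N+d-1}{d-1} \geq \frac{N^{d-1}}{(d-1)!}
  \qquad \text{on a set of measure } 2^{-Nd}.
\end{align*}

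\emph{Passing to $\tfrac{1}{2}$-sparse subfamilies.} To apply the hypothesis, I again invoke Theorem 4.1 of \cite{Rey2024}. It splits $\mathcal{F}_N$ into $L = \mathcal{O}(2^d)$ subfamilies $\mathcal{F}_{N,i}$, each $\tfrac{1}{2}$-sparse. Each $\mathcal{F}_{N,i}$ is still an antichain, since it is a subfamily of one, so the hypothesis applies to it. Repeating verbatim the H\"older argument with the quasi-triangle inequality \eqref{sharp1:quasi} gives
\begin{align*}
  \int_{\sh(\mathcal{F}_N)} \exp\big(c_1 h_{\mathcal{F}_N}^\alpha\big) \leq 1 + C,
\end{align*}
with $c_1 = c/(LK)$ independent of $N$. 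This step is where care is needed: the splitting must be uniform in $N$ and must preserve the antichain property. The first holds because the decomposition count depends only on $d$; the second is automatic.

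\emph{Conclusion.} Combining the two estimates,
\begin{align*}
  1 + C \geq 2^{-Nd} \exp\Big( c_1 \big(N^{d-1}/(d-1)!\big)^{\alpha} \Big)
  = \exp\big( c_2 N^{(d-1)\alpha} - \widetilde{c} N \big).
\end{align*}
If $\alpha > \frac{1}{d-1}$, the right-hand side tends to $\infty$ as $N \to \infty$, which is a contradiction. Hence $\alpha \leq \frac{1}{d-1}$.
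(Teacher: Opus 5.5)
Your proof is correct, but you reach $\frac{1}{2}$-sparseness by a different mechanism from the paper.

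The paper uses the unit-volume simplex $\mathcal{E}_N = \{R_a : \sum_i a_i = 0,\ a_i \geq -N\}$, whose shadow has measure $\asymp N^{d-1}$. It extracts a single \eqref{p2}, hence $\frac{1}{2}$-sparse, subfamily $\mathcal{F}_N$ using its own selection theorem, Theorem \ref{zygmund:Selection}. Because all members have the same volume, the cardinality survives the selection: $\#(\mathcal{F}_N) \asymp |\sh(\mathcal{F}_N)| \asymp N^{d-1}$. The hypothesis is then applied once, with right-hand side $C|\sh(\mathcal{F}_N)| \lesssim N^{d-1}$, which is beaten by the lower bound $2^{-Nd}\exp(c\,\#(\mathcal{F}_N)^\alpha)$ when $\alpha > \frac{1}{d-1}$.

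You instead keep the whole volume-$2^{-N}$ simplex slice of the corner family. The height on $[0,2^{-N})^d$ is then exactly $\binom{N+d-1}{d-1}$ and the shadow has measure at most $1$. You obtain $\frac{1}{2}$-sparse pieces from the splitting theorem of \cite{Rey2024}, followed by the H\"older and quasi-triangle step of the preceding proof.

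The paper's route is self-contained and avoids both the external splitting result and the loss $c \mapsto c/(LK)$, at the price of re-counting after the selection. Yours reuses the earlier argument verbatim and gives an $N$-independent upper bound $1+C$.

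Your uniformity in $N$ is even cleaner than you state. $\mathcal{F}_N$ is a subfamily of the corner family $\mathcal{F}$ of the previous theorem, so you may intersect the fixed decomposition $\mathcal{F} = \bigcup_i \mathcal{F}_i$ with $\mathcal{F}_N$. Each $\mathcal{F}_i \cap \mathcal{F}_N$ is $\frac{1}{2}$-sparse, as a subfamily of $\mathcal{F}_i$, and an antichain, as a subfamily of $\mathcal{F}_N$. Conversely, the \eqref{p2} selection would also have worked for your family, since its members likewise share a common volume.
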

\begin{proof}
  Fix a large integer $N$ and consider the measure-one dyadic rectangles,
  containing the origin, and with no side shorter than $2^{-N}$:
  \begin{align*}
    \mathcal{E}_N := \Big\{ R_a:\,
      a \in \mathbb{Z}^d, \ \textstyle\sum_i a_i = 0, \ a_i \geq -N \Big\},
  \end{align*}
  where $R_a$ is the dyadic rectangle $R_a = \prod_{i=1}^d [0,2^{a_i})$.

  The exponent vectors $a$ range over the lattice points of a simplex:
  substituting $b_i := a_i + N$, they correspond to the nonnegative integer
  solutions of $b_1 + \dots + b_d = dN$, of which there are exactly
  $\binom{dN+d-1}{d-1}$ by the standard stars and bars count, so
  $\#(\mathcal{E}_N) = \binom{dN+d-1}{d-1} \asymp N^{d-1}$.

  The family is an antichain because all its members have the same volume,
  also it is $2^{-d}$-sparse since we can set, like before,
  \begin{align*}
    E(R_a) := \prod_{i=1}^d [2^{a_i-1}, 2^{a_i}),
  \end{align*}
  which are pairwise disjoint.
  In particular,
  \begin{align} \label{example:shadow}
    |\sh(\mathcal{E}_N)| \asymp N^{d-1}.
  \end{align}

  Like in the previous example, we now show how to obtain a
  $\frac{1}{2}$-sparse subcollection with similar properties.
  In this case we can just use the \eqref{p2} selection theorem.

  By Theorem \ref{zygmund:Selection}, we can extract a \eqref{p2} subcollection
  $\mathcal{F}_N \subseteq \mathcal{E}_N$ with
  $|\sh(\mathcal{F}_N)| \asymp |\sh(\mathcal{E}_N)|$.

  We can directly test with $\mathcal{F}_N$.
  Every rectangle $R_a \in
  \mathcal{E}_N$ contains the set $[0,2^{-N})^d$, thus
  \begin{align*}
    \int_{\sh(\mathcal{F}_N)} \exp\big(c h_{\mathcal{F}_N}^\alpha \big)
    &\geq \int_{[0,2^{-N})^d} \exp\big(c h_{\mathcal{F}_N}^\alpha \big) \\
    &= 2^{-Nd} \exp\big(c \#(\mathcal{F}_N)^\alpha \big).
  \end{align*}

  Since every element of $\mathcal{E}_N$ has unit measure, and it is sparse, we
  have
  \begin{align*}
    \#(\mathcal{F}_N) \asymp |\sh(\mathcal{F}_N)| \asymp N^{d-1}.
  \end{align*}
  Therefore, we have shown, for some finite positive constant $\widetilde{c}$,
  that for all $N \geq 1$
  \begin{align*}
    \int_{\sh(\mathcal{F}_N)} \exp\big(c h_{\mathcal{F}_N}^\alpha \big)
    &\geq \exp(\widetilde{c}(N^{\alpha(d-1)}-N)).
  \end{align*}
  Like before, we reach a contradiction if $\alpha > \frac{1}{d-1}$.
\end{proof}

\section{From height functions to strong-type bounds} \label{sec:strong_type}

In the previous section we have shown that, if $\mathcal{E}$ is a dyadic sparse
antichain of rectangles in $\mathbb{R}^2$, then the height function satisfies
an exponential-integrability estimate like
\begin{align*}
  \int_{\sh(\mathcal{E})} \exp(ch_{\mathcal{E}}) \lesssim |\sh(\mathcal{E})|.
\end{align*}

Here we show how this bound can be used to provide strong-type bounds for
the associated maximal functions.
We will prove the result in dimension two, but this part of the article
easily generalizes to higher dimensions assuming that the corresponding
dimensional tail bound also holds.
Additionally, we show that, for the maximal function estimate, the sparse
condition can be removed.

In particular, we will prove Theorem \ref{MT:SharpLpForAntichainM}. That is,
\begin{align} \label{strong_type:ineq}
  \|\mathcal{M}_{\mathcal{E}}f\|_{L^p(\mathbb{R}^2)} \lesssim p'
  \|f\|_{L^p(\mathbb{R}^2)}
\end{align}
for all $f \in L^p(\mathbb{R}^2)$ and all $1 < p < \infty$.
This inequality is sharp, as shown by testing it on simplices as in Theorem
\ref{sharpness:simplex}.
In particular, consider the dyadic antichain $\mathcal{E}$ given by
\begin{align*}
  \mathcal{E} = \{R_m :\, m \in \mathbb{Z} \},
\end{align*}
where $R_m = [0,2^m) \times [0,2^{-m})$. For every integer $N \geq 0$ define
\begin{align*}
  f_N = 2^{2N} \ind_{[0,2^{-N})^2}.
\end{align*}
Hence $\langle f_N \rangle_{R_m} = 1$ for all integers $m$ with $|m| \leq N$.
Setting $\mathcal{E}_N = \{R_m:\, |m| \leq N\}$ we have
\begin{align*}
  \int_{\mathbb{R}^2} \big(\mathcal{M}_{\mathcal{E}} f_N\big)^p
  &\geq \int_{\sh(\mathcal{E}_N)} \big( \mathcal{M}_{\mathcal{E}} f_N \big)^p \\
  &\geq |\sh(\mathcal{E}_N)| \asymp N,
\end{align*}
for all $N \geq 1$.
Above, we have used that $\mathcal{E}$ is $\frac{1}{2}$-sparse in the last step.
One easily computes $\|f_N\|_{L^p(\mathbb{R}^2)} = 2^{\frac{2N}{p'}}$, thus
\begin{align*}
  \frac{\|\mathcal{M}_{\mathcal{E}}f_N\|_{L^p(\mathbb{R}^2)}}
  {\|f_N\|_{L^p(\mathbb{R}^2)}} \gtrsim N^{\frac{1}{p}}2^{-\frac{2N}{p'}}
\end{align*}
for all $N \geq 1$.

Fix any $1 < p \leq 2$ and set $N = \lceil p' \rceil$.
Then, using the estimate above we obtain
\begin{align*}
  \frac{\|\mathcal{M}_{\mathcal{E}}f_N\|_{L^p(\mathbb{R}^2)}}
  {\|f_N\|_{L^p(\mathbb{R}^2)}} \gtrsim (p')^{\frac{1}{p}} = p'
  (p')^{-\frac{1}{p'}}.
\end{align*}
The factor $(p')^{\frac{1}{p'}}$ is bounded above and below by universal
constants when $1 < p \leq 2$, so the sharpness of \eqref{strong_type:ineq}
follows.

We now continue to the proof of Theorem \ref{MT:SharpLpForAntichainM}.
First, we remove the sparse condition using a result from \cite{CLR2026}, which
we restate adapted to our setting:
\begin{theorem}[Theorem 1.5 of \cite{CLR2026}]
  \label{CLR:SparseDomination}
  Given a countable collection $\mathcal{E}$ of sets of finite measure
  and $f \in L^\infty(\mathbb{R}^2)$ there exists a $\frac{1}{2}$-sparse
  subfamily $\mathcal{F} \subseteq \mathcal{E}$, such that
  \begin{align*}
    \mathcal{M}_{\mathcal{E}} f \leq
    2 \mathcal{P}_{\mathcal{E}}( \mathcal{M}_{\mathcal{F}} f).
  \end{align*}
\end{theorem}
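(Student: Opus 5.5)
We run a greedy, Córdoba--Fefferman type stopping-time selection, ordered by the size of the averages. Throughout we take $\mathcal{P}_{\mathcal{E}}$ to be the maximal median operator,
\[
\mathcal{P}_{\mathcal{E}}g(x)=\sup_{R\in\mathcal{E},\,x\in R}\sup\{\lambda:\ |R\cap\{g\geq \lambda\}|>\tfrac12|R|\}.
\]
This is the form of the definition the argument needs; a "$\ge\frac12$" variant works the same way.

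\emph{Setup and ordering.} Assume $f\geq 0$, replacing $f$ by $|f|$ if necessary. Then $\langle f\rangle_R\leq\|f\|_{L^\infty}$ for every $R$. Discard the rectangles with $\langle f\rangle_R=0$, since they are trivially handled. Split the rest into levels
\[
\mathcal{E}_k=\{R\in\mathcal{E}:\ 2^{k}<\langle f\rangle_R\leq 2^{k+1}\},\qquad k\in\mathbb{Z}.
\]
Only levels with $2^k<\|f\|_\infty$ are nonempty, so the levels are bounded above. Enumerate $\mathcal{E}$ so that all of $\mathcal{E}_k$ comes before all of $\mathcal{E}_{k-1}$. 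Within a level, use an arbitrary enumeration, which exists by countability. The resulting order has type at most $\omega\cdot\omega$, so it is a well-order and we may process rectangles one at a time. The dyadic grouping is what produces the constant $2$: an exact decreasing order of the averages need not be a well-order for an infinite family, and the sup defining $\mathcal{M}_{\mathcal{E}}$ may not be attained.

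\emph{Selection and sparseness.} Process the rectangles in this order. Let $\mathcal{F}_{<R}$ be the rectangles already selected when $R$ is reached. Select $R$ into $\mathcal{F}$ if and only if
\[
|R\cap \sh(\mathcal{F}_{<R})|\leq\tfrac12|R|.
\]
For a selected $R$ set $E(R)=R\setminus\sh(\mathcal{F}_{<R})$. Then $|E(R)|\geq\frac12|R|$ by the selection rule. The sets $E(R)$ are pairwise disjoint: if $S$ is selected before $R$, then $S\subseteq\sh(\mathcal{F}_{<R})$, so $E(R)\cap S=\emptyset$. Hence $\mathcal{F}$ is $\frac12$-sparse.

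\emph{Domination.} Fix $x$ and any $R\in\mathcal{E}$ with $x\in R$, say $R\in\mathcal{E}_k$. It suffices to show $\langle f\rangle_R\leq 2\,\mathcal{P}_{\mathcal{E}}(\mathcal{M}_{\mathcal{F}}f)(x)$ and then take the supremum over $R$.
\begin{itemize}
\item If $R$ was selected, then $\mathcal{M}_{\mathcal{F}}f\geq\langle f\rangle_R$ on all of $R$. So $\langle f\rangle_R$ is at most the median of $\mathcal{M}_{\mathcal{F}}f$ over $R$.
\item If $R$ was rejected, more than half of $R$ lies in $\sh(\mathcal{F}_{<R})$. Every $S\in\mathcal{F}_{<R}$ lies in a level $\mathcal{E}_j$ with $j\geq k$, hence $\langle f\rangle_S>2^k\geq\frac12\langle f\rangle_R$. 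Therefore $\mathcal{M}_{\mathcal{F}}f\geq \frac12\langle f\rangle_R$ on a subset of $R$ of measure greater than $\frac12|R|$. This gives $\frac12\langle f\rangle_R\leq\mathcal{P}_{\mathcal{E}}(\mathcal{M}_{\mathcal{F}}f)(x)$.
\end{itemize}

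\emph{Main obstacle.} The delicate point is not the geometry but the bookkeeping for infinite families. One must check that the level-by-level greedy process, indexed by a countable well-order, still gives disjoint sets $E(R)$. It does, because each $E(R)$ removes exactly the shadow of the rectangles selected before $R$. One must also check that the strict-versus-nonstrict conventions in the median match the selection threshold $\frac12$. If the paper's $\mathcal{P}_{\mathcal{E}}$ uses the "$\geq\frac12|R|$" convention instead, the selection rule should be changed to the strict inequality $|R\cap\sh(\mathcal{F}_{<R})|<\frac12|R|$, and the same argument goes through.
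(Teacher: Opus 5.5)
Your argument is correct and is essentially the proof the paper has in mind: the paper only cites \cite{CLR2026}, noting that the proof rests on the Córdoba--Fefferman greedy selection with the $\frac12$-overlap threshold, and that is exactly your stopping time run through dyadically grouped decreasing averages (the grouping gives the constant $2$, and $f\in L^\infty$ makes the order a well-order). Your median also coincides with the paper's $\mathcal{P}^{1/2}(g;R)=\inf\{\lambda\ge 0:\ |\{g>\lambda\}\cap R|\le\frac12|R|\}$, so the non-strict selection rule $|R\cap\sh(\mathcal{F}_{<R})|\le\frac12|R|$ works as written and no change of convention is needed.
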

The proof of this theorem is based on the selection algorithm from
\cite{CordobaFefferman1975}.
Before showing how it is used, let us explain the notation.
Above, $\mathcal{P}_{\mathcal{E}}$ denotes the ``maximal median''
operator associated to $\mathcal{E}$, which we define next.

For a given set $\Omega$ with finite measure, we define, for $r \in (0,1]$ and
measurable $f \geq 0$, the $r$-median of $f$ over $\Omega$ as
\begin{align*}
  \mathcal{P}^r(f;\, \Omega) = \inf\Big\{\lambda \geq  0:\,
    |\{x \in \Omega:\, f(x) > \lambda\}| \leq r|\Omega|
  \Big\}.
\end{align*}
Then, the maximal median operator associated to a family $\mathcal{E}$ is
defined as
\begin{align*}
  \mathcal{P}^r_{\mathcal{E}} f =
    \sup_{R \in \mathcal{E}} \ind_R \mathcal{P}^r(f;\, R).
\end{align*}
We omit the $r$ parameter when it is $\frac{1}{2}$:
\begin{align*}
  \mathcal{P}_{\mathcal{E}}f := \mathcal{P}^{\frac{1}{2}}_{\mathcal{E}}f.
\end{align*}

The operator $\mathcal{P}_{\mathcal{E}}$ is bounded on every $L^p$.
In fact we have the following stronger estimate.
Recall that $\mathcal{D}_d$ denotes the collection of all dyadic rectangles in
$\mathbb{R}^d$.
\begin{proposition} \label{smm}
  Let $0 < r \leq 1$, then for every measurable $f \geq 0$ and every real number
  $\lambda > 0$
  \begin{align*}
    |\mathcal{P}^r_{\mathcal{D}_d} f > \lambda| \leq \frac{4^d}{r^2}
    |f > \lambda|.
  \end{align*}
  In particular,
  \begin{align*}
    \|\mathcal{P}_{\mathcal{D}_d} f \|_{L^p(\mathbb{R}^d)} \leq
    4^{\frac{d+1}{p}}\|f\|_{L^p(\mathbb{R}^d)}.
  \end{align*}
\end{proposition}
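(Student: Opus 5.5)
The plan is to compare the level set of the median operator with the level set of a strong maximal function of an indicator. Fix $\lambda > 0$ and set $A := \{f > \lambda\}$. If $|A| = \infty$ there is nothing to prove, so assume $|A| < \infty$.

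First, I will show that $\{\mathcal{P}^r_{\mathcal{D}_d} f > \lambda\}$ is the union of those $R \in \mathcal{D}_d$ with $\mathcal{P}^r(f;\,R) > \lambda$. For such an $R$, the definition of the $r$-median as an infimum gives $|\{x \in R:\, f(x) > \lambda\}| > r|R|$. Indeed, if $|\{x\in R: f>\lambda\}| \le r|R|$ held, then $\lambda$ would be admissible and the infimum would be at most $\lambda$. Hence $\langle \ind_A \rangle_R > r$ for every such $R$, and therefore
\begin{align*}
  \{\mathcal{P}^r_{\mathcal{D}_d} f > \lambda\} \subseteq \{\mathcal{M}_{\mathcal{D}_d} \ind_A > r\}.
\end{align*}

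Next, I will bound the right-hand side using \eqref{doob} with $p=2$ and Chebyshev's inequality:
\begin{align*}
  |\mathcal{M}_{\mathcal{D}_d}\ind_A > r| \le r^{-2}\|\mathcal{M}_{\mathcal{D}_d}\ind_A\|_{L^2}^2 \le r^{-2} 4^d |A|.
\end{align*}
This is exactly the first claim, $|\mathcal{P}^r_{\mathcal{D}_d} f > \lambda| \le \frac{4^d}{r^2}|f > \lambda|$. A minor measurability point needs care. The supremum runs over countably many dyadic rectangles, so the level set is a countable union of rectangles and is measurable.

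For the $L^p$ bound I will set $r = \tfrac12$, which gives $|\mathcal{P}_{\mathcal{D}_d} f > \lambda| \le 4^{d+1}|f>\lambda|$. The layer-cake formula then yields
\begin{align*}
  \|\mathcal{P}_{\mathcal{D}_d} f\|_p^p = p\int_0^\infty \lambda^{p-1}|\mathcal{P}_{\mathcal{D}_d} f>\lambda|\,d\lambda \le 4^{d+1}\|f\|_p^p.
\end{align*}
Taking $p$-th roots gives the constant $4^{\frac{d+1}{p}}$.

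I expect no real obstacle. The only delicate step is the first inclusion, where the strict inequality has to be extracted carefully from the infimum definition of the median.
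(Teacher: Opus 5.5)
Your proof is correct and follows the paper's argument. The paper cites the identity $\{\mathcal{P}^r_{\mathcal{D}_d}f > \lambda\} = \{\mathcal{M}_{\mathcal{D}_d}(\ind_{f>\lambda}) > r\}$, of which you only need the inclusion $\subseteq$, and you prove it carefully; it then applies Chebyshev with the $L^2$ bound \eqref{doob}, exactly as you do, and your layer-cake step with $r=\tfrac12$ spells out the $L^p$ consequence that the paper leaves implicit.
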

\begin{proof}
  This follows from the identity
  \begin{align*}
    \{\mathcal{P}^r_{\mathcal{D}_d}f > \lambda\} =
    \{\mathcal{M}_{\mathcal{D}_d}(\ind_{f> \lambda}) > r \}.
  \end{align*}
  We can now use Chebyshev's inequality, and the $L^2$ bound of the
  dyadic strong maximal function, that is \eqref{doob} with $p=2$, to
  conclude
  \begin{align*}
    |\mathcal{P}^r_{\mathcal{D}_d}f > \lambda|
    &\leq \frac{1}{r^2} \big\| \mathcal{M}_{\mathcal{D}_d}(\ind_{f >
      \lambda}) \big\|_{L^2(\mathbb{R}^d)}^2
    \leq \frac{4^d}{r^2} |f > \lambda|.
  \end{align*}
\end{proof}

\begin{remark*}
  We have made no attempt to obtain the sharp constants in the previous result.
  In fact, the dependence on $r$ can be improved significantly to obtain
  \begin{align*}
    |\mathcal{P}_{\mathcal{D}_d}^r f > \lambda| \lesssim_d
     \frac{\log(e + r^{-1})^d}{r} |f > \lambda|
  \end{align*}
  using the weak-type of the strong maximal function.
  We shall not need such precise behavior on $r$, so we omit the details.
\end{remark*}

Since subcollections of antichains retain the antichain property,
for any dyadic antichain $\mathcal{E}$ and any bounded measurable
$f \geq 0$, there exists a $\frac{1}{2}$-\emph{sparse} dyadic antichain
$\mathcal{F}$ such that
\begin{align*}
  \|\mathcal{M}_{\mathcal{E}}f\|_{L^p(\mathbb{R}^2)} \leq
  128\|\mathcal{M}_{\mathcal{F}} f\|_{L^p(\mathbb{R}^2)}.
\end{align*}
Indeed, by Proposition \ref{smm}
\begin{align*}
  \|\mathcal{M}_{\mathcal{E}}f\|_{L^p(\mathbb{R}^2)} &\leq
  2\|\mathcal{P}_{\mathcal{E}}(
    \mathcal{M}_{\mathcal{F}} f)\|_{L^p(\mathbb{R}^2)} \\
  &\leq 2\|\mathcal{P}_{\mathcal{D}_2}(
    \mathcal{M}_{\mathcal{F}} f)\|_{L^p(\mathbb{R}^2)} \\
  &\leq 2 \cdot 4^{\frac{3}{p}}
    \|\mathcal{M}_{\mathcal{F}}f\|_{L^p(\mathbb{R}^2)}
  \leq 128 \|\mathcal{M}_{\mathcal{F}}f\|_{L^p(\mathbb{R}^2)}.
\end{align*}

For any collection $\mathcal{F}$ define the operator
\begin{align*}
  \mathcal{A}_{\mathcal{F}} f = \sum_{R \in \mathcal{F}} \langle f \rangle_R
  \ind_R.
\end{align*}
Summarizing: for every bounded measurable positive function $f$ and every dyadic
antichain $\mathcal{E}$ there exists a dyadic $\frac{1}{2}$-sparse antichain
$\mathcal{F}$ such that
\begin{align*}
  \|\mathcal{M}_{\mathcal{E}}f\|_{L^p(\mathbb{R}^2)}
  \leq 128 \|\mathcal{M}_{\mathcal{F}} f\|_{L^p(\mathbb{R}^2)}
  \leq 128 \|\mathcal{A}_{\mathcal{F}} f\|_{L^p(\mathbb{R}^2)},
\end{align*}
where we have used that
$\mathcal{M}_{\mathcal{F}}f \leq \mathcal{A}_{\mathcal{F}}|f|$ pointwise.
After a limiting argument, we conclude: for every two-dimensional dyadic
antichain $\mathcal{E}$
\begin{align*}
  \|\mathcal{M}_{\mathcal{E}}\|_{L^p(\mathbb{R}^2) \to L^p(\mathbb{R}^2)}
  &\leq 128\sup_{\mathcal{F}}
    \|\mathcal{A}_{\mathcal{F}}\|_{L^p(\mathbb{R}^2) \to L^p(\mathbb{R}^2)},
\end{align*}
where the supremum is taken over all finite two-dimensional dyadic
$\frac{1}{2}$-sparse antichains.

The strong maximal function $\mathcal{M}_{\mathcal{D}_2}$ dominates
$\mathcal{M}_{\mathcal{E}}$. Moreover, by \eqref{doob}, we already have
\begin{align*}
  \|\mathcal{M}_{\mathcal{D}_2}\|_{L^p(\mathbb{R}^2)} \leq 4
\end{align*}
for $p \geq 2$.

Therefore, to prove Theorem \ref{MT:SharpLpForAntichainM}  we only need to work
on the $1 < p \leq 2$ case.
Theorem \ref{MT:SharpLpForAntichainM} is therefore a consequence of the
following theorem.
\begin{theorem} \label{sharp_Lp_for_A}
  Let $\mathcal{E}$ be a finite $\frac{1}{2}$-sparse dyadic antichain of
  two-dimensional rectangles, then
  \begin{align} \label{sharp_Lp_for_A:ineq}
    \|\mathcal{A}_{\mathcal{E}}f\|_{L^p(\mathbb{R}^2)} \lesssim pp'
    \|f\|_{L^p(\mathbb{R}^2)} \qquad \text{for all }1 < p < \infty.
  \end{align}
\end{theorem}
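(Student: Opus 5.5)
We prove \eqref{sharp_Lp_for_A:ineq} by establishing restricted strong-type bounds for $\mathcal{A}_{\mathcal{E}}$ with explicit constants, and then interpolating with a Stein--Weiss type theorem that keeps track of the dependence on $p$ (Appendix \ref{app:interpolation}). Since $\mathcal{A}_{\mathcal{E}}$ is self-adjoint,
\begin{align*}
  \langle \mathcal{A}_{\mathcal{E}} \ind_F, \ind_G \rangle
  = \sum_{R \in \mathcal{E}} \frac{|R \cap F||R \cap G|}{|R|},
\end{align*}
so it suffices to bound this bilinear form by $C p' |F|^{1/p}|G|^{1/p'}$, uniformly over measurable sets $F,G$ and for $1<p\le 2$. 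The range $p \geq 2$ then follows by duality, and the two ranges combine into the symmetric factor $pp'$.

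\textbf{Step 1: the height-function bound.} First bound the form by $\int_F h_{\mathcal{E}'}$ for a suitable subfamily. Discard the rectangles with $|R\cap G|$ very small relative to $|R|$, and organize the rest into level families
\begin{align*}
  \mathcal{E}_j = \{R : 2^{-j-1}|R| < |R\cap G| \le 2^{-j}|R|\}.
\end{align*}
Each $\mathcal{E}_j$ is a $\frac12$-sparse dyadic antichain, so Theorem \ref{MT:ExpInt} applies to it. Moreover $\sh(\mathcal{E}_j) \subseteq \{\mathcal{M}_{\mathcal{D}_2}\ind_G > 2^{-j-1}\}$, hence $|\sh(\mathcal{E}_j)| \lesssim 2^{2j}|G|$ by the $L^2$ bound \eqref{doob}. (The $L^2$ bound gives the factor $2^{2j}$; the weak $L\log L$ bound would give $j2^j$.) The contribution of level $j$ is at most $2^{-j}\int_F h_{\mathcal{E}_j}$.

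\textbf{Step 2: using exponential integrability.} By Theorem \ref{MT:ExpInt}, $|\{h_{\mathcal{E}_j} > \lambda\}| \lesssim e^{-c\lambda}|\sh(\mathcal{E}_j)|$. Therefore
\begin{align*}
  \int_F h_{\mathcal{E}_j} \lesssim |F| \log\big(e + |\sh(\mathcal{E}_j)|/|F|\big) + |\sh(\mathcal{E}_j)|\, e^{-c \cdot (\text{threshold})},
\end{align*}
or more simply $\int_F h \le \lambda|F| + \int_{h>\lambda} h$, optimized in $\lambda$. This yields the level-$j$ term
\begin{align*}
  2^{-j}|F|\big(1+\log_+(2^{2j}|G|/|F|)\big) \wedge 2^{j}|G|,
\end{align*}
where the second branch uses $\sum_{R \in \mathcal{E}_j}|R| \lesssim 2^{2j}|G|$ together with the trivial bound $|R\cap F|\le |R|$. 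Summing over $j$ gives an Orlicz-type estimate of the form
\begin{align*}
  \langle \mathcal{A}\ind_F,\ind_G\rangle \lesssim |F|\,\phi(|G|/|F|)
\end{align*}
with $\phi(t)\approx \sqrt{t}\,(1+\log_+ t)$ or better. Since $s^{1/p'}\log(e+s)$ grows like $p'$ relative to $s^{1/p'}$ after optimizing, this gives $\lesssim p'|F|^{1/p}|G|^{1/p'}$ when $|G|\ge |F|$; the case $|G|<|F|$ is handled by the trivial $L^\infty$ bound. The crucial point is that only one logarithm appears. That is exactly where the antichain property enters, through Theorem \ref{MT:ExpInt}; without it one would get the square of the logarithm, and hence $(p')^2$.

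\textbf{Main obstacle.} Making the $j$-summation genuinely produce only $p'$ rather than $(p')^2$. A naive count over the levels $j$ adds an extra logarithm, so the splitting must be done so that the level sum converges geometrically once $2^{j}$ exceeds $(|F|/|G|)^{1/2}$-type thresholds, while the logarithm comes only from the height function. I would first try instead to avoid splitting in $G$ altogether: bound $\int_F \mathcal{A}_{\mathcal{E}}\ind_G$ directly by duality against $\exp(c h_{\mathcal{E}})$ via Young's inequality, as in the proof deducing Zygmund from Conjecture \ref{conjecture:antichain_exp_int}. The final step is then to feed the restricted bounds into the Appendix interpolation theorem with the explicit constant.
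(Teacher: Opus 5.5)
Your density splitting combined with Theorem \ref{MT:ExpInt} (through Proposition \ref{height_function_bounds}(b)) is the paper's mechanism, and it does yield the set estimate you want. The problems are one false step and, more seriously, the passage from sets to functions.

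First, the set estimate. Your crude bound $|\sh(\mathcal{E}_j)|\lesssim 4^j|G|$ enters only inside the logarithm, so $\int_F\mathcal{A}_{\mathcal{E}}\ind_G\le\sum_j 2^{-j}K|F|\log(e+C4^j|G|/|F|)\lesssim|F|\log(e+|G|/|F|)$ for all $F,G$. The $j$-sum is geometric, so your ``main obstacle'' never arises. The profile $\phi(t)\approx\sqrt{t}\,(1+\log_+t)$ you wrote would in fact be too weak for every $p<2$ once $|G|\gg|F|$.

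The case $|G|<|F|$ cannot be handled by an $L^\infty$ bound, because there is none. Indeed $\mathcal{A}_{\mathcal{E}}\ind_{\sh(\mathcal{E})}=h_{\mathcal{E}}$, which equals $2N+1$ near the origin for the $\frac12$-sparse antichain $\{[0,2^m)\times[0,2^{-m})\}_{|m|\le N}$. Use the symmetry of the form instead, which gives $|G|\log(e+|F|/|G|)$. Your fallback of dropping the density splitting also fails, since nothing then controls $|\sh(\mathcal{E})|$ by $|G|$.

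The genuine gap is the reduction ``it suffices to bound the bilinear form by $Cp'|F|^{1/p}|G|^{1/p'}$''. A bound for $\langle\mathcal{A}_{\mathcal{E}}\ind_F,\ind_G\rangle$ at one exponent is a restricted weak-type estimate, and it does not imply strong type at that exponent. Recovering strong type near $p=1$ by Marcinkiewicz or Stein--Weiss interpolation from exponents $r_0<p<r_1$ multiplies your constant $r_0'\ge p'$ by a factor of order at least $(p-r_0)^{-1}\gtrsim p'$. That route yields only $(p')^2$, which is exactly what you want to avoid.

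Theorem \ref{interpolation} is the right tool, since it works on the dual side and interpolates the $L^2$ bound against a restricted bound at exponent $2p$. But its hypothesis is restricted \emph{strong} type, $\|\mathcal{A}_{\mathcal{E}}\ind_G\|_{L^q}\lesssim q|G|^{1/q}$, and you never verify it. With your estimate this takes one more step:
\begin{itemize}
  \item Put $F=\{\mathcal{A}_{\mathcal{E}}\ind_G>\lambda\}$ into $\int_F\mathcal{A}_{\mathcal{E}}\ind_G\lesssim|F|\log(e+|G|/|F|)$. This gives $|F|\lesssim|G|e^{-c\lambda}$ for $\lambda\gtrsim1$.
  \item For small $\lambda$, the $L^2$ bound gives $|F|\lesssim\lambda^{-2}|G|$.
  \item The layer-cake formula then yields the hypothesis for $q\ge3$. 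This covers the only exponents, $q=2p\ge6$, at which the proof of Theorem \ref{interpolation} uses it.
\end{itemize}

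The paper instead obtains restricted strong type directly. It pairs with $f\in L^{q'}$ and uses Proposition \ref{height_function_bounds}(a), so that $\|\mathcal{A}_{\mathcal{E}}\ind_U\|_{L^q}\le 2Kq\sum_m 2^{-m}|\sh(\mathcal{E}_m)|^{1/q}$. There your shadow bound $4^m|U|$ would make the sum diverge at $q=2$. That is why the paper sharpens it to $m2^m|U|$ via Proposition \ref{height_function_bounds}(b) and Lemma \ref{inverse}. Testing against sets, as you do, avoids Lemma \ref{inverse}, but you pay for it with the extra distributional step above.
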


Having reduced to the sparse setting, we can now apply the results of the
previous section to prove Theorem \ref{sharp_Lp_for_A}.
We begin with two simple applications of Theorem \ref{MT:ExpInt}.
We keep track of the constants in the proposition to make the following
arguments cleaner.
\begin{proposition} \label{height_function_bounds}
  There exists a finite positive constant $K$ such that for every
  two-dimensional $\frac{1}{2}$-sparse dyadic antichain $\mathcal{E}$
  \begin{enumerate}
    \item[(a)] For every $p \geq 1$
      \begin{align*}
        \|h_{\mathcal{E}}\|_{L^p} \leq K p |\sh(\mathcal{E})|^{\frac{1}{p}}.
      \end{align*}
    \item[(b)] For every measurable set $U$ with $0 < |U| < \infty$
      \begin{align*}
        \int_{U} h_{\mathcal{E}} \leq K |U|\log\bigg(e +
        \frac{|\sh(\mathcal{E})|}{|U|}\bigg).
      \end{align*}
  \end{enumerate}
\end{proposition}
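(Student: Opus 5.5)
Both parts follow from Theorem \ref{MT:ExpInt}. Let $c, C$ be the constants it provides for $\frac{1}{2}$-sparse dyadic antichains; it gives
\begin{align*}
  \int_{\sh(\mathcal{E})} \exp(c h_{\mathcal{E}}) \leq C|\sh(\mathcal{E})|.
\end{align*}
Since $h_{\mathcal{E}}$ vanishes outside $\sh(\mathcal{E})$, every integral of a function of $h_{\mathcal{E}}$ that vanishes at $0$ lives on $\sh(\mathcal{E})$. Write $A := |\sh(\mathcal{E})|$. We may assume $A < \infty$ and $A > 0$; otherwise both statements are trivial.

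For (a), use the elementary inequality $t^p \leq \Gamma(p+1) e^{t}$ for $t \geq 0$. It follows from $e^t \geq t^p/\Gamma(p+1)$, and if one prefers to avoid non-integer $p$ it can be replaced by $t^p \leq (p/e)^p e^t$, which comes from maximizing $t^p e^{-t}$. Applying it with $t = c h_{\mathcal{E}}$ gives
\begin{align*}
  \int h_{\mathcal{E}}^p \leq c^{-p} (p/e)^p \int_{\sh(\mathcal{E})} e^{c h_{\mathcal{E}}}
  \leq C \, (p/(ce))^p A.
\end{align*}
Taking $p$-th roots yields $\|h_{\mathcal{E}}\|_{L^p} \leq C^{1/p} (ce)^{-1} p \, A^{1/p}$. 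Since $C^{1/p} \leq \max(C,1)$ for $p \geq 1$, this proves (a) with $K \geq \max(C,1)/(ce)$.

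For (b), use the Young-type duality between $x\log x$ and $e^{x}$ in the form
\begin{align*}
  ab \leq e^{a} + b\log(b) - b + 1 \leq e^a + b \log(e + b), \qquad a, b \geq 0,
\end{align*}
or equivalently the entropy/Jensen inequality. Fix $U$ and, for $\lambda > 0$ to be chosen, write
\begin{align*}
  \int_U c h_{\mathcal{E}} = \lambda \int_U \frac{c h_{\mathcal{E}}}{\lambda}.
\end{align*}
The cleanest route is Jensen applied to the probability measure $dx/|U|$ on $U$:
\begin{align*}
  \exp\Big(\frac{c}{|U|}\int_U h_{\mathcal{E}}\Big) \leq \frac{1}{|U|}\int_U e^{c h_{\mathcal{E}}}
  \leq \frac{1}{|U|}\Big(\int_{\sh(\mathcal{E})} e^{c h_{\mathcal{E}}} + |U|\Big) \leq \frac{CA}{|U|} + 1.
\end{align*}
The term $|U|$ accounts for the part of $U$ outside the shadow, where $e^{c h_{\mathcal{E}}} = 1$. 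Taking logarithms gives
\begin{align*}
  \int_U h_{\mathcal{E}} \leq c^{-1} |U| \log\big(1 + CA/|U|\big) \leq c^{-1}\max(C,1)\,|U|\log(e + A/|U|),
\end{align*}
using $\log(1+Cx) \leq \max(C,1)\log(e+x)$. Taking $K$ to be the larger of the two constants obtained proves both parts.

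There is no serious obstacle: everything is a direct consequence of Theorem \ref{MT:ExpInt}. The only point needing care is in (b), where $U$ need not lie inside $\sh(\mathcal{E})$. The extra $+|U|$ term absorbs this, and so the bound is uniform even when $|U| \gg A$.
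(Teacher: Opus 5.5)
Your proof is correct. Part (a) is the paper's argument: the paper uses $x^p \le p^p e^x$, and your $(p/e)^p$ is the sharp form of the same bound.

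Part (b) takes a different and shorter route. The paper's argument has three steps:
\begin{enumerate}
\item it writes $\int_U h_{\mathcal{E}} \le t|U| + \int_t^\infty |h_{\mathcal{E}} > \lambda|\,d\lambda$ via the layer-cake formula;
\item it turns Theorem \ref{MT:ExpInt} into the tail bound $|h_{\mathcal{E}} > \lambda| \le C e^{-c\lambda}|\sh(\mathcal{E})|$ via Chebyshev;
\item it optimizes the truncation level, taking $t = c^{-1}\log(1 + |\sh(\mathcal{E})|/|U|)$.
\end{enumerate}
Your Jensen step for $\exp$ with respect to $dx/|U|$ does all of this at once. It is exactly the statement $\langle h_{\mathcal{E}}\rangle_U \le c^{-1}\log\langle e^{c h_{\mathcal{E}}}\rangle_U$. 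The $+|U|$ term correctly handles $U \setminus \sh(\mathcal{E})$, and you get the slightly better constant $c^{-1}\max(C,1)$ instead of $2Cc^{-2}$.

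What the paper's route buys is that it only uses the level-set tail of $h_{\mathcal{E}}$. It would therefore work verbatim from a weak, distributional version of Theorem \ref{MT:ExpInt}. Yours needs the integral form, but that is exactly what Theorem \ref{MT:ExpInt} provides, so nothing is lost here.

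Two small blemishes, both in asides you never actually use:
\begin{enumerate}
\item In the chain $ab \le e^a + b\log b - b + 1 \le e^a + b\log(e+b)$, the second inequality is false for small $b$. At $b = 0.1$ one has $b\log b - b + 1 \approx 0.67$ while $b\log(e+b) \approx 0.10$. Drop the $+1$: the Legendre bound $ab \le e^a + b\log b - b$ already holds and gives the right-hand side. Alternatively, delete that aside together with the abandoned sentence introducing $\lambda$.
\item Justifying $t^p \le \Gamma(p+1)e^t$ by ``$e^t \ge t^p/\Gamma(p+1)$'' just restates the claim, since for non-integer $p$ it does not follow from the power series. Your $(p/e)^p$ bound makes it unnecessary anyway.
\end{enumerate}
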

\begin{proof}
  In this proof $c$ and $C$ are the constants in Theorem \ref{MT:ExpInt}.
  We may assume without loss of generality that $c \leq 1 \leq C$.

  We begin proving $(a)$. By elementary Calculus, $x^p \leq p^p e^x$, thus
  by Theorem \ref{MT:ExpInt}
  \begin{align*}
    \int h_{\mathcal{E}}^p \leq c^{-p}p^p \int_{\sh(\mathcal{E})}
    \exp(c h_{\mathcal{E}}) \leq C c^{-p} p^p |\sh(\mathcal{E})|,
  \end{align*}
  and hence
  \begin{align*}
    \|h_{\mathcal{E}}\|_{L^p} \leq C c^{-1} p
    |\sh(\mathcal{E})|^{\frac{1}{p}}.
  \end{align*}

  For $(b)$, fix a parameter $0 < t < \infty$, which we will optimize later.
  Using the layer-cake formula:
  \begin{align} \label{rtexp}
    \int_U h_{\mathcal{E}} &= \int_0^\infty |\{x \in U:\, h_{\mathcal{E}}(x) >
    \lambda \}| \, d\lambda \leq t|U| +
    \int_t^\infty |h_{\mathcal{E}} > \lambda| \, d\lambda.
  \end{align}

  Applying Chebyshev's inequality in Theorem \ref{MT:ExpInt}, we obtain
  \begin{align*}
    |h_{\mathcal{E}} > \lambda|
    \leq e^{-c \lambda} \int_{\sh(\mathcal{E})} \exp(c h_{\mathcal{E}}) \\
    \leq C e^{-c \lambda} |\sh(\mathcal{E})|.
  \end{align*}
  Plugging this estimate in \eqref{rtexp} yields
  \begin{align*}
    \int_{U} h_{\mathcal{E}} \leq t|U| +
    C c^{-1} |\sh(\mathcal{E})| e^{-c t}
    \leq C c^{-2} \Big( c t |U| + e^{-c t}|\sh(\mathcal{E})| \Big).
  \end{align*}
  Call $\Psi(s) := s|U| + e^{-s}|\sh(\mathcal{E})|$, then we have shown
  \begin{align*}
    \int_{U} h_{\mathcal{E}} \leq C c^{-2} \Psi(c t)
  \end{align*}
  for every $0 < t < \infty$. Choosing
  \begin{align*}
    t = c^{-1}\log\bigg( 1 + \frac{|\sh(\mathcal{E})|}{|U|} \bigg)
  \end{align*}
  gives
  \begin{align*}
    \int_{U} h_{\mathcal{E}} &\leq C c^{-2}
    |U|\bigg( 1 + \log\bigg(1+\frac{|\sh(\mathcal{E})|}{|U|} \bigg)\bigg) \\
    &\leq 2 C c^{-2}|U|\log\bigg( e + \frac{|\sh(\mathcal{E})|}{|U|} \bigg).
  \end{align*}

  It is clear that we can choose $K = 2C c^{-2}$ to arrive at the
  statement of the proposition.
\end{proof}

Before we continue, we first need the following elementary observation.
\begin{lemma} \label{inverse}
  Suppose $A \geq 1$ and $x > 0$, then
  \begin{align*}
    x \leq A \log(e+x) \implies x \leq 4A \log(e+A).
  \end{align*}
\end{lemma}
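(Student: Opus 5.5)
The plan is a contrapositive argument by cases: assume $x > 4A\log(e+A)$ and show that then $x > A\log(e+x)$. The key observation is that $\phi(x) = x - A\log(e+x)$ is increasing once $x \geq A$. Indeed,
\begin{align*}
  \phi'(x) = 1 - \frac{A}{e+x} > 0 \qquad \text{for } x > A - e.
\end{align*}
Since $A \geq 1$ gives $\log(e+A) \geq 1$, we have $x_0 := 4A\log(e+A) \geq 4A > A$. So it suffices to check $\phi(x_0) > 0$, that is,
\begin{align*}
  4A\log(e+A) > A\log\big(e + 4A\log(e+A)\big).
\end{align*}
Equivalently, $(e+A)^4 > e + 4A\log(e+A)$.

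This last inequality is elementary. I would first use $\log(e+A) \leq e+A$, so the right-hand side is at most
\begin{align*}
  e + 4A(e+A) \leq 4(e+A)^2.
\end{align*}
Then I would compare with $(e+A)^4 \geq (e+A)^2 \cdot e^2 > 4(e+A)^2$, using $e^2 > 4$. This gives strict positivity of $\phi(x_0)$. Monotonicity then gives $\phi(x) > 0$ for all $x > x_0$, which contradicts the hypothesis $x \leq A\log(e+x)$.

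No step is a real obstacle. The only care needed is to confirm that $x_0$ lies in the region where $\phi$ is increasing, which is why I check $x_0 \geq A$ explicitly, and to keep the crude numerical bound strict. If a cleaner route is preferred, one can instead substitute $x \leq A\log(e+x)$ into itself once, using $\log(e+x) \leq \log(e + A\log(e+x))$, together with $\log(e+y) \leq 2\sqrt{y}$-type estimates. The monotonicity argument above is the one I would write.
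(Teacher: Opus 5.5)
Your proof is correct and is essentially the paper's argument: the paper uses the globally increasing ratio $Q(x) = x/\log(e+x)$ and checks $Q(4A\log(e+A)) \geq A$, and since $\phi(x) = \log(e+x)\,(Q(x)-A)$ this is the same endpoint test as your $\phi(x_0) > 0$. The differences are cosmetic. Your difference $\phi$ is monotone only on $(A-e,\infty)$, and you correctly check that this interval contains $[x_0,\infty)$. You also verify the endpoint inequality by exponentiating to $(e+A)^4 > e + 4A\log(e+A)$, where the paper instead substitutes $t = \log(e+A)$.
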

\begin{proof}
  Define the function $Q(x) = \frac{x}{\log(e+x)}$.
  This function is strictly increasing for $x \geq 0$, tends to $\infty$ as
  $x \to \infty$, and satisfies $Q(0) = 0$.
  Thus, it has an increasing inverse $Q^{-1}$.
  We wish to estimate $Q^{-1}(A)$.

  Fix a large constant $C$, to be determined later.
  Let us estimate the value of $Q$ at $C A\log(e+A)$:
  \begin{align*}
    Q(CA\log(e+A)) = A \frac{C\log(e+A)}{\log(e+CA\log(e+A))}.
  \end{align*}
  Set $t = \log(e+A)$ and observe that $t > 1$. Changing variables
  \begin{align*}
    \frac{C\log(e+A)}{\log(e+CA\log(e+A))} &= \frac{Ct}{\log(e+CAt)} \\
    &= \frac{Ct}{\log(e+Ct(e^t-e))} \\
    &\geq \frac{Ct}{\log(e+Ct e^t)} \\
    &\geq \frac{Ct}{\log(e+Ce^{2t})} \\
    &\geq \frac{Ct}{\log(e^{2t}(C+1))} \\
    &= \frac{Ct}{2t + \log(C+1)}.
  \end{align*}
  This last expression is increasing in $t$, and thus we have found
  \begin{align*}
    Q(CA\log(e+A)) \geq A \bigg( \frac{C}{2+\log(C+1)}\bigg).
  \end{align*}
  If we find $C$ large enough that
  \begin{align*}
    \frac{C}{2+\log(C+1)} \geq 1,
  \end{align*}
  we will have shown that $x \leq CA\log(e+A)$. The constant $C=4$ suffices.
\end{proof}

We now arrive at the main theorem in this section, which is a
restricted strong-type result.
\begin{theorem} \label{restricted_strong}
  There exists a finite positive constant $C$ such that
  for every two-dimensional $\frac{1}{2}$-sparse dyadic antichain $\mathcal{E}$,
  every $p \geq2$,
  and every measurable set $U$,
  we have
  \begin{align*}
    \|\mathcal{A}_{\mathcal{E}} \ind_U\|_{L^p(\mathbb{R}^2)}
    \leq C p|U|^{\frac{1}{p}}.
  \end{align*}
\end{theorem}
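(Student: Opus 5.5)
The plan is to decompose $\mathcal{E}$ according to the size of the averages $\langle \ind_U \rangle_R$ and apply Proposition \ref{height_function_bounds} to each piece. Since $0 \leq \langle \ind_U\rangle_R \leq 1$, rectangles with $\langle \ind_U\rangle_R = 0$ contribute nothing. For every integer $j \geq 0$ set
\begin{align*}
  \mathcal{E}_j := \{R \in \mathcal{E}:\, 2^{-j-1} < \langle \ind_U \rangle_R \leq 2^{-j}\}.
\end{align*}
Then pointwise $\mathcal{A}_{\mathcal{E}}\ind_U \leq \sum_{j \geq 0} 2^{-j} h_{\mathcal{E}_j}$. Each $\mathcal{E}_j$ is a subcollection of a $\frac{1}{2}$-sparse dyadic antichain, hence is itself one. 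Proposition \ref{height_function_bounds}(a) and Minkowski's inequality then give
\begin{align*}
  \|\mathcal{A}_{\mathcal{E}}\ind_U\|_{L^p} \leq K p \sum_{j\geq 0} 2^{-j} |\sh(\mathcal{E}_j)|^{\frac{1}{p}}.
\end{align*}
So everything reduces to a good bound for $|\sh(\mathcal{E}_j)|$ in terms of $|U|$.

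The main step is this shadow estimate. Crude bounds such as $|\sh(\mathcal{E}_j)| \leq |\{\mathcal{M}_{\mathcal{D}_2}\ind_U > 2^{-j-1}\}| \lesssim 4^j|U|$, from the $L^2$ bound \eqref{doob}, are too lossy when $p = 2$. Instead I will use sparseness together with Proposition \ref{height_function_bounds}(b). By the Carleson packing condition and the definition of $\mathcal{E}_j$,
\begin{align*}
  |\sh(\mathcal{E}_j)| \leq \sum_{R\in\mathcal{E}_j}|R| < 2^{j+1}\sum_{R \in \mathcal{E}_j}|U\cap R| = 2^{j+1}\int_U h_{\mathcal{E}_j}.
\end{align*}
Applying Proposition \ref{height_function_bounds}(b), we may assume $0<|U|<\infty$, since otherwise the statement is trivial. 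This gives
\begin{align*}
  |\sh(\mathcal{E}_j)| \leq 2^{j+1}K|U|\log\big(e + |\sh(\mathcal{E}_j)|/|U|\big).
\end{align*}
Setting $x = |\sh(\mathcal{E}_j)|/|U|$ and $A = 2^{j+1}K \geq 1$ (enlarging $K$ if needed), Lemma \ref{inverse} yields
\begin{align*}
  |\sh(\mathcal{E}_j)| \leq 4A\log(e+A)|U| \lesssim (j+1)2^{j}|U|.
\end{align*}
The case $x = 0$ is trivial.

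Inserting this into the sum, we get
\begin{align*}
  \|\mathcal{A}_{\mathcal{E}}\ind_U\|_{L^p} \lesssim p|U|^{\frac1p}\sum_{j\geq0} 2^{-j(1-\frac1p)}(j+1)^{\frac1p}.
\end{align*}
For $p \geq 2$ we have $1 - \frac1p \geq \frac12$ and $(j+1)^{1/p} \leq j+1$, so the series is bounded by $\sum_j 2^{-j/2}(j+1)$, a universal constant. This gives $\|\mathcal{A}_{\mathcal{E}}\ind_U\|_{L^p} \leq Cp|U|^{1/p}$.

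The only delicate point is the self-referential shadow estimate. The logarithmic loss from (b) depends on $|\sh(\mathcal{E}_j)|$ itself, and Lemma \ref{inverse} is exactly what inverts it at the cost of only a factor $j$. That loss is harmless against the geometric decay $2^{-j/2}$.
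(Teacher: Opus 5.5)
Your proposal is correct and follows essentially the same route as the paper: it uses the same dyadic decomposition of $\mathcal{E}$ by the size of $\langle \ind_U \rangle_R$, the shadow bound $|\sh(\mathcal{E}_j)| \lesssim (j+1)2^j|U|$ from Proposition \ref{height_function_bounds}(b) inverted via Lemma \ref{inverse}, and Proposition \ref{height_function_bounds}(a) summed against the geometric decay. The differences are cosmetic: you apply Minkowski's inequality directly where the paper dualizes against $f \in L^{p'}$, and the step $|\sh(\mathcal{E}_j)| \leq \sum_{R \in \mathcal{E}_j}|R|$ is plain subadditivity rather than the Carleson packing condition.
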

\begin{proof}
  By duality, it suffices to show, for every measurable positive $f$ with
  $\|f\|_{L^{p'}} = 1$, that
  \begin{align*}
    \int \mathcal{A}_{\mathcal{E}}(\ind_U) f
    = \sum_{R \in \mathcal{E}} \langle \ind_U \rangle_R \int_R f
    \leq C p|U|^{\frac{1}{p}}.
  \end{align*}
  For every integer $m \geq 1$ define
  \begin{align*}
    \mathcal{E}_m = \bigg\{R \in \mathcal{E}:\,
    2^{-m} < \frac{|U \cap R|}{|R|} \leq 2^{-m+1} \bigg\}.
  \end{align*}
  After removing the rectangles which do not intersect $U$, every
  $R \in \mathcal{E}$ is in exactly one $\mathcal{E}_m$.
  Also, every $\mathcal{E}_m$ is a $\frac{1}{2}$-sparse dyadic antichain, so we
  can estimate:
  \begin{align*}
    |\sh(\mathcal{E}_m)| &\leq \sum_{R \in \mathcal{E}_m} |R| \\
    &\leq 2^m \sum_{R \in \mathcal{E}_m} |U \cap R| \\
    &= 2^m \int_U h_{\mathcal{E}_m}.
  \end{align*}
  Let $K$ be the constant provided by Proposition \ref{height_function_bounds}.
  By part (b) applied to the family $\mathcal{E}_m$, we have
  \begin{align*}
    |\sh(\mathcal{E}_m)| \leq 2^m  K |U|\log\bigg( e +
    \frac{|\sh(\mathcal{E}_m)|}{|U|} \bigg).
  \end{align*}
  Set $A = 2^m K$ and $x = \frac{|\sh(\mathcal{E}_m)|}{|U|}$, then we have
  \begin{align*}
    x \leq A\log(e+x).
  \end{align*}
  By Lemma \ref{inverse}, we arrive at
  \begin{align*}
    x \leq 4A \log(e+A) \implies |\sh(\mathcal{E}_m)|
    &\leq 2^{m+2}K \log(e + 2^m K)|U| \\
    &\leq \widetilde{C} m 2^m |U|,
  \end{align*}
  for some constant $1 \leq \widetilde{C} < \infty$.

  Now, we can estimate
  \begin{align*}
    \sum_{R \in \mathcal{E}} \langle \ind_U \rangle_R \int_R f
    &= \sum_{m=1}^\infty \sum_{R \in \mathcal{E}_m}
      \langle \ind_U \rangle_R \int_R f \\
    &\leq 2\sum_{m=1}^\infty \sum_{R \in \mathcal{E}_m} 2^{-m} \int_R f \\
    &\leq 2 \sum_{m=1}^\infty 2^{-m} \int f h_{\mathcal{E}_m} \\
    &\leq 2 \sum_{m=1}^\infty 2^{-m} \|h_{\mathcal{E}_m}\|_{L^p}.
  \end{align*}
  By Proposition \ref{height_function_bounds} (a), we have
  \begin{align*}
    \|h_{\mathcal{E}_m}\|_{L^p} \leq K p|\sh(\mathcal{E}_m)|^{\frac{1}{p}},
  \end{align*}
  thus we conclude
  \begin{align*}
    \|\mathcal{A}_{\mathcal{E}} \ind_U\|_{L^p} &\leq 2 K p \sum_{m=1}^\infty
    2^{-m} \big( \widetilde{C} m2^m|U| \big)^{\frac{1}{p}} \\
    &\leq 2 K \widetilde{C} p |U|^{\frac{1}{p}}
      \sum_{m=1}^\infty 2^{-m} m^{\frac{1}{p}} 2^{\frac{m}{p}} \\
    &\leq 2 K \widetilde{C} p |U|^{\frac{1}{p}}
      \sum_{m=1}^\infty 2^{-m\big(1-\frac{1}{p}\big)} m.
  \end{align*}
  Recall that $p \geq 2$, so $1 - \frac{1}{p} \geq \frac{1}{2}$, hence
  \begin{align*}
    \|\mathcal{A}_{\mathcal{E}} \ind_U\|_{L^p}
    &\leq 2 K \widetilde{C} p |U|^{\frac{1}{p}}
    \sum_{m=1}^\infty 2^{-\frac{m}{2}} m
    \leq C p|U|^{\frac{1}{p}}
  \end{align*}
  for some constant $1 \leq C < \infty$.
\end{proof}

Finally, we are ready to prove Theorem \ref{sharp_Lp_for_A}.
Having established the restricted strong-type bound of Theorem
\ref{restricted_strong}, all that remains is an interpolation argument.
The precise statement we need is the following; in it, $(X, \mu)$ denotes
an arbitrary $\sigma$-finite measure space.
\begin{theorem} \label{interpolation}
  Let $T$ be a linear operator, acting on measurable functions on $X$,
  which is, a priori, bounded on $L^p(X, \mu)$ for all
  $2 \leq p < \infty$.
  Suppose that, for every measurable $U \subseteq X$ with $0 < \mu(U) < \infty$
  and every $q \geq 2$
  \begin{align} \label{interpolation:hypothesis}
    \|T(\ind_U)\|_{L^{q}} \leq Aq \mu(U)^{\frac{1}{q}}.
  \end{align}
  Suppose furthermore that we have the strong $L^2$ bound
  \begin{align} \label{interpolation:strongL2}
    \|Tf\|_{L^2} \leq B \|f\|_{L^2}.
  \end{align}
  Lastly, assume that $T$ satisfies the monotonicity property
  \begin{align*}
    0 \leq f \leq g \implies 0 \leq Tf \leq Tg.
  \end{align*}
  Then, for every $p \geq 3$
  \begin{align*}
    \|Tf\|_{L^p} \leq Cp\|f\|_{L^p},
  \end{align*}
  where the constant $C$ depends only on $A$ and $B$.
\end{theorem}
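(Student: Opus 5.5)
We prove Theorem \ref{interpolation} by a Stein--Weiss style layer decomposition of $f$ into level sets. We keep track of the $p$-dependence throughout, and at the critical point use the strong $L^2$ bound to absorb the logarithmic loss.

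By monotonicity, and since $|Tf| \le T|f|$ (split $f=f_+-f_-$ and use linearity together with monotonicity), we may assume $f \ge 0$. By homogeneity we normalize $\|f\|_{L^p}=1$. We decompose dyadically by height:
\begin{align*}
  f \le \sum_{j\in\mathbb{Z}} 2^{j+1}\ind_{U_j},
  \qquad U_j=\{2^j< f\le 2^{j+1}\}.
\end{align*}
Here $\sum_j 2^{jp}\mu(U_j)\asymp \|f\|_p^p=1$. Monotonicity and linearity give
\begin{align*}
  Tf \le \sum_j 2^{j+1} T\ind_{U_j}.
\end{align*}
The naive approach is the triangle inequality combined with \eqref{interpolation:hypothesis} at $q=p$. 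This yields $p\sum_j 2^j\mu(U_j)^{1/p}$, which is an $\ell^1$ sum of an $\ell^p$ sequence and so is not controlled. The real work is to recover $\ell^p$ summability.

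To do this, we estimate $\|Tf\|_p^p$ by testing against a dual function, or equivalently through distribution functions. Fix $\lambda=2^k$ and split $f = f_{\mathrm{lo}} + f_{\mathrm{hi}}$, where $f_{\mathrm{lo}}$ collects the layers with $j\le k-s$ and $f_{\mathrm{hi}}$ those with $j>k-s$, for a shift $s$ to be chosen. We estimate the two parts as follows.
\begin{enumerate}[label=(\roman*)]
  \item For the high part, use the $L^2$ bound \eqref{interpolation:strongL2} and Chebyshev: $\mu(Tf_{\mathrm{hi}}>\lambda/2)\lesssim B^2\lambda^{-2}\|f_{\mathrm{hi}}\|_2^2$. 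Summing over $k$ against $\lambda^{p}$ is fine, since $p>2$ makes the geometric sum in $k$ converge, with a factor of order $(p-2)^{-1}$ that is harmless for $p\ge3$. This gives $\lesssim B^2 p\,\|f\|_p^p$, well within the budget $(Cp)^p$.
  \item For the low part, use \eqref{interpolation:hypothesis} with a large exponent $q$, possibly chosen depending on $j$ and $k$, together with Chebyshev:
  \begin{align*}
    \mu(T\ind_{U_j}>t)\le \Big(\frac{Aq}{t}\Big)^q\mu(U_j).
  \end{align*}
  Choosing $q\approx t/(eA)$ converts this into exponential decay $e^{-ct}\mu(U_j)$. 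This is the restricted-type version of ``exponential integrability'', and it is the key gain.
\end{enumerate}

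For the low part we distribute the threshold $\lambda/2$ among the layers $j\le k-s$ as $\lambda 2^{-(k-j)/2}\cdot c$ (which sums to at most $\lambda/2$). Each layer then contributes
\begin{align*}
  \mu\big(T\ind_{U_j}>c\,2^{(k-j)/2}\big)\lesssim \exp\big(-c'2^{(k-j)/2}\big)\mu(U_j).
\end{align*}
Multiplying by $p\lambda^{p-1}$ and summing over $k\ge j+s$ gives a series of the form
\begin{align*}
  \sum_{m\ge s} 2^{mp}\exp\big(-c'2^{m/2}\big)\cdot 2^{jp}\mu(U_j).
\end{align*}
The sum over $m$ is maximized near $2^{m/2}\approx p$ and has size about $(Cp)^{2p}$, which is too big. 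So the split must be refined. We let the cutoff depend on $p$: the terms with $2^{m}\lesssim p^2$, that is $k-j\le 2\log_2 p+O(1)$, are assigned to the $L^2$ part. In that range the $L^2$ estimate costs only a factor $2^{2(k-j)}$ per layer, relative to a budget of $p^{p}$. For the far terms, with $2^{m/2}\gg p$, the exponential decay beats $2^{mp}$ and contributes $O(1)^p$.

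Balancing these two regimes so that the total is $(Cp)^p$ is the main obstacle. If the first attempt loses an extra factor, I would instead apply \eqref{interpolation:hypothesis} directly with $q=p$ to the block of layers with $k-j\in[2\log_2 p, \infty)$. In that form the loss is $p^p$ times a sum that is geometric in $j$, which is summable because the layers are weighted by $2^{j}\mu(U_j)^{1/p}$ and then Hölder is applied in $j$ over an effectively bounded range. Either way, the final constant depends only on $A$ and $B$.
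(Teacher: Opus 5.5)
Your outline has the right frame: split $f$ at a height proportional to $\lambda$, use the $L^2$ bound for the top and the restricted bound for the bottom. But the balancing step, which you yourself flag as the main obstacle, is never resolved, and each resolution you propose fails.

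\textbf{The cost of the $L^2$ part is misjudged.} A layer $U_j$ estimated by Chebyshev in $L^2$ at level $\lambda=2^k$, with $k=j+m$, contributes $\lambda^{p}\cdot\lambda^{-2}2^{2j}\mu(U_j)=2^{m(p-2)}\,2^{jp}\mu(U_j)$ to $\sum_k 2^{kp}\mu(Tf>2^k)$. Its relative cost is therefore $2^{m(p-2)}$, not $2^{2m}$. Summing over $m<s$ gives roughly $2^{s(p-2)}$, so your bound $B^2p\|f\|_p^p$ in (i) holds only for bounded $s$. The $L^2$ estimate can absorb only the layers with $2^{k-j}\lesssim p$. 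With your cutoff $2^s\approx p^2$ it costs about $p^{2p-4}$, far beyond $(Cp)^p$.

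\textbf{The low part with $2^s\approx p$ is still too large.} Your bound $\sum_{m\geq s}2^{mp}\exp(-c'2^{m/2})$ is of size $(Cp)^{2p}$, because its maximum, at $2^{m/2}\approx p$, lies inside the range $m\geq s$. Your thresholds $c\lambda 2^{-(k-j)/2}$ are normalized at $\lambda$ rather than at the top $2^{k-s}$ of the low block, which throws away a factor $2^{-s/2}$.

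\textbf{The fallback with $q=p$ is the endpoint case and cannot work.} Chebyshev in $L^p$ gives $\lambda^{-p}$, which cancels the weight $\lambda^{p}$ exactly. You are left with $\sum_k\|Tf_{\mathrm{lo}}\|_{L^p}^p$, where $\|Tf_{\mathrm{lo}}\|_{L^p}\leq 2Ap\sum_{j\leq k-s}2^{j+1}\mu(U_j)^{1/p}$ is exactly the $\ell^1$ sum you rejected at the start. There is no geometric factor in $j$, and the range $j\leq k-s$ is unbounded below. The resulting series in $k$ diverges, since its terms increase with $k$.

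\textbf{The missing idea} is to apply the restricted hypothesis at an exponent larger than $p$ by a fixed factor. After normalizing $A=B=1$, the paper proceeds as follows.
\begin{enumerate}[label=(\alph*)]
  \item It splits $f$ at height $\gamma\lambda$ with $\gamma=1/(16p)$, which is your $2^s\approx p$, so the $L^2$ part costs $(16p)^{p-2}/(p-2)$.
  \item For the low part $b$, monotonicity and the hypothesis with $q=2p$ give $\|Tb\|_{L^{2p}}\leq 8p\int_0^{\gamma\lambda}\mu(f>u)^{1/(2p)}\,du$.
  \item Chebyshev in $L^{2p}$ leaves the decaying weight $\lambda^{-p-1}$, and rescaling produces $(8p)^{2p}\gamma^{p}=(4p)^p$.
  \item Hardy's inequality with exponent $2p$ turns the Lorentz-type integral into $\|f\|_p^p$. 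Its constant is $4^p$, harmless because $q/(q-p)=2$.
\end{enumerate}

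Your layer scheme can be repaired the same way. Cut at $2^s\approx p$, take $q=2p$, and give the layer $j=k-m$, $m\geq s$, the threshold $\tau 2^{-(m-s)/4}\lambda$, with $\tau$ a small absolute constant so the thresholds sum to at most $\lambda/2$. Chebyshev in $L^{2p}$ then bounds that layer's contribution by $(4Ap/\tau)^{2p}\,2^{-sp}\,2^{-(m-s)p/2}\,2^{jp}\mu(U_j)$. Summing over $m\geq s$ gives $(Cp)^p\,2^{jp}\mu(U_j)$, and no exponential decay in $t$ is needed.
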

Qualitative versions of Theorem \ref{interpolation} are classical: it is
a variant of the interpolation theorem of Stein and Weiss for restricted
weak-type estimates \cite{SteinWeiss1959}; see
\cite[Corollary 1.4.22]{GrafakosClassical}, and the references therein,
for a statement closest to Theorem \ref{interpolation}.
The known statements, however, do not track how the constant depends on
$p$, and this dependence is critical for the proof of Theorem
\ref{sharp_Lp_for_A}.
Since the proof is short, we include the details in Appendix
\ref{app:interpolation}.

We now show that it already implies Theorem \ref{sharp_Lp_for_A}.
\begin{proof}[Proof of Theorem \ref{sharp_Lp_for_A}]

  Fix a $\frac{1}{2}$-sparse dyadic antichain of two-dimensional
  rectangles and set $T = \mathcal{A}_{\mathcal{E}}$.

  First, we note that $T$ is $L^p \to L^p$ bounded for all $1 < p < \infty$.
  By duality, take $g \geq 0$ with $\|g\|_{L^{p'}} = 1$.
  Then, by the $L^p$ norm bounds of the dyadic maximal function \eqref{doob}, we
  have
  \begin{align*}
    \int g\mathcal{A}_{\mathcal{E}}f &= \sum_{R \in \mathcal{E}} \langle f
    \rangle_R \langle g \rangle_R |R| \\
    &\leq 2 \int \mathcal{M}_{\mathcal{D}_2}f \, \mathcal{M}_{\mathcal{D}_2}g \\
    &\leq 2 \|f\|_{L^p} \|\mathcal{M}_{\mathcal{D}_2}\|_{L^p \to L^p}
    \|\mathcal{M}_{\mathcal{D}_2}\|_{L^{p'} \to L^{p'}} \\
    &\leq 2(pp')^2 \|f\|_{L^p}.
  \end{align*}

  That is, we have the \emph{suboptimal} a priori bound
  \begin{align} \label{suboptimal}
    \|\mathcal{A}_{\mathcal{E}}\|_{L^p \to L^p} \leq 2(pp')^2.
  \end{align}

  Next, notice that $\mathcal{A}_{\mathcal{E}}$ is self-adjoint, thus it
  suffices to prove \eqref{sharp_Lp_for_A:ineq} for $p \geq 2$.
  In fact, it suffices to treat $p \geq 3$ since, on the range
  $2 \leq p \leq 3$, the quantity $pp'$ is comparable to $1$, so the suboptimal
  bound \eqref{suboptimal} already gives \eqref{sharp_Lp_for_A:ineq} there.

  It remains to check that $T = \mathcal{A}_{\mathcal{E}}$ satisfies the
  hypotheses of Theorem \ref{interpolation}.
  The \emph{a priori} boundedness is \eqref{suboptimal}, and evaluating that
  bound at $p=2$ gives the strong $L^2$ estimate \eqref{interpolation:strongL2}.
  The restricted strong-type hypothesis \eqref{interpolation:hypothesis} is
  exactly Theorem \ref{restricted_strong}.
  Finally, $\mathcal{A}_{\mathcal{E}}$ is monotone since $0 \leq f \leq g$
  implies $0 \leq \langle f \rangle_R \leq \langle g \rangle_R$ for every $R$.
  Theorem \ref{interpolation} therefore applies, and yields
  \eqref{sharp_Lp_for_A:ineq}.
\end{proof}

\appendix

\section{Proof of the interpolation theorem} \label{app:interpolation}

In order to prove Theorem \ref{interpolation} we need the following Hardy
inequality. The reader can find this inequality, after changing variables
appropriately, in Theorem 330 of \cite{HLP1988}; we therefore omit its
proof.
\begin{proposition}[Hardy's inequality]
  Let $f \geq 0$ be a measurable function on $(0,\infty)$ and $p \geq 1$. Then
  \begin{align} \label{hardy:ineq}
    \int_0^\infty \bigg(
      \int_0^u f(s) \, ds
    \bigg)^{2p}u^{-p-1}\,du
    \leq4^p\int_0^\infty f(u)^{2p}\,u^{p-1}\,du.
  \end{align}
\end{proposition}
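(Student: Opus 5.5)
The plan is to reduce \eqref{hardy:ineq} to the classical Hardy inequality with exponent $q = 2p \geq 2$ by a power change of variables, and then to check the constant. Write $F(u) = \int_0^u f(s)\,ds$. The left-hand side is $\int_0^\infty F(u)^{q} u^{-r-1}\,du$ with $q = 2p$ and $r = p$. This is the standard weighted form of Hardy's inequality:
\begin{align*}
  \int_0^\infty F(u)^q u^{-r-1}\,du \leq \Big(\frac{q}{r}\Big)^q \int_0^\infty (u f(u))^q u^{-r-1}\,du ,
\end{align*}
valid for $q \geq 1$ and $r > 0$. Substituting $q=2p$ and $r=p$ gives the constant $(q/r)^q = 2^{2p} = 4^p$ and the weight $u^{2p-p-1} = u^{p-1}$ on the right. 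This is exactly \eqref{hardy:ineq}.

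To prove the weighted form directly, rather than quoting Theorem 330 of \cite{HLP1988}, I would argue as follows.

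\textbf{Reduction.} First reduce to $f$ bounded with compact support in $(0,\infty)$, so that all quantities are finite. The general case then follows by monotone convergence.

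\textbf{Integration by parts.} Integrate by parts using $\frac{d}{du}\big(-u^{-r}/r\big) = u^{-r-1}$:
\begin{align*}
  \int_0^\infty F^q u^{-r-1}\,du = \frac{q}{r}\int_0^\infty F^{q-1} f\, u^{-r}\,du .
\end{align*}
The boundary terms vanish. At $0$ we have $F(u) \lesssim u$ while $q > r$; at infinity $F$ is bounded and $r > 0$.

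\textbf{H\"older.} Write $F^{q-1} f u^{-r} = \big(F^{q-1} u^{-(r+1)(q-1)/q}\big)\big(f u^{1-(r+1)/q}\big)$ and apply H\"older with exponents $q/(q-1)$ and $q$. This gives
\begin{align*}
  I \leq \frac{q}{r}\, I^{1-1/q} \Big(\int (uf)^q u^{-r-1}\Big)^{1/q}, \qquad I := \int_0^\infty F^q u^{-r-1}\,du .
\end{align*}
Since $I$ is finite by the reduction, we may divide by $I^{1-1/q}$ to conclude.

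\textbf{Main obstacle.} The only delicate point is the finiteness and boundary bookkeeping, which is handled by the reduction to nice $f$. The H\"older exponent matching should be verified once: the exponents combine to $u^{-r}$ because $(r+1)(q-1)/q + (r+1)/q - 1 = r$.
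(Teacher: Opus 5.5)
Your proposal is correct and follows the paper's route: the paper omits the proof and cites Theorem 330 of \cite{HLP1988}, which is exactly your weighted inequality in its normalization (exponent $2p$, parameter $r=p+1$; your $r=p$), and your truncation, integration by parts and H\"older argument is the standard proof of that theorem. Two small nits: no power change of variables is actually needed, and for $f$ supported away from $0$ the boundary term at $0$ vanishes simply because $F\equiv 0$ near $0$, so the condition $q>r$ plays no role there.
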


We are now ready to prove the interpolation theorem.
\begin{proof}[Proof of Theorem \ref{interpolation}]
  We can assume without loss of generality that $f \geq 0$ and that
  $A = B =  1$.
  By the layer-cake formula
  \begin{align*}
    \|Tf\|_{L^p}^p &= p \int_0^\infty |Tf > \lambda| \lambda^{p-1} \, d\lambda\\
    &= 2^p p \int_0^\infty |Tf > 2\lambda| \lambda^{p-1} \, d\lambda.
  \end{align*}
  For every $\lambda > 0$ split $f = a + b$, where
  $a = f \ind_{\{f > \gamma \lambda\}}$ and $\gamma = \frac{1}{16p}$.
  Then
  \begin{align*}
    (2^p p)^{-1}\|Tf\|_{L^p}^p &\leq
    \underbrace{\int_0^\infty |Ta > \lambda| \lambda^{p-1} \, d\lambda}_{
      \text{I}} +
    \underbrace{\int_0^\infty |Tb > \lambda| \lambda^{p-1} \, d\lambda}_{
      \text{II}}.
  \end{align*}

  We treat I and II separately. For I we use the strong $L^2$ bound
  \eqref{interpolation:strongL2}:
  \begin{align*}
    \text{I} &\leq \int_0^\infty
      \lambda^{-2} \bigg( \int a^2 \bigg) \lambda^{p-1} \, d\lambda \\
    &= \int_X f(x)^2 \int_0^{16pf(x)} \lambda^{p-3} \, d\lambda \, d\mu(x) \\
    &= \frac{(16p)^{p-2}}{p-2} \int_X f^{p} \, d\mu \\
    &= \frac{(16p)^{p-2}}{p-2} \|f\|_{L^p}^p \\
    &\leq (16p)^p \|f\|_{L^p}^p.
  \end{align*}

  To handle II, first notice that, for any measurable $g \geq 0$ we have
  \begin{align*}
    g = \sum_{m \in \mathbb{Z}} g \ind_{2^m < g \leq 2^{m+1}}
  \end{align*}
  thus, by the monotonicity property and the (qualitative) boundedness of $T$
  \begin{align*}
    Tg &\leq 2 \sum_{m \in \mathbb{Z}} 2^m T(\ind_{g > 2^m}).
  \end{align*}
  Now, by the restricted strong-type \eqref{interpolation:hypothesis}:
  \begin{align*}
    \|Tg\|_{L^p} &\leq 2p \sum_{m \in \mathbb{Z}} 2^m |g > 2^m|^{\frac{1}{p}} \\
    &\leq 4p \int_0^\infty |g > s|^{\frac{1}{p}} \, ds.
  \end{align*}
  We can use this inequality with exponent $2p$ and with $g = b$ to bound II:
  \begin{align*}
    \text{II} &= \int_0^\infty |Tb > \lambda| \lambda^{p-1} \, d\lambda \\
    &\leq \int_0^\infty \|Tb\|_{L^{2p}}^{2p} \lambda^{-p-1} \, d\lambda \\
    &\leq (8p)^{2p} \int_0^\infty \bigg(
      \int_0^\infty |b > s|^{\frac{1}{2p}} \, ds
    \bigg)^{2p}
    \lambda^{-p-1}d\lambda \\
    &\leq (8p)^{2p} \int_0^\infty \bigg(
      \int_0^{\gamma \lambda} |f > s|^{\frac{1}{2p}} \, ds
    \bigg)^{2p}
    \lambda^{-p-1}d\lambda \\
    &= (8p)^{2p} \gamma^p \int_0^\infty \bigg(
      \int_0^{\lambda} |f > s|^{\frac{1}{2p}} \, ds
    \bigg)^{2p}
    \lambda^{-p-1}d\lambda \\
    &= (4p)^p \int_0^\infty \bigg(
      \int_0^{\lambda} |f > s|^{\frac{1}{2p}} \, ds
    \bigg)^{2p}
    \lambda^{-p-1}d\lambda
    =: \circledast.
  \end{align*}
  By Hardy's inequality \eqref{hardy:ineq}
  \begin{align*}
    \circledast &\leq (16p)^p \int_0^\infty
    |f > \lambda| \lambda^{p-1} \, d\lambda \\
    &\leq (16p)^p \|f\|_{L^p}^p.
  \end{align*}

  Putting everything together:

  \begin{align*}
    \|Tf\|_{L^p} &\leq (2^p p)^{\frac{1}{p}} \big(2(16p)^p\big)^{\frac{1}{p}}
    \|f\|_{L^p} \\
    &\leq Cp \|f\|_{L^p}.
  \end{align*}
\end{proof}

\bibliography{zygmund}
\bibliographystyle{abbrv}

\end{document}